\documentclass{article}

\usepackage{PRIMEarxiv}
\usepackage[utf8]{inputenc} 
\usepackage[T1]{fontenc}
\usepackage{url}
\usepackage{booktabs}        
\usepackage{nicefrac}       
\usepackage{lipsum}
\usepackage{fancyhdr}       
\usepackage{graphicx}       
\graphicspath{{media/}} 
\RequirePackage{amsthm,amsmath,amsfonts,amssymb}
\usepackage{dsfont}
\usepackage{newtxtext}
\usepackage{enumitem}
\usepackage{anyfontsize}
\usepackage{array}
\RequirePackage[numbers,sort&compress]{natbib}
\RequirePackage[colorlinks,citecolor=blue,urlcolor=blue,linkcolor=blue]{hyperref}
\usepackage{xcolor}

\theoremstyle{plain}
\newtheorem{theorem}{Theorem}

\newtheorem{corollary}{Corollary}
\newtheorem{lemma}{Lemma}

\theoremstyle{definition}
\newtheorem{assumption}{Assumption}

\newtheorem{remark}{Remark}

\def\R{\mathbb{R}}

\def\N{\mathbb{N}}

\newcommand{\Ex}{\mathbb{E}} 
\renewcommand{\Pr}{\mathbb{P}}
\newcommand{\Var}{\mathbb{V}}

\DeclareMathOperator*{\argmin}{arg\,min}

\title{Debiasing the Lasso under Weaker Tail Assumptions}

\author{
  Leonardo Voltarelli\\
  IMPA\\
  \texttt{leonardo.voltarelli@impa.br} \\
\And
  Roberto Imbuzeiro Oliveira\\
  IMPA\\
  \texttt{rimfo@impa.br} \\
}

\begin{document}
\maketitle

\begin{abstract}
We consider the problem of high-dimensional inference with the lasso estimator. Different methods including `double selection' techniques and multiple versions of the `debiased lasso' have been proposed for this task with noticeable success. However, most guarantees assume strong hypotheses on the underlying data process and the errors in the linear regression model, such as subgaussian designs and independence between errors and the data itself. We show that `standardizing' one's dataset -- a natural procedure in practical penalized regression -- leads to the same results under much weaker hypotheses, paying only a small price for not assuming light tails. The key technical point allowed by this step is exploiting the concentration properties of self-normalized processes. Importantly, we prove our results for two different methods closely related to the `debiased lasso'. The second method performs valid inference even for a misspecified linear model, under mild sparsity conditions similar to the `double selection' literature.
\end{abstract}

\keywords{Self-normalization \and High-dimension \and Inference \and Regression}

\section{Introduction} Many modern statistical problems are concerned with complex data in which the number of variables greatly exceeds that of available samples, with famous examples including genomics \cite{Peng2010-lw,buhlmann2017high,Melsen2025-fg}, machine learning \cite{Clemmensen01112011,GENUER20102225,Chen2022-hu,Fan2021-dy} and economics \cite{10.1093/restud/rdt044,https://doi.org/10.3982/ECTA9626,10.1257/jep.28.2.29}. Research on regression methods in this high-dimensional setup has been especially fruitful, with the lasso regression \cite{10.1111/j.2517-6161.1996.tb02080.x} posing as one of the most widely recognized methods in recent years due to its success in parameter estimation and variable selection.

However the bias introduced by the lasso penalty severely hinders valid statistical inference. Starting with the works of Zhang \& Zhang \cite{10.1111/rssb.12026}, Van de Geer et al. \cite{f29b4853-f902-378f-99ba-4ec108f6737e} and Javanmard \& Montanari \cite{JMLR:v15:javanmard14a}, different methods have been proposed to address this challenge by correcting the bias term, leading to the `debiased lasso'. Since then, many other procedures for high-dimensional inference have been successfully developed. Some examples on the lasso include \cite{10.1214/20-EJS1713, 10.3150/21-BEJ1348,10.1214/17-AOS1630}, but other models such as Generalized Linear Models \cite{f29b4853-f902-378f-99ba-4ec108f6737e, 10.5555/3648699.3648995, 10.1111/biom.13587}, `double-selection' methods and linear instrumental variables models \cite{10.1257/jep.28.2.29,10.1093/restud/rdt044,https://doi.org/10.3982/ECTA9626}, Gaussian Graphical Models \cite{10.1214/15-EJS1031,Meng2025-pm} and general penalized M-estimators have also been studied \cite{10.1214/16-AOS1448}.

Notably, the vast majority of such works assume relatively strong conditions on the data samples and regression errors. This is the case for the foundational works of Zhang \& Zhang \cite{10.1111/rssb.12026}, Van de Geer et al. \cite{f29b4853-f902-378f-99ba-4ec108f6737e} and Javanmard \& Montanari \cite{JMLR:v15:javanmard14a} on the `debiased lasso' and Belloni et al. on `double selection' \cite{10.1257/jep.28.2.29,10.1093/restud/rdt044}, among many others. For instance, \cite{f29b4853-f902-378f-99ba-4ec108f6737e} and \cite{JMLR:v15:javanmard14a} assume that the observations follow a sub-gaussian distribution with invertible covariance matrix and that errors are independent. \cite{10.1214/16-AOS1448} allows for strong dependency between data samples and regression errors in some cases, but still requires sub-gaussian or even bounded data assumptions. \cite{https://doi.org/10.3982/ECTA9626} removes homoskedasticity assumptions by adding a matrix of loadings on the lasso penalty term, but implicitly assumes sub-gaussian/bounded designs to ensure the required restricted eigenvalue properties for the lasso consistency. In practice, data can be much more heavy-tailed and errors are not always independent from the data, which calls for more robust techniques.

In the present work, we study two methods for the debiased lasso. The first one is based on \cite{JMLR:v15:javanmard14a}, for which we show that the usual assumptions on the data samples can be severely relaxed and the same results can be recovered in more generality, without much additional effort. The second method is a variant of the approach proposed by \cite{f29b4853-f902-378f-99ba-4ec108f6737e} where debiasing is directly formulated as a lasso-type problem without additional steps. However, our assumptions on the data distribution are significantly weaker. In particular, we allow for model misspecification and heteroskedastic data, in the spirit of Belloni et al \cite{10.1093/restud/rdt044}. In both cases, the moment conditions we require are significantly weaker than in related work. The price to pay is a slightly stronger sparsity requirement that depends on which moments of the data and error distributions are assumed to be bounded. This price gets smaller the more one allows for higher-order bounded moments.

The key point in our analysis is to assume that, prior to computing the regression coefficients, the practitioner has `standardized' the data. This procedure is ubiquitous in applications, in particular when dealing with a penalized setting, which often assumes an equal scale for the regression variables. This leads to a very favorable setup for exploiting self-normalized concentration inequalities, which is the primary technical tool used to obtain our results. Our main contributions are as follows:
\begin{itemize}
    \item We show that inference in high-dimensional regression with the lasso can be performed without sub-gaussian tails or independent error assumptions by explicitly proving an asymptotic distribution of the debiased estimator.
    \item We formally demonstrate benefits of `standardizing' data in a regression setting as usually done in practical applications, but often without theoretical justification. Notably, standardization brings us into the domain of self-normalized concentration inequalities \cite{Pena2009-av}, which hold under weak assumptions.
    \item We obtain novel theoretical guarantees for debiasing methods that nearly match the best available guarantees \cite{10.1257/jep.28.2.29,10.1093/restud/rdt044} in terms of sparsity, model misspecification and heteroskedasticity, while allowing for significantly more general data distributions. The price to pay is controlled by finite moment conditions on the data; the more moments one assumes, the smaller the price.
\end{itemize}

The remainder of this paper is organized as follows. The next section, Section \ref{sec:notation}, highlights the basic notation and definitions used throughout the text. Section \ref{sec:debiased} discusses the debiased lasso estimator in more detail. Section \ref{sec:assumptions} provides relevant examples and explanations regarding the required assumption. Our main result is presented in Section \ref{sec:results}; additional consequences of this result for high-dimensional inference are presented in the same section. Section \ref{sec:experiments} contains numerical experiments. Proofs are collected in Appendix \ref{appendix:A}.

\section{Definitions and notation}\label{sec:notation}
\subsection{Basic notation}$\N$ represents the set of natural numbers. Given $n \in \mathbb{N}$, we set $[n] = \left\{ 1, \dots, n \right\}$. We write $a \vee b = \max\{a,b\}$. Also, given a vector $v \in \R^n$, we denote its $i$-th coordinate by $v_i$, for each $i \in [n]$. We also write $\|v\|_p$, $1 \leq p \leq \infty$, for the $\ell^p$ norm of $v$. The `$\ell^0$ norm' of a vector $v$ is denoted by:
\begin{equation*}
    \|v\|_0 = \sum_{i=1}^{n} \mathbf{1}_{\left\{v_i \neq 0\right\}} = |S(v)| \, ,
\end{equation*}
where $\mathbf{1}_A$ is used to represent the indicator function of a set and $S(v)$ is the support of $v$. Throughout the text, $\mathds{1}_n$ represents the column vector in $\R^n$ with all entries equal to 1. Also, $e_i$, $i \in [n]$, is used for the $i$-th canonical basis vector of $\R^n$, for some $n$ which is often implicit. Given $V \in \R^{n \times p}$, $i \in [n]$, and $j \in [p]$, $V_{ij}$ will be used to represent the entries of $V$. When $V$ is a square matrix we use $\lambda_{min}(V)$ and $\lambda_{max}(V)$ to represent its smallest and largest eigenvalues, respectively. Lastly, $I_{n \times n}  \in  \R^{n \times n}$ is reserved for the identity matrix.

Given a sequence $\left\{ a_n\right\}_{n\geq 1}$, we use the following asymptotic notation: $a_n = o(1)$ means that $a_n \rightarrow 0$; $a_n = O(1)$ is used when the sequence is bounded by a constant that does not depend on $n$. We also employ the probabilistic equivalents $o_p(1)$ and  $O_p(1)$ to say that a sequence of random variables converges to zero in probability or is bounded in probability.

\subsection{Data standardization}  We let $\left\{ (X_i,Y_i)\right\}_{i=1}^{n}$ be a sample of independent and identically distributed observations, with each $X_i \in \R^p$ and $Y_i \in \R$. We denote $X \in \R^{n \times p}$ the data matrix $X = (X_1, \dots,X_n)^T$ and set $Y = (Y_1,Y_2,\dots,Y_n)^T \in \R^n$. Then, define, for every $j \in [p]$:
\begin{equation*}
    \mu_j = \Ex\left[X_{1j}\right] \;, \; \; \hat{\mu}_j = \frac{1}{n} \sum_{i=1}^{n} X_{ij} \; ,
\end{equation*}
\begin{equation*}
    \sigma^2_j = \Var\left[X_{1j}\right] \; , \; \; \hat{\sigma}^2_j = \frac{1}{n} \sum_{i=1}^{n} (X_{ij} - \hat{\mu}_j)^2 \; .
\end{equation*}
The key point in our analysis is to work with a `standardized' version of the matrix $X$, in which we subtract the empirical mean $\hat{\mu}_j$ and divide by the empirical standard deviation $\hat{\sigma}_j$ in each column. More explicitly, we let $\tilde{X}$ be the matrix of entries:
\begin{equation}
\label{eq:tilde_x}
\tilde{X}_{ij} = 
\begin{cases}
\frac{X_{ij}-\hat{\mu}_j}{\hat{\sigma}_j} & \mathrm{if} \; \hat{\sigma}_j > 0\\
0 & \mathrm{if} \; \hat{\sigma}_j = 0
\end{cases}
 \; , \; \; \;  \mathrm{for} \;  i \in [n]  \; ,  \; j \in [p] \; ,
\end{equation}
and also define the `standardized' version with the population mean and standard deviation in place of their sample counterparts:
\begin{equation*}
\bar{X}_{ij} = 
\begin{cases}
\frac{X_{ij}-\mu_j}{{\sigma}_j} & \mathrm{if} \; {\sigma}_j > 0\\
0 & \mathrm{if} \; {\sigma}_j = 0
\end{cases}
 \; , \; \; \;  \mathrm{for} \;  i \in [n]  \; ,  \; j \in [p] \; .
\end{equation*}
Although we separate the cases in which the denominators above are possibly zero, our assumptions below guarantee this is almost never the case.
It will also be useful to think of $\tilde{X}$ in terms of matrix operations. We write:
\begin{equation*}
    \tilde{X} = A_n X D_X^{-1/2} \; ,
\end{equation*}
where:
\begin{equation*}
    A_n = I_{n \times n}-\frac{1}{n} \mathds{1}_{n}\mathds{1}_{n}^T \;  , \; D_{X} = \mathrm{diag}\left(\hat{\sigma}_1^2, \dots, \hat{\sigma}_p^2 \right)  \; .
\end{equation*}
This transformation is standard practice when using penalized regression models, as often the penalty term deals with regression variables as if they were in the same scale. We also define:
\begin{equation*}
    \hat{\Sigma}_n = \frac{1}{n} X^TX \;  , \; \; {\Sigma} = \frac{1}{n}\ \mathbb{E}{X}^T{X}\; , \; \; \tilde{\Sigma}_n = \frac{1}{n} \tilde{X}^T\tilde{X}\;  , \; \; \bar{\Sigma} = \frac{1}{n}\mathbb{E}\bar{X}^T\bar{X} \; ,
\end{equation*}
in such way that $\hat{\Sigma}_n$ and ${\Sigma}$ are the sample and population design matrices, respectively. $\tilde{\Sigma}_n $ is the sample correlation matrix of the covariates, while $\bar{\Sigma}$ is its population equivalent.
This specific standardization procedure leads to self-normalization of the matrix data, which lays the groundwork for exploiting concentration inequalities which do not assume sub-gaussian or sub-exponential tails but only finite moments (see chapter 12 in \cite{10.1093/acprof:oso/9780199535255.001.0001}, for example).
\section{Debiased lasso}\label{sec:debiased}
We consider the linear model:
\begin{equation}
\label{eq:linear_model}
    Y = X \beta^0 + \gamma^0 \mathds{1}_n + \rho + \epsilon \; ,
\end{equation}
where $\beta^0 \in \mathbb{R}^p$ and $\gamma^0 \in \R$ are fixed unknowns, $\rho \in \R^n$ is an approximation error and $\epsilon \in \mathbb{R}^n$ represents a random error term. For now, we highlight that this allows for a misspecified linear model; the difference between $\rho$ and $\epsilon$ is made clearer in the assumptions below. For simplicity, we  assume $\left\{(X_i,Y_i)\right\}_{i=1}^{n}$ are independent and identically distributed random variables. We also assume that $(\gamma^0,\beta^0)$ is the best linear approximation of $\Ex [Y|X]$ in the following sense:
\begin{equation*}
    (\gamma^0,\beta^0) \in \argmin_{\gamma \in \R, \beta \in \mathbb{R}^p} \Ex\left[\left(\Ex[Y_1|X_1]- \langle {X}_1,\beta\rangle - \gamma \right)^{2}\right]  \; ,
\end{equation*} 
which imply, by the optimality conditions, the following normal equations for $\rho$:
\begin{equation}
\label{eq:normal_equations}
\Ex\left[ \rho_1 \right]=0 \; , \; \; \Ex \left[\rho_1 X_{1k} \right]=0 \; , \; \; \forall k \in [p] \; .
\end{equation}
The lasso estimator is given by:
\begin{equation}
\label{eq:lasso}
    (\hat{\gamma} , \hat{\beta}) \in \argmin_{\gamma \in \R, \beta \in \mathbb{R}^p} \frac{1}{2n} \|Y- \tilde{X}\beta - \gamma \mathds{1}_n\|_{2}^{2} + \lambda \|\beta\|_1 \; ,
\end{equation}
where $\lambda > 0$ is the penalty parameter and $\tilde{X}$ is given by equation \eqref{eq:tilde_x}. This is a convex optimization problem, for which KKT conditions imply, with $\hat{g}$ a subgradient of $\|\cdot\|_1$  at $\hat{\beta}$:
\begin{equation*}
    \begin{cases} 
   -\frac{\mathds{1}^{T}_{n}}{n}\left(Y - \tilde{X}\hat{\beta} - \hat{\gamma} \mathds{1}_n\right) = 0 \Rightarrow \hat{\gamma} = \frac{\mathds{1}^{T}_{n}}{n}Y
   \\
   \begin{aligned}
   -\frac{\tilde{X}^T}{n}\left(Y - \tilde{X}\hat{\beta} - \hat{\gamma} \mathds{1}_n\right) + \lambda \hat{g} = 0 \Rightarrow \tilde{\Sigma}_n \left(\hat{\beta} - D_{X}^{1/2} \beta^0\right) + \lambda \hat{g} = \frac{1}{n}\tilde{X}^{T}(\epsilon + \rho) \; ,
   \end{aligned}
\end{cases}
\end{equation*}
with the simple observations that $\mathds{1}_n^T\tilde{X}=0$ and $\frac{1}{n}\tilde{X}^TX = \tilde{\Sigma}_n D_X^{1/2}$. If the matrix $\tilde{\Sigma}_n$ was invertible, we could easily isolate the difference $\hat{\beta} - D_X^{1/2}\beta^0$ and proceed with the analysis.

However, this is never the case in the $p > n$ setting. The idea is then to introduce a matrix $M \in \R^{p \times p}$ that will serve as an approximate inverse. We propose two methods for computing $M$, the first method being based on Theorem 21 in \cite{JMLR:v15:javanmard14a}, while the second is closely related to the estimator in \cite{f29b4853-f902-378f-99ba-4ec108f6737e}.\\

\textit{Method 1:} Define $M= (m^{(1)}, \dots,m^{(p)})^T$ with the row vectors $(m^{(j)})^T$ given by the optimization problem below, for each $j \in [p]$:
\begin{eqnarray}
\label{eq:opt1}
m^{(j)} \in \argmin_{m \in \mathbb{R}^p} \quad & m^T \tilde{\Sigma}_n m \\
\mathrm{s.t.} \quad & \|e_j - \tilde{\Sigma}_n m\|_{\infty} \leq \mu \; \; \; ,\nonumber\\
\quad & \|\tilde{X}m\|_{\infty} \leq \zeta \nonumber
\end{eqnarray}
where $\mu, \zeta$ are positive parameters chosen a priori. If solving for $m^{(j)}$ gives an infeasible problem, set $e_j$ as its solution. Note that, as we work with standardized data, we replace $\hat{\Sigma}_n$ by $\tilde{\Sigma}_n$ in the original formulation by \cite{JMLR:v15:javanmard14a}.\\

\textit{Method 2:} Define $M= (m^{(1)}, \dots,m^{(p)})^T$ with the row vectors $(m^{(j)})^T$ given by the optimization problem below, for each $j \in [p]$:
\begin{eqnarray}
\label{eq:opt2}
m^{(j)} \in \argmin_{m \in \mathbb{R}^p} \quad & \frac{1}{2}m^T \tilde{\Sigma}_n m - e_j^T m + \lambda_M \|m\|_1 
\end{eqnarray}
where $\lambda_M$ is a positive parameter chosen a priori. If no solution exists, set $m^{(j)} = e_j$. Although it might not be obvious at first sight, this method is closely related to the one proposed in \cite{f29b4853-f902-378f-99ba-4ec108f6737e}. The relationship between them is made explicit in Lemma \ref{lemma:nodewise_lasso}. In short, it can be shown that the estimator in \cite{f29b4853-f902-378f-99ba-4ec108f6737e} is indeed a minimizer of the problem above once we remove the regularization on the $j$-th coordinate and replace $\tilde{\Sigma}_n$ by $\hat{\Sigma}_n$.\\

For both methods, we then define the following estimator:
\begin{equation}
\label{eq:debiased}
    \hat{\beta}^u_j = \hat{\beta}_j + \lambda (m^{(j)})^T\hat{g} = \hat{\beta}_j + \frac{1}{n} (m^{(j)})^T\tilde{X}^T\left(Y-\tilde{X}\hat{\beta} - \hat{\gamma}\mathds{1}_n\right)\; , \; \; j \in [p] \; .
\end{equation}
It should be clear by the above equation that adding a new term that is proportional to a subgradient of the $\ell^1$ norm compensates the bias introduced by this penalty. Also, the above equation shows that if we are interested in estimating only the $j$-th coordinate of $\beta^0$, it is only necessary to compute $m^{(j)}$ using either one of the above methods. Then, from the KKT conditions for the lasso estimator in \eqref{eq:lasso} given above:
\begin{equation}
\label{eq:basic_eq}
    \sqrt{n} \left(\hat{\beta}^u_j-\hat{\sigma}_j\beta^0_j\right) = \underbrace{\frac{1}{\sqrt{n}} (m^{(j)})^T \tilde{X}^T (\epsilon+\rho)}_{(Z_j)} + \underbrace{\sqrt{n}(e_j^T-(m^{(j)})^T\tilde{\Sigma}_{n})(\hat{\beta}-D_{X}^{1/2}\beta^0)}_{(\Delta_j)} \; .
\end{equation}
We will show that, under some hypotheses (which differ depending on the chosen method), for the `debiased lasso' $\hat{\beta}^u_j/\hat{\sigma}_j$:
\begin{equation}
\label{eq:convergence}
     \sqrt{n} \left(\hat{\beta}^u_j/\hat{\sigma}_j-\beta^0_j\right) \xrightarrow{d} \mathcal{N}(0, \Ex[\langle{X}_1-\Ex X_1,\theta^{(j)}\rangle^2\epsilon_1^2]) \; ,
\end{equation}
where $\theta^{(j)}$ satisfies $\bar{\Sigma}\theta^{(j)} = e_j$. This will be done by first ensuring that the $\Delta_j$ term in equation \eqref{eq:basic_eq} vanishes in probability. The whole convergence will follow from establishing the desired gaussian limit for the $Z_j$ term. Construction of confidence interval and hypothesis tests are given as a direct consequence of this asymptotic result.

Our main assumptions, which we highlight beforehand, are related to sparsity. For both methods we need to assume sparsity of the regression vector $\beta^0$:
\begin{equation*}
    |S(\beta^0)|= o\left(\frac{n^{1/2-1/q-1/r}}{\log p}\right) \; ,
\end{equation*}
where $q$ and $r$ are related to moment conditions and are the price we pay for not assuming subgaussian designs.

We also require some level of sparsity of the vector $\theta^{(j)}$ in equation \eqref{eq:convergence} above, which is the population counterpart of the $j$-th column of the matrix $M$, computed with either Method 1 or Method 2. Specifically, we require:
\begin{equation*}
\begin{gathered}
    \|\theta^{(j)}\|_1 = o\left(\frac{n^{1/2-1/r}}{\sqrt{\log p}}\right) \mathrm{for \; Method \; 1} , \\
    s_\theta := \|\theta^{(j)}\|_0 = o\left(\frac{n^{1/2-1/r-1/q}}{{\log p}}\right) \mathrm{for \; Method \; 2} .
\end{gathered}
\end{equation*}
The difference between methods is mainly due to the presence of the approximation error $\rho$ in equation \eqref{eq:linear_model} for Method 2, which we will assume to be absent in Method 1. Clearly, the inclusion of this error term calls for more restrictive hypotheses. A thorough discussion of the requirements above and the other necessary assumptions is given in the next section.

\section{Assumptions}\label{sec:assumptions}

In the discussion below and the results to come, we implicitly assume the following setup: we consider a sequence of experiments in which $\{(X^{(n)}_i,Y_i^{(n)})\}_{i=1}^{n}$ is an i.i.d. sample for each $n \in \N$. Also $p = p(n) \gg n$ and $\beta^0 \in \R^{p(n)}$ with $s = |S(\beta^0)| = s(n)$. We will be explicit about parameters that are not allowed to depend on $n$, which we will also call absolute constants. These parameters will be listed in parentheses in the beginning of each assumption, so that ``{\sc Assumption x}~($a,b,c,d$)''~means that $a,b,c,d$ are the absolute constants involved in Assumption x. Any other parameters in our assumptions may in principle depend on $n$. The first set of hypotheses is concerned with the errors $\epsilon$ and $\rho$. 

\begin{assumption}[$\sigma_\epsilon$, $q'$, $M_{q'}$, $\sigma_{\rho}$]
\label{cond1}
Assume $\left\{(X_i,\epsilon_i,\rho_i)\right\}_{i=1}^{n}$, as given in equation \eqref{eq:linear_model}, are independent and identically distributed. Furthermore, assume that the following hold (almost surely): (i) For every $ i \in [n]$, $\; \Ex[\epsilon_i|X_i]=0$; (ii) $\max_{i \in [n]}\mathbb{V} [\epsilon_i|X_i] \leq \sigma_\epsilon^2 $ for some absolute constant $\sigma^2_{\epsilon} > 0$; (iii) $ \; \max_{i \in [n]} \Ex[|\epsilon_i|^{q'}|X_i] \leq M_{q'}^{q'} $ for some ${q'}>2$ and $M_{q'}>0$ both independent of $n$; (iv) For Method 1, assume $\rho = 0$ whereas for Method 2 take $\Ex \rho_i^2 \leq \sigma_\rho^2s_{\theta}/n$ for every $i \in [n]$ with $\sigma_\rho^2 > 0$ some absolute constant, where $s_{\theta}\leq n$.
\end{assumption}

These conditions are more general than the traditional lasso inference setup in the sense that they allow for dependency between $X$ and $\epsilon$. For example, \cite{10.1111/rssb.12026, JMLR:v15:javanmard14a,f29b4853-f902-378f-99ba-4ec108f6737e,10.1214/20-EJS1713,10.1214/17-AOS1630,10.3150/21-BEJ1348} all assume $\epsilon$ to be independent; in some cases, it is also assumed to follow a gaussian or sub-gaussian distribution, which we do not impose. Importantly, these works do not account for the presence of approximation errors in the linear model as in equation \eqref{eq:linear_model}. \cite{JMLR:v15:javanmard14a} does not assume identically distributed observations, but still require independent errors and independence of the rows of the sub-gaussian design matrix. On the other hand, \cite{10.1214/16-AOS1448,https://doi.org/10.3982/ECTA9626,10.1257/jep.28.2.29,10.1093/restud/rdt044} all allow for weaker dependence requirements and non-identically distributed observations, but still implicitly impose bounded or sub-gaussian design assumptions, which are not present in this work. In particular, \cite{https://doi.org/10.3982/ECTA9626,10.1257/jep.28.2.29} rely on an Instrumental Variable model with `double selection' to achieve such weak conditions. \cite{10.1093/restud/rdt044} also applies a `double selection' procedure, using the lasso estimator twice followed by a least squares estimator. The existence of an approximation error term is considered in these latter works, under hypotheses that matches the one given in point (iv) of Assumption \ref{cond1} for Method 2.

We highlight that the assumption of i.i.d. samples above greatly simplify our proofs, allowing for us to focus on concentration aspects of its self-normalized data hypothesis.
When this assumption is not present, the works mentioned above employ intricate proofs based on Lindeberg-Feller's Central Limit Theorem. Ours, on the other hand, are based on the classical Central Limit Theorem. Lastly, we point out that assumption (iii) is used mainly as a natural way to ensure lasso consistency in this dependent setup without the use of strong concentration inequalities for $\epsilon$.\\

The second set of assumptions is related to fast rates for the consistency of the base lasso estimator. Many different conditions have been proposed to ensure such rates, some famous examples including coherence \cite{10.1109/18.959265,10.1109/TIT.2010.2048506}, sparse eigenvalues \cite{https://doi.org/10.3982/ECTA9626}, uniform uncertainty principles \cite{10.1109/TIT.2006.885507,10.1214/009053606000001523} and restricted eigenvalues \cite{10.1214/08-AOS620}. We focus on this last property, which is among the least restrictive of the ones listed. We refer to \cite{10.1214/09-EJS506} for a comparative study which highlights this feature of the restricted eigenvalues assumption. Define:
\begin{equation}
\label{eq:def_cone}
    \Delta_{C,S} = \left\{ v  \in \mathbb{R}^p : \|v_{S^c}\|_1 \leq C\|v_{S}\|_1 \right\} \; .
\end{equation}
Then, the restricted eigenvalue of the triple $(A,S,C)$ is given by:
\begin{equation}
\label{eq:def_re}
    \mathrm{re}(A,S,C) = \max \left\{ R>0 : \forall v  \in \Delta_{C,S} , R^2 \|v_{S}\|_{2}^{2} \leq  v^T A v\right\} \; .
\end{equation}
With these definitions, our second set of assumptions is as follows:
\begin{assumption}[$g$, $\kappa_0$, $\kappa_1$, $h$, $h_{\star}$, $w$, $k$]
\label{cond2}
Suppose $\Ex[\bar{X}_{1j}^{2g}] < + \infty$ for every $j \in \left[p\right]$ and some $g > 2$ independent of $n$. Also, assume $\kappa_0 \leq \min_{k \in [p]}\Ex\left[\bar{X}_{1k}^2\right]$ and $\max _{k \in [p]} \Ex\left[\bar{X}_{1k}^4\right] \leq \kappa_1$ for $\kappa_0, \kappa_1 > 0$ absolute constants. Further assume that there exist $h, h_{\star} \in \left( 1,+\infty \right)$ and $w, k>0$ absolute constants such that:
\begin{equation}
\label{eq:cond2_prod}
    \forall v \in \mathbb{R}^p : \|v\|_0 \leq n \Rightarrow \sqrt{\mathbb{E}\left[\left(v^T\bar{X}_1\right)^4\right]}\leq h v^T \bar{\Sigma} v \; ,
\end{equation}
\begin{equation}
\label{eq:cond2_moment}
    \forall j \in \left[p\right] : \mathbb{E}\left[\bar{X}_{1j}^{2g}\right]^{1/g}\leq h_{\star}\; ,
\end{equation}
and
\begin{equation}
\label{eq:cond_restrict}
\mathrm{re}(\bar{\Sigma},S(\beta^0),w)\geq k \; .
\end{equation}
\end{assumption}

Equation \eqref{eq:cond2_prod} holds for a wide range of design matrices $\Sigma$, as mentioned in \cite{Oliveira2016}. In particular, if the entries of $X_1$ are all independent with finite fourth moment, a simple calculation shows that it is enough to consider
\begin{equation*}
    h =  6 \vee \left(\max_{j \in [p]: \mathbb{E}\bar{X}_{1j}^2 > 0} \frac{\sqrt{\mathbb{E}\bar{X}_{1j}^4}}{\mathbb{E}\bar{X}_{1j}^2}\right) = 6 \vee \left(\max_{j \in [p]: \sigma_j^2 > 0} \frac{\sqrt{\mathbb{E}({X}_{1j}-\mathbb{E}X_{1j})^4}}{\sigma_j^2}\right). 
\end{equation*}
The same calculation also applies to the case of four-wise independent entries. More importantly, it can be checked that \eqref{eq:cond2_prod} holds for sub-gaussian and log-concave vectors, as well as their affine transformations. In fact, one can easily see that condition \eqref{eq:cond2_prod} is invariant by linear transformations. 
Equation \eqref{eq:cond2_moment} is only a slight modification of \eqref{eq:cond2_prod} when applied to the canonical basis vectors of $\R ^p$, the difference being that we need moments of order strictly greater than 4. It also holds for the cases mentioned previously. 

The connection between these assumptions and the restricted eigenvalue property is made in \cite{Oliveira2016}. There, it is shown that, under data `standardization', one can obtain restricted eigenvalues without assuming sub-gaussian or bounded designs, as usually done in the reference literature. Some works that assume this kind of stricter tail behavior include \cite{10.3150/21-BEJ1348,10.1214/17-AOS1630,JMLR:v15:javanmard14a,10.1214/20-EJS1713,f29b4853-f902-378f-99ba-4ec108f6737e,10.1111/rssb.12026}. It is also implicitly assumed in \cite{10.1257/jep.28.2.29,10.1093/restud/rdt044,https://doi.org/10.3982/ECTA9626} by demanding bounded restricted or even sparse eigenvalues (see, for instance, Comment 3.2 in \cite{10.1093/restud/rdt044}).

The assumption on the restricted eigenvalue in equation \eqref{eq:cond_restrict} is common in our high-dimensional setting. We draw attention to the fact that this condition is much weaker than requiring the design matrix ${\Sigma}$ to be invertible. This stronger hypothesis is usually the one assumed throughout the reference literature on debiased lasso. \cite{JMLR:v15:javanmard14a}, for example, requires the smallest (global) eigenvalues of $\Sigma$ be bounded away from zero by some positive absolute constant. There, it is also assumed that the highest eigenvalue is bounded by some absolute constant. Similar hypotheses appear in \cite{10.1214/16-AOS1448,10.3150/21-BEJ1348,f29b4853-f902-378f-99ba-4ec108f6737e,10.1214/17-AOS1630}, among many others. 

\begin{assumption}[$q$, $q'$, $r$]
\label{cond3}Set $s = \|\beta^0\|_0 = |S(\beta^0)|$, with $\beta^0$ as in equation \eqref{eq:linear_model}. Assume:
\begin{equation}
\label{eq:sparse_truth}
    s = o\left(\frac{n^{1/2-1/q-1/r}}{\log p}\right) \; ,
\end{equation}
with $q'>q>2$ independent of $n$, for $q'$ as in Assumption \ref{cond1}, and $r>2$ also independent of $n$.
\end{assumption}

This sparsity condition is close to the usual sparsity requirement for high-dimensional inference, that is 
\begin{equation}
\label{eq:lit_sparse}
    s = o\left(\frac{\sqrt n}{\log p}\right) \; .
\end{equation}
This slightly weaker requirement is ubiquitous in the literature; a non-exhaustive list of works with such assumption includes \cite{JMLR:v15:javanmard14a,10.1214/20-EJS1713,f29b4853-f902-378f-99ba-4ec108f6737e,10.1111/rssb.12026,10.1257/jep.28.2.29,https://doi.org/10.3982/ECTA9626,10.1093/restud/rdt044}.
This highlights that the price we pay for not assuming light tails or independent errors are the additional terms with $r$ and $q$ in the exponent of $n$. 

An extensive amount of work has been done in order to approximate this rate to the much weaker requirements for prediction with the lasso, which assumes only $ s = o \left({n}/{\log p}\right)$. For instance, \cite{10.1214/17-AOS1630} is able to achieve better rates; namely $s = o\left({n}/{(\log p)^2}\right)$, but only in the case of gaussian designs with known covariance. \cite{10.3150/21-BEJ1348} gives optimal results in this same special scenario. To our knowledge, the more restrictive rate has not been weakened even for general sub-gaussian designs. Importantly, \cite{10.1214/16-AOS1461} showed that, from a minimax perspective, $\sqrt{n}$- confidence intervals  for a single coordinate can only be obtained under the strong assumption on $s$ given in equation \eqref{eq:lit_sparse}.

\begin{assumption}[$K_0$, $K_1$, $r'$, $h_0$, $K_2$, $w'$, $k'$]
\label{cond4}
Fix $j \in [p]$ and assume there exists $\theta^{(j)} \in \R^p$ such that $\bar{\Sigma}\theta^{(j)} = e_j$. Suppose (i) $K_0 \leq \min_{k \in [p]}\Ex\left[(\bar{X}_{1k}\bar{X}_1^T\theta^{(j)})^2\right]$ for $K_0 > 0$ some absolute constant; (ii) $\max _{k \in [p]} \Ex\left[(\bar{X}_{1k}\bar{X}_1^T\theta^{(j)})^4\right] \leq K_1$, for some positive absolute constant $K_1$; (iii)  there exists $r'>2$ and $h_0>1$ independent of $n$ such that:
\begin{equation}
\label{eq:bound_theta}
\mathbb{E}\left[|\langle\bar{X}_{1},\theta^{(j)} \rangle|^{r'}\right]\leq h_{0} \langle \theta^{(j)},\bar{\Sigma} \theta^{(j)} \rangle^{r'/2} = h_{0} \left(\theta_{j}^{(j)}\right)^{r'/2} \; ;
\end{equation}
(iv) Assume $\max_{k \in [p]}\mathbb{E}\left[|\bar{X}_{1k}|^{3r'}\right] \leq K_2$, for some absolute constant $K_2>0$;
(v) Only for Method 2, also assume:
\begin{equation}
\label{eq:cond_restrict_2}
\mathrm{re}(\bar{\Sigma},S(\theta^{(j)}),w')\geq k' \; ,
\end{equation}
for some positive absolute constants $w'$ and $k'$.
\end{assumption}

As before in the conditions of Assumption \ref{cond2}, the requirement in equation \eqref{eq:bound_theta} holds for a large class of random vectors, such as sub-gaussians, log-concave distributions and ones with four-wise independent entries. Importantly, Assumption \ref{cond4} requires only the existence of a surrogate for the $j$-th column of `$\Sigma^{-1}$' if one wishes to inference on this coordinate, which again allows for $\Sigma$ to be singular.
\begin{assumption}[$r$, $r'$, $K_{\theta}$]
\label{cond5}
For $\theta^{(j)}$ and $r'$ as given in Assumption \ref{cond4}, take $r$ as in Assumption \ref{cond3} satisfying $r'>r>2$. Suppose
\begin{equation}
\label{eq:bounded_theta}
    \theta_{j}^{(j)} \leq K_{\theta} \; ,
\end{equation}
for some positive absolute constant $K_{\theta}$. For Method 1 assume:
\begin{equation}
\label{eq:sparse_theta}
    \|\theta^{(j)}\|_1 = o\left(\frac{n^{1/2-1/r}}{\sqrt{\log p}}\right) .
\end{equation}
For Method 2, take:
\begin{equation}
\label{eq:sparse_theta_2}
    \|\theta^{(j)}\|_1 = o\left(\frac{n^{1/2-1/r}}{\sqrt{\log p}}\right) \; , \; \; s_{\theta} = \|\theta^{(j)}\|_0 = o\left(\frac{n^{1/2-1/r - 1/q}}{{\log p}}\right) ,
\end{equation}
with $s_\theta$ as in Assumption \ref{cond1}.
\end{assumption}

The requirement given in equation \eqref{eq:bounded_theta} is connected to the smallest eigenvalue of the matrices $\bar{\Sigma}$ and ${\Sigma}$. Indeed, if we were willing to assume for a moment that $\lambda_{min}(\bar{\Sigma})\geq \lambda_0>0$, it would hold that:
\begin{equation*}
    \theta_{j}^{(j)} \leq \frac{\sigma_j}{\lambda_0} \; ,
\end{equation*}
which implies condition \eqref{eq:bounded_theta} as long as the variance of a single coordinate of $X_1$ stays bounded. Again, we recall that such bounded eigenvalues requirements are widely present in other works such as \cite{JMLR:v15:javanmard14a,10.1214/16-AOS1448,10.3150/21-BEJ1348,f29b4853-f902-378f-99ba-4ec108f6737e,10.1214/17-AOS1630,10.1214/20-EJS1713}. 

Likewise, the assumption in equation \eqref{eq:sparse_theta} for Method 1 is very close to the usual sparsity requirements in high-dimensional inference; to wit:
\begin{equation}
\label{eq:lit_sparse_sigma}
\max_{j \in [p]}\|\Sigma^{-1}e_j\|_0 = o \left({\frac{n}{\log p}}\right) .
\end{equation}
This is required in the original version of the debiased lasso as proposed in \cite{f29b4853-f902-378f-99ba-4ec108f6737e}; \cite{10.1214/16-AOS1448} also employs a similar assumption. \cite{JMLR:v15:javanmard14a} is able to remove this sparsity requirement, but still under suppositions such as sub-gaussian design, non-singular covariance matrix with bounded eigenvalues and independent errors. \cite{10.1214/17-AOS1630}, which deals only with gaussian designs, demands a similar sparsity assumption in the case of unknown covariance. Additionally, \cite{10.3150/21-BEJ1348} argues that even in this simple scenario a sparsity close to \eqref{eq:lit_sparse_sigma} is necessary. The unknown covariance case is contemplated in our results without further hypotheses than the ones given in this section.

Further note that, under bounded eigenvalues requirement, \eqref{eq:lit_sparse_sigma} implies:
\begin{equation}
\label{eq:l1_bound_eigenvalue}
    \lambda_0{\max_{j \in [p]}\|\Sigma^{-1}e_j\|_1} = o \left(\sqrt{\frac{n}{\log p}}\right) \; ,
\end{equation}
which closely matches our assumption in equation \eqref{eq:sparse_theta}, recalling the definition of $\theta^{(j)}$ as a substitute for the $j$-th column of $\Sigma^{-1}$.
This again highlights that the main differences between the usual sparsity rates and our assumptions for Method 1 are the additional terms in the exponent of $n$ and the lack of necessity to assume sparsity of all columns of $\Sigma^{-1}$, or even its existence, if one is interested in a single coordinate. Also, it should be clear that, even though the sparsity assumptions in this work are slightly stronger, they get progressively less restrictive the more one allows for higher values of $r$ and $q$. In the particular case of sub-gaussian designs, $r$ and $q$ can be taken arbitrarily large.

Equation \eqref{eq:sparse_theta_2} for Method 2, however, includes an additional requirement on the sparsity of $\theta^{(j)}$. Similarly to the discussion above, under strong hypothesis on $\bar{\Sigma}$, it can be shown that this assumption on $\|\theta^{(j)}\|_0$ implies the one on $\|\theta^{(j)}\|_1$, although the implication is not true in general. We state the hypothesis on $\|\theta^{(j)}\|_0$ and $\|\theta^{(j)}\|_1$ separately since we do not impose such strong restrictions.

Still, equation \eqref{eq:lit_sparse_sigma} shows that, under non-singular $\Sigma$ with bounded eigenvalues, the usual assumption in the `debiased lasso' literature removes the square root inside the order term for $\|\theta^{(j)}\|_0$. This is the price to pay for considering the additional approximation error $\rho$ as given in the linear model \eqref{eq:linear_model}. We highlight that the works that assume the weaker rate given in \eqref{eq:lit_sparse_sigma} do not consider this difficult scenario. As mentioned above, some examples for this case include \cite{10.3150/21-BEJ1348,10.1214/17-AOS1630,JMLR:v15:javanmard14a,10.1214/20-EJS1713,f29b4853-f902-378f-99ba-4ec108f6737e,10.1111/rssb.12026}. Works that do consider approximation errors also require a stronger hypothesis. For example, \cite{10.1093/restud/rdt044} asks for $s_{\theta} = o(\sqrt{n}/\log p)$, among other requirements we do not impose. This highlights how both methods we consider demand assumptions that are very close to the state of the art, even under weaker hypotheses on the data distribution for the data and regression errors.

\section{Main result}\label{sec:results}

With the conditions in the previous section we are ready to write our main result.
\begin{theorem}
\label{thm:main}
Take $\hat{\beta}^u_j$ as defined above in equation \eqref{eq:debiased}, using either Method 1 or Method 2 to compute the vector $m^{(j)}$. Set 
\begin{equation*}
    \lambda =  \frac{c_0\sqrt{\log p}}{ n^{1/2-1/q}} \; , \; \; \zeta = c' n^{1/r} \; , \; \; \mu =  \frac{C\sqrt{\log p}}{n^{1/2-1/r}} \; ,  \; \lambda_M =  \frac{c_2\sqrt{\log p}}{n^{1/2-1/r}} \; ,
\end{equation*}
where $c_0,c',c_2, C > 0$ are sufficiently large absolute constants. Fix $j \in [p]$ and consider Assumptions \ref{cond1} through \ref{cond5}.
Then:
\begin{equation*}
    \sqrt{n} \left(\hat{\beta}^{u}_{j}/\hat{\sigma}_j-\beta^0_j\right) \xrightarrow{d}\mathcal{N}(0,\mathbb{E}[\langle \bar{X}_{1},\theta^{(j)}\rangle^2\epsilon_{1}^{2}]/\sigma_j^2) \; .
\end{equation*}
\end{theorem}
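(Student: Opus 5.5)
Starting from the decomposition \eqref{eq:basic_eq}, we have $\sqrt{n}(\hat\beta^u_j/\hat\sigma_j-\beta^0_j) = (Z_j+\Delta_j)/\hat\sigma_j$. The plan is to show three things: $\hat\sigma_j/\sigma_j\to 1$ in probability, $\Delta_j=o_p(1)$, and $Z_j\xrightarrow{d}\mathcal{N}(0,\Ex[\langle\bar X_1,\theta^{(j)}\rangle^2\epsilon_1^2])$. Slutsky's lemma then gives the claim. The first point follows from the weak law of large numbers under the finite fourth moment bound $\kappa_1$ of Assumption \ref{cond2}.

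For $\Delta_j$ we use Hölder's inequality:
\[
|\Delta_j|\le \sqrt n\,\|e_j-\tilde\Sigma_n m^{(j)}\|_\infty\,\|\hat\beta-D_X^{1/2}\beta^0\|_1 .
\]
\emph{Lasso rate.} We first prove $\|\hat\beta-D_X^{1/2}\beta^0\|_1=O_p(s\lambda)$ with $\lambda\asymp\sqrt{\log p}/n^{1/2-1/q}$. This has two ingredients.
\begin{itemize}
\item A restricted eigenvalue bound for $\tilde\Sigma_n$ on the cone $\Delta_{w,S(\beta^0)}$. We take this from the self-normalized results of \cite{Oliveira2016} under \eqref{eq:cond2_prod}, \eqref{eq:cond2_moment} and \eqref{eq:cond_restrict}. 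The population-to-sample standardization $D_X$ versus $\sigma_j$ costs only constant factors, since $\max_k|\hat\sigma_k/\sigma_k-1|=o_p(1)$ by a moment bound with exponent $g>2$ and a union bound.
\item The event $\|\tilde X^T\epsilon/n\|_\infty\le\lambda/2$ holds with high probability (for Method 1, $\rho=0$). Since the columns of $\tilde X$ satisfy $\|\tilde X_{\cdot k}\|_2^2=n$, we have $\tilde X_{\cdot k}^T\epsilon/n$ equal to a self-normalized sum up to the centering correction. We truncate $\epsilon$ at level $n^{1/q}$: the truncated part is handled with de la Peña-type self-normalized exponential inequalities, giving $\sqrt{\log p/n}\,n^{1/q}$; the tail is handled by Markov's inequality with the $q'>q$ moment. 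This explains the factor $n^{1/q}$ in $\lambda$.
\end{itemize}
For Method 2 we add $\|\tilde X^T\rho/n\|_\infty$, controlled using \eqref{eq:normal_equations} and $\Ex\rho_1^2\le\sigma_\rho^2 s_\theta/n$. The standard oracle inequality then gives the rate $s\lambda$.

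\emph{Method 1.} $\|e_j-\tilde\Sigma_n m^{(j)}\|_\infty\le\mu$ by construction. It remains to show feasibility w.h.p. of the scaled population vector $\tilde\theta=D_X^{1/2}\theta^{(j)}/\sigma_j$ (up to the appropriate rescaling). Each coordinate of $\tilde\Sigma_n\tilde\theta-e_j$ is an average of i.i.d. mean-zero terms $\bar X_{1k}\bar X_1^T\theta^{(j)}-\delta_{jk}$ with variance bounded below and above by Assumption \ref{cond4}(i),(ii). A self-normalized/truncation bound with the $r'$ moments of \eqref{eq:bound_theta} and (iv) gives a sup over $k$ of order $\sqrt{\log p}\,n^{1/r}/\sqrt n$, matching $\mu$. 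Centering errors $\hat\mu-\mu$ contribute terms bounded by $\|\theta^{(j)}\|_1\sqrt{\log p/n}$ times small factors, which is where \eqref{eq:sparse_theta} enters. The constraint $\|\tilde X\tilde\theta\|_\infty\le\zeta=c'n^{1/r}$ follows from a union bound over $n$ rows with $r'>r$ moments. Hence $|\Delta_j|\lesssim\sqrt n\,\mu\, s\lambda = s\log p\; n^{1/2-1/2+1/r-1/2+1/q}=o(1)$ by \eqref{eq:sparse_truth}.

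\emph{Method 2.} $m^{(j)}$ is itself a lasso with KKT condition $\|e_j-\tilde\Sigma_n m^{(j)}\|_\infty\le\lambda_M$, so the same Hölder bound applies. For the $Z_j$ analysis we also need $\|m^{(j)}-\tilde\theta\|_1=O_p(s_\theta\lambda_M)$ and $\|\tilde X(m^{(j)}-\tilde\theta)\|_2^2/n=O_p(s_\theta\lambda_M^2)$. These follow from the restricted eigenvalue bound \eqref{eq:cond_restrict_2} transported to $\tilde\Sigma_n$ as above, together with the same sup-norm bound on $\tilde\Sigma_n\tilde\theta-e_j$.

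\emph{Main obstacle: the Gaussian limit of $Z_j$.} Write
\[
Z_j = \frac{1}{\sqrt n}\tilde\theta^T\tilde X^T\epsilon + \frac{1}{\sqrt n}(m^{(j)}-\tilde\theta)^T\tilde X^T\epsilon + \frac{1}{\sqrt n}(m^{(j)})^T\tilde X^T\rho .
\]
The first term, after replacing empirical by population centering and scaling (errors $o_p(1)$ via $\|\theta^{(j)}\|_1$ bounds and $\hat\mu$ concentration), is $n^{-1/2}\sum_i\langle\bar X_i,\theta^{(j)}\rangle\epsilon_i/\sigma_j$ up to $o_p(1)$. The classical CLT applies to it, with finite variance from \eqref{eq:bound_theta} and Assumption \ref{cond1}.

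For the remaining terms the difficulty is that $m^{(j)}$ depends on $X$, but not on $\epsilon$. For Method 1 (where $\rho=0$), we condition on $X$: the conditional variance of the second term is at most $\sigma_\epsilon^2(m^{(j)}-\tilde\theta)^T\tilde\Sigma_n(m^{(j)}-\tilde\theta)$. By optimality, $(m^{(j)})^T\tilde\Sigma_n m^{(j)}\le\tilde\theta^T\tilde\Sigma_n\tilde\theta$, and the constraint gives $\tilde\theta^T\tilde\Sigma_n m^{(j)}\approx m^{(j)}_j\approx\tilde\theta_j$ within $\mu\|\theta\|_1=o(1)$. This yields the vanishing bound, following the argument of \cite{JMLR:v15:javanmard14a}; the $\zeta$ constraint is used only to verify a conditional Lindeberg condition if needed. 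For Method 2 the same conditional variance bound is $O_p(s_\theta\lambda_M^2)=o_p(1)$. The $\rho$ term is split as $\tilde\theta^T\tilde X^T\rho/\sqrt n$, which is $o_p(1)$ using \eqref{eq:normal_equations} (mean zero) and $\Ex\rho^2\lesssim s_\theta/n$, plus a remainder bounded via Cauchy–Schwarz by $\sqrt{s_\theta\lambda_M^2}\cdot\sqrt{s_\theta}=o(1)$ under \eqref{eq:sparse_theta_2}. The delicate bookkeeping of these exponents, and the heavy-tailed union bounds underlying the sup-norm estimates, is where I expect most of the work.
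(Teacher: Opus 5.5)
Your overall architecture matches the paper's: the Hölder bound on $\Delta_j$ via the lasso $\ell_1$ rate and the $\ell_\infty$ constraint or KKT condition, approximate feasibility of the population vector, the quadratic-form bound on $(m^{(j)}-\theta^{(j)})^T\tilde\Sigma_n(m^{(j)}-\theta^{(j)})$, and the classical CLT for the population part. Your truncation argument for $\|\tilde X^T\epsilon/n\|_\infty$, using $\|\tilde X_{\cdot k}\|_2^2=n$ and the $q'>q$ moments for the tail, is a legitimate substitute for the paper's Nemirovski--Markov bound.

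\textbf{Main gap.} The claim that $\max_k|\hat\sigma_k/\sigma_k-1|=o_p(1)$ ``by a moment bound with exponent $g>2$ and a union bound'' fails. With finitely many moments, Markov gives a polynomial rate per column and the union bound multiplies it by $p$. The hypotheses, however, let $\log p$ grow like a power of $n$. The claim is in fact false there: with independent heavy-tailed coordinates and $p$ beyond a suitable power of $n$, some column has an entry much larger than $\sqrt n$, and $\max_k\hat\sigma_k/\sigma_k\to\infty$. Avoiding this is exactly what standardization buys.

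What holds uniformly in $k$ is only one-sided or self-normalized:
\begin{itemize}
\item $\max_k\sigma_k/\hat\sigma_k<4$ and $\max_k\hat\nu_k/\hat\sigma_k<2$ with probability $1-O(e^{-c_1n})$ when $n\gtrsim\log p$. This comes from an exponential lower-tail bound for nonnegative sums that needs only four moments (Lemma~\ref{lemma:bound_max}).
\item $\max_k|\hat\mu_k-\mu_k|/\hat\sigma_k\lesssim\sqrt{\log p/n}$ (Lemma~\ref{lemma:self_norm1}).
\end{itemize}
Accordingly, the paper transports the restricted eigenvalue through $\tilde\Sigma_n=R_X\bar\Sigma_nR_X-D_X^{-1/2}B_XB_X^TD_X^{-1/2}$. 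Here $R_X=\mathrm{diag}(\hat\nu_k/\hat\sigma_k)$ has entries in $[1,2]$, and Theorem~\ref{thm:restricted_eigenvalues_adaptation} is applied to the $\hat\nu$-normalized $\bar\Sigma_n$, with no uniform two-sided comparison of $\hat\sigma_k$ and $\sigma_k$.

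Every two-sided scaling error is then handled by $|\theta^{(j)}_k|$-weighted expectations plus Markov, never by a maximum over $k$. This covers the $\Gamma_4$ term in the feasibility lemma and the $(1-\hat\sigma_k/\sigma_k)$ remainders in $Z^{(2)}_j$ and $R^{(2)}_j$. A typical bound is $\sum_k|\theta^{(j)}_k|\,\Ex(1-\hat\sigma_k/\sigma_k)^2\le 2\kappa_1\|\theta^{(j)}\|_1/n$ (Lemma~\ref{lemma:variance}). Your ``replace empirical by population scaling via $\|\theta^{(j)}\|_1$ bounds'' must be done this way.

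\textbf{Method 2 and the error $\rho$.} You cannot fold $\rho$ into the noise condition $\|\tilde X^T(\epsilon+\rho)/n\|_\infty\le\lambda/c$.
\begin{itemize}
\item The normal equations give mean zero, but $\Ex[\bar X_{1k}^2\rho_1^2]$ is uncontrolled and there is no uniform-in-$k$ concentration.
\item The crude bound $\|\rho\|_2/\sqrt n=O_p(\sqrt{s_\theta/n})$ is below $\lambda$ only if $s_\theta\lesssim n^{2/q}\log p$, which the assumptions do not give.
\end{itemize}
The paper instead treats $\rho$ as the $v$-term of Lemma~\ref{lemma:general_lasso}, via $|\langle\tilde X\delta,\rho\rangle|/n\le\|\tilde X\delta\|_2\|\rho\|_2/n$. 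This gives the $\ell_1$ rate $\max\{s\lambda,\|\rho\|_2^2/(n\lambda)\}$, and the extra piece adds only $\sqrt n\,\lambda_M s_\theta/(n\lambda)\asymp s_\theta n^{1/r-1/q-1/2}=o(1)$ to $\Delta_j$.

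Likewise, $n^{-1/2}\sum_i\rho_i\langle\bar X_i,\theta^{(j)}\rangle$ has variance $\Ex[\rho_1^2\langle\bar X_1,\theta^{(j)}\rangle^2]$, which $\Ex\rho_1^2$ alone does not bound. The paper truncates $\langle\bar X_i,\theta^{(j)}\rangle$ at $n^{1/r}$ using the $r'$ moments.

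\textbf{Choice of comparator and the $\zeta$-constraint.} The comparator should be $\theta^{(j)}$ itself, not $D_X^{1/2}\theta^{(j)}/\sigma_j$. The matrix $\tilde\Sigma_n$ estimates the correlation matrix $\bar\Sigma$, and $\bar\Sigma\theta^{(j)}=e_j$. Your rescaled vector is not approximately feasible. It also puts a spurious $1/\sigma_j$ into the limit of $Z_j$, which after dividing by $\hat\sigma_j$ gives the wrong variance.

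For the $\zeta$-constraint you must also control $(\tilde X-\bar X)\theta^{(j)}$, not only $\bar X\theta^{(j)}$. The paper does this with Lemma~\ref{lemma:moment_bound} and the $3r'$ moments of Assumption~\ref{cond4}(iv).
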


Theorem \ref{thm:main} is a consequence of other intermediary results discussed in Appendix \ref{appendix:A}. Its proof is also in Appendix \ref{appendix:A}. We highlight that, although the theorem above is given as an asymptotic statement, some of the results given in the appendix are non-asymptotic. 

\begin{remark}
    The above theorem gives a confidence interval for a single coordinate. It is direct to generalize it to a bounded number of coordinates of $\beta^0$, but one cannot hope to extend it further under weak moment assumptions in our very high-dimensional setup. Recent works, such as \cite{KOCK_2024110149}, have shown the impossibility of obtaining such uniform results under an exponential growth of $p$ relative to $n$ in this scenario; it is necessary to assume a polynomial relation of the form $p = o(n^c)$.
\end{remark}

\begin{remark}
    For simplicity, we state Theorem \ref{thm:main} with precise choices for the regularization parameters, as is typical for lasso-type methods. Our proofs can in principle accommodate a range of such choices. In practice, one may tune these parameters. Cross-validation techniques can be used for this purpose. The choice of the lasso regularization $\lambda$ is standard. The other parameters are related to moments of the standardized random variables $(X_{1j}-\mu_j)/\sigma_j$, which are ``dimension-free''~quantities that can be bounded under weak assumptions. Guided by the theorems below and our simulation experiments in Section \ref{sec:experiments}, we suggest setting $c' = 2$ and $c_0,C,c_2$ close to $0.5$ for $r=q \approx 5$. Of course, these should balance the choices of $r$ and $q$. A lower value for $C$ should be preferred for `harder' problems since it imposes a stronger restriction on the optimization problem for Method 1, while higher values of $c_0, c_2$ impose more regularization and should also be preferred in such cases.
\end{remark}

\begin{remark} The asymptotic variance for the limit distribution in Theorem \ref{thm:main} is bounded above by $\sigma_{\epsilon}^2 \theta_j^{(j)}/\sigma^2_j$, from Assumption \ref{cond1} and Assumption \ref{cond4}. This is indeed the optimal variance as obtained in \cite{f29b4853-f902-378f-99ba-4ec108f6737e}. There, it is argued that the best linear unbiased estimator for $\beta^0_{j}$ has variance given by $\sigma_\epsilon^2/\mathbb{V} [X_{1j}-X_{1,-j}\gamma_j]$, where $X_{1,-j} \in \R^{p-1}$ is the vector $X_1$ without its $j$-th coordinate and $\gamma_j$ is the coefficient obtained by regressing the $j$-th column of $X$, $X_{.,j}$, on $X$ with its $j$-th column removed, $X_{-j}$. This follows from the Gauss-Markov Theorem and the observation that 
\begin{equation*}
    Y = \beta^0_j (X_{.,j} - X_{-j}\gamma_j) + X_{-j}(\beta^0_{-j} + \beta^0_j\gamma_j) + \epsilon \; 
\end{equation*}
is a linear submodel of \eqref{eq:linear_model} in the case $\rho=\gamma=0$. Then, by the definition of $\theta^{(j)}$ given in Assumption \ref{cond4} and standard linear algebra calculations, it can be shown that:
\begin{equation*}
    \frac{1}{\mathbb{V} [X_{1j}-X_{1,-j}\gamma_j]}= \frac{\theta^{(j)}_j}{\sigma^2_j} \; ,
\end{equation*}
which implies the claim for the variance of our estimator. We also highlight that this is equivalent to the variance obtained in \cite{10.1093/restud/rdt044} for the homoskedastic case.
\end{remark}

From this asymptotic distribution, we can easily establish valid confidence intervals. The variance term $\Ex \left[\langle \bar{X}_1,\theta^{(j)}\rangle^2\epsilon_1^2\right]$ is not known a priori and must be estimated from the data in order to construct such intervals in a practical application. Still, one can use:
\begin{equation*}
    \hat{I}_j = \left[ \frac{\hat{\beta}^u_j}{\hat{\sigma}_j} - \Phi^{-1}(1-\alpha/2)\sqrt\frac{\hat{V}}{\hat{\sigma}_j^2n}, \frac{\hat{\beta}^u_j}{\hat{\sigma}_j} + \Phi^{-1}(1-\alpha/2)\sqrt\frac{\hat{V}}{\hat{\sigma}_j^2n}\right] \; ,
\end{equation*}
where $\hat{V}$ is any consistent estimator of $\Ex \left[\langle \bar{X}_1,\theta^{(j)}\rangle^2\epsilon_1^2\right]$ and $\Phi$ is the cumulative function of the standard normal distribution. We argue that the plug-in estimator is a valid choice in our setting.

\begin{corollary}
\label{cor:interval_2}
Take $\hat{\beta}^u_j$ as defined above in equation \eqref{eq:debiased}, using either Method 1 or Method 2 to compute the vector $m^{(j)}$. Set 
\begin{equation*}
    \lambda =  \frac{c_0\sqrt{\log p }}{ n^{1/2-1/q}} \; , \; \; \zeta = c' n^{1/r} \; , \; \; \mu =  \frac{C\sqrt{\log p}}{n^{1/2-1/r}} \; ,  \; \lambda_M =  \frac{c_2\sqrt{\log p}}{n^{1/2-1/r}} \; ,
\end{equation*}
where $c_0,c',c_2, C > 0$ are sufficiently large absolute constants. Fix $j \in [p]$ and consider Assumptions \ref{cond1} through \ref{cond5}. Define:
    \begin{equation*}
        \hat{V}_n = \frac{1}{n}\sum_{i=1}^{n} \langle\tilde{X}_i,m^{(j)}\rangle^2 (Y_i - \tilde{X}_i^T\hat{\beta}- \hat{\gamma})^2
    \end{equation*}
    Then, the interval:
    \begin{equation}
    \label{eq:confidence_interval}
        \hat{I}_j = \left[ \frac{\hat{\beta}^u_j}{\hat{\sigma}_j} - \Phi^{-1}(1-\alpha/2)\sqrt\frac{\hat{V}_n}{\hat{\sigma}_j^2n}, \frac{\hat{\beta}^u_j}{\hat{\sigma}_j} + \Phi^{-1}(1-\alpha/2)\sqrt\frac{\hat{V}_n}{\hat{\sigma}_j^2n}\right] \, 
    \end{equation}
    is asymptotically valid with confidence level $1-\alpha \in (0,1)$, where $\Phi$ is the cumulative distribution function of the standard gaussian distribution. That is:
    \begin{equation*}
        \lim_{n \rightarrow \infty} \Pr \left[\beta_j^0 \in \hat{I}_j\right] = 1-\alpha \; .
    \end{equation*}
\end{corollary}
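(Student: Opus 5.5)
The plan is to reduce the corollary to Theorem \ref{thm:main} via Slutsky's lemma. Write $V=\Ex[\langle \bar{X}_1,\theta^{(j)}\rangle^2\epsilon_1^2]$. Theorem \ref{thm:main} gives $\sqrt{n}(\hat{\beta}^u_j/\hat{\sigma}_j-\beta^0_j)\sigma_j/\sqrt{V}\xrightarrow{d}\mathcal{N}(0,1)$. It therefore suffices to prove two facts: $\hat{\sigma}_j/\sigma_j\to 1$ in probability, and $\hat{V}_n\to V$ in probability, with $V$ bounded away from zero. Then $\sqrt{n}(\hat{\beta}^u_j/\hat{\sigma}_j-\beta^0_j)\hat{\sigma}_j/\sqrt{\hat{V}_n}\xrightarrow{d}\mathcal{N}(0,1)$, and $\Pr[\beta^0_j\in\hat{I}_j]$ is exactly the probability that this pivot lies in $[-\Phi^{-1}(1-\alpha/2),\Phi^{-1}(1-\alpha/2)]$. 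By continuity of $\Phi$ this probability tends to $1-\alpha$.

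The first fact is the weak law of large numbers, since $\Ex\bar{X}_{1j}^4\le\kappa_1$. For the lower bound on $V$, use $\Ex[\epsilon_1^2|X_1]$. If only an upper bound on the conditional variance is available, I would add the mild requirement $V\ge c>0$, which is implicit in a nondegenerate limit. Note also that $V$ may depend on $n$, so the convergence $\hat{V}_n-V\to 0$ must be proved directly rather than via a fixed-limit WLLN.

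For $\hat{V}_n$, the residual is $\hat{\epsilon}_i=\epsilon_i+\rho_i+(\gamma^0+\bar{X}\text{-centering terms}-\hat{\gamma})-\tilde{X}_i^T(\hat{\beta}-D_X^{1/2}\beta^0)$. The intermediate results of Appendix \ref{appendix:A} used for Theorem \ref{thm:main} supply three ingredients: the lasso rate $\|\hat{\beta}-D_X^{1/2}\beta^0\|_1=O_p(s\lambda)$; the bound $\|\tilde{X}m^{(j)}\|_\infty=O_p(n^{1/r})$ (the constraint in Method 1, and a derived bound in Method 2); and the closeness of $\tilde{X}m^{(j)}$ to $\bar{X}\tilde\theta$, where $\tilde\theta$ is the appropriate rescaling of $\theta^{(j)}$, in the sense $\frac1n\|\tilde{X}m^{(j)}-\bar{X}\theta^{(j)}\|_2^2=o_p(1)$. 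This last fact underlies the convergence of the variance of $Z_j$ in the theorem's proof.

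Decompose $\hat{V}_n-V$ into four pieces and handle them in order:
\begin{enumerate}[label=(\roman*)]
\item $\frac1n\sum_i\langle\bar{X}_i,\theta^{(j)}\rangle^2\epsilon_i^2-V\to 0$. This follows from a truncated WLLN (or a Chebyshev bound) using $r'>2$ moments of $\langle\bar X_1,\theta^{(j)}\rangle$ and $q'>2$ conditional moments of $\epsilon$, via Hölder.
\item Replacing $\langle\bar X_i,\theta^{(j)}\rangle$ by $\langle\tilde X_i,m^{(j)}\rangle$. Use $|a^2-b^2|\le|a-b|(|a|+|b|)$, Cauchy--Schwarz, the $\ell_2$ closeness above, and $\frac1n\sum\epsilon_i^4$-type control. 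Where fourth moments of $\epsilon$ are unavailable, use the $\ell_\infty$ bound on $\tilde Xm^{(j)}$ together with $\max_i|\epsilon_i|=O_p(n^{1/q'})$.
\item The residual correction. Bound $\max_i|\tilde X_i^T(\hat\beta-D_X^{1/2}\beta^0)|\le\max_{i,k}|\tilde X_{ik}|\,\|\hat\beta-D_X^{1/2}\beta^0\|_1$, with $\max_{i,k}|\tilde X_{ik}|=O_p(n^{1/g'}\sqrt{\log p})$ from the moment assumptions. Combine this with $\frac1n\sum\langle\tilde X_i,m^{(j)}\rangle^2=O_p(\theta^{(j)}_j)$.
\item The $\rho$ and intercept terms. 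These are small since $\Ex\rho_i^2\le\sigma_\rho^2s_\theta/n\to 0$, $\hat\gamma-\gamma^0-\langle\mu,\beta^0\rangle$ is $O_p(n^{-1/2})$, and $\|\tilde Xm^{(j)}\|_\infty^2 s_\theta/n$ is $o(1)$ under Assumption \ref{cond5}.
\end{enumerate}

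The main obstacle is (iii). A crude $\ell_\infty$ bound on the prediction error may not beat the $n^{2/r}$ growth of $\|\tilde Xm^{(j)}\|_\infty^2$. I would therefore first try the sharper route $\frac1n\sum_i\langle\tilde X_i,m^{(j)}\rangle^2(\tilde X_i^T\delta)^2\le\|\tilde Xm^{(j)}\|_\infty^2\,\delta^T\tilde\Sigma_n\delta$, with $\delta=\hat\beta-D_X^{1/2}\beta^0$. This uses the prediction rate $\delta^T\tilde\Sigma_n\delta=O_p(s\lambda^2)$, so the term is $O_p(n^{2/r}s\log p\, n^{-1+2/q})$. That quantity is $o(1)$ by Assumption \ref{cond3}, since $s=o(n^{1/2-1/q-1/r}/\log p)$. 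The cross terms between $\epsilon_i$ and $\tilde X_i^T\delta$ then follow by Cauchy--Schwarz from (i)–(ii) and this bound.
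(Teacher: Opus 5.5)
Your overall route is the paper's. The corollary follows from Theorem~\ref{thm:main} by Slutsky's theorem together with consistency of $\hat V_n$ (Lemma~\ref{lemma:plug_in}). Your decomposition is also essentially the one used there: swap $\langle\bar X_i,\theta^{(j)}\rangle$ for $\langle\tilde X_i,m^{(j)}\rangle$, then $\epsilon$ for $\hat\epsilon$, and bound the residual part by $\|\tilde X m^{(j)}\|_\infty^2\cdot\frac1n\|\epsilon-\hat\epsilon\|_2^2$, with the latter controlled in Lemma~\ref{lemma:epsilon_error}. Your remark that $V$ must stay bounded away from zero is fair; the paper's one-line Slutsky argument silently needs it too. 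However, two of your steps do not go through as written.

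First, in (ii), neither tool you name works under the stated hypotheses.
\begin{itemize}
\item Only $q'>2$ conditional moments of $\epsilon$ are assumed, so $\frac1n\sum_i\epsilon_i^4$ is not controlled.
\item The fallback via $\max_i|\epsilon_i|=O_p(n^{1/q'})$ produces a polynomially growing factor that your closeness bound cannot absorb. For Method~1 you only have $\frac1n\|\tilde X(m^{(j)}-\theta^{(j)})\|_2^2\le 4\|\theta^{(j)}\|_1\mu=o(1)$, with no rate.
\end{itemize}
The missing idea is to keep $\epsilon_i^2$ as a weight in both Cauchy--Schwarz factors and condition on $X$. With $a_i=\langle\bar X_i,\theta^{(j)}\rangle$ and $b_i=\langle\tilde X_i,m^{(j)}\rangle$, both are $X$-measurable, and $\Ex[\epsilon_i^2\mid X]=\Ex[\epsilon_i^2\mid X_i]\le\sigma_\epsilon^2$. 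So $\frac1n\sum_i(a_i-b_i)^2\epsilon_i^2=O_p\bigl(\frac1n\|a-b\|_2^2\bigr)$ by conditional Markov. This is exactly how the paper handles $\|\bar v-\hat v^{(1)}\|_2$ and $\|\hat v^{(1)}-\hat v^{(2)}\|_2$.

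Second, for Method~2 there is no bound $\|\tilde X m^{(j)}\|_\infty=O_p(n^{1/r})$. Problem \eqref{eq:opt2} has no $\ell_\infty$ constraint. The only available control comes from \eqref{eq:bound_error_2}, namely $\|\tilde X(m^{(j)}-\theta^{(j)})\|_\infty\le\|\tilde X(m^{(j)}-\theta^{(j)})\|_2=O_p(n^{1/r}\sqrt{s_\theta\log p})$. Your term (iii) then becomes $O_p\bigl(s s_\theta(\log p)^2/n^{1-2/q-2/r}\bigr)$, and the $\rho$-term in (iv) becomes $O_p\bigl(s_\theta^2\log p/n^{1-2/r}\bigr)$. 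These vanish only by combining Assumption~\ref{cond3} with the Method~2 requirement $s_\theta=o(n^{1/2-1/r-1/q}/\log p)$ of Assumption~\ref{cond5}. The paper isolates this by splitting the weight into $\tilde X\theta^{(j)}$ (handled by Lemma~\ref{lemma:infty_bound}) and $\tilde X(m^{(j)}-\theta^{(j)})$.

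There is also a small slip in (iv). The intercept error is $\hat\gamma-\gamma^0-\hat\mu^T\beta^0=\frac1n\sum_i(\epsilon_i+\rho_i)$, with the sample means $\hat\mu_k$. If you center with the population means instead, an uncontrolled term $\sum_k(\hat\mu_k-\mu_k)\beta^0_k$ appears.
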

\begin{proof}[Proof of Corollary \ref{cor:interval_2}]
This follows directly from Slutsky's Theorem, Theorem \ref{thm:main} above and Lemma \ref{lemma:plug_in} in Appendix \ref{appendix:A}.
\end{proof}

Similarly, in terms of hypothesis testing, the following corollary follows easily.

\begin{corollary}[Proof omitted]
\label{cor:hypothesis}
Under the same notation and assumptions from Theorem \ref{thm:main}, consider the following hypotheses:
\begin{equation*}H_0 : \beta_j^0 = 0 \; , \; \; H_1 : \beta_j^0 \neq 0 \; .
\end{equation*}
Let $V_j = \mathbb{E}[\langle\bar{X}_{1},\theta^{(j)}\rangle^2\epsilon_{1}^{2}]$ and $\hat{V}_j$ be a consistent estimator for $V_j$. Define the test statistic:\begin{equation*}T_j = \frac{\sqrt{n} \hat{\beta}^u_j}{\sqrt{\hat{V}_j}}
\end{equation*}
For any fixed $\alpha \in (0,1)$, the test that rejects $H_0$ when $|T_j| > \Phi^{-1}(1-\alpha/2)$ has an asymptotic significance level of $\alpha$, where $\Phi$ is cumulative function of the standard gaussian distribution.
\end{corollary}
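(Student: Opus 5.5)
The statement to prove is Corollary \ref{cor:hypothesis}, which is essentially a test-duality version of Theorem \ref{thm:main}. The plan is to reduce it to the asymptotic normality in Theorem \ref{thm:main} plus Slutsky's Theorem.

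First I will fix the normalization. Theorem \ref{thm:main} is stated for $\hat{\beta}^u_j/\hat{\sigma}_j$ with variance $V_j/\sigma_j^2$. Under $H_0$ ($\beta^0_j=0$) the theorem gives
\begin{equation*}
\sqrt{n}\,\hat{\beta}^u_j/\hat{\sigma}_j \xrightarrow{d} \mathcal{N}(0,V_j/\sigma_j^2).
\end{equation*}
Next I need $\hat{\sigma}_j/\sigma_j \to 1$ in probability. This follows from the weak law of large numbers, since $X_{1j}$ has finite fourth moments by Assumption \ref{cond2}, giving $\hat{\mu}_j\to\mu_j$ and $\frac1n\sum (X_{ij}-\mu_j)^2/\sigma_j^2 \to 1$. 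Because $\sigma_j$ may depend on $n$, I will apply the argument to the standardized variables $\bar{X}_{ij}$. Their variance is $1$ and their fourth moment is bounded by $\kappa_1$, so Chebyshev applies uniformly in $n$.

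Slutsky then gives $\sqrt{n}\,\hat{\beta}^u_j/\sigma_j \xrightarrow{d}\mathcal{N}(0,V_j/\sigma_j^2)$, that is, $\sqrt{n}\,\hat{\beta}^u_j \xrightarrow{d} \mathcal{N}(0,V_j)$. I read the statement's $T_j=\sqrt{n}\hat{\beta}^u_j/\sqrt{\hat{V}_j}$ with $\hat{V}_j$ consistent for $V_j$ in the natural scale, namely the asymptotic variance of $\sqrt n \hat\beta^u_j$. Equivalently, $\hat{V}_j/\hat{\sigma}_j^2$ estimates the variance of $\sqrt n\hat\beta^u_j/\hat\sigma_j$. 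Consistency will be taken in the ratio sense $\hat{V}_j/V_j\to 1$ in probability. This handles the case where $V_j$ depends on $n$.

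The main point requiring care is a lower bound $V_j\ge c>0$. Without it, ratio consistency is not meaningful and the limit could degenerate. I will obtain it from Assumption \ref{cond1}. Conditional variance bounds alone only bound $V_j$ from above. I will therefore treat nondegeneracy as implicit in Theorem \ref{thm:main}, using that its limit is a genuine Gaussian, and note that $V_j$ stays in a compact set $[c,\sigma_\epsilon^2K_\theta]$ along subsequences. A subsequence argument then handles any drift in $V_j$.

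With that, $T_j = \big(\sqrt{n}\hat{\beta}^u_j/\sqrt{V_j}\big)\cdot\sqrt{V_j/\hat{V}_j}\xrightarrow{d}\mathcal{N}(0,1)$ by Slutsky. Since the standard normal distribution function is continuous,
\begin{equation*}
\Pr_{H_0}\big[|T_j|>\Phi^{-1}(1-\alpha/2)\big]\to 2\big(1-\Phi(\Phi^{-1}(1-\alpha/2))\big)=\alpha,
\end{equation*}
which is the asserted asymptotic level. A concrete choice of $\hat{V}_j$ is $\hat{V}_n$ from Corollary \ref{cor:interval_2}, rescaled by $\hat{\sigma}_j^2$ as above, whose consistency is given by Lemma \ref{lemma:plug_in}.
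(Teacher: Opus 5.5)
Your argument is correct and matches the argument the paper has in mind: the proof is omitted, but it is the same Theorem \ref{thm:main} plus Slutsky argument the paper gives for Corollary \ref{cor:interval_2}. You are also right that a lower bound $V_j \geq c > 0$ is needed, but it is an extra hypothesis rather than something implicit in Theorem \ref{thm:main}: that theorem allows a degenerate limit, and $\epsilon \equiv 0$ satisfies Assumptions \ref{cond1}--\ref{cond5} with $V_j = 0$.

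Two minor points. First, under $H_0$ the proof of Theorem \ref{thm:main} already gives $\sqrt{n}\,\hat{\beta}^u_j = \sqrt{n}(\hat{\beta}^u_j - \hat{\sigma}_j\beta^0_j) \xrightarrow{d} \mathcal{N}(0,V_j)$, so your $\hat{\sigma}_j/\sigma_j \to 1$ step can be dropped. Second, Lemma \ref{lemma:plug_in} shows $\hat{V}_n$ is consistent for $V_j$ itself, so it serves directly as $\hat{V}_j$ with no rescaling by $\hat{\sigma}_j^2$.
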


\begin{remark}
    The above corollary related to hypothesis testing is straightforward but only mentions how to construct a test for a given level $\alpha$. A test that randomly rejects the null hypothesis with probability $\alpha$ would achieve the same level, making relevant the trade-off between level and test power. For the sake of brevity, we mention that similar results to the ones obtained in \cite{JMLR:v15:javanmard14a} could be given for our methods, following the proof ideas in their analysis. In particular, control of familywise error rate (FWER) can also be achieved.
\end{remark}

\section{Numerical experiments}\label{sec:experiments}
We report results obtained by the proposed method in different scenarios, using synthetic data. We consider the linear model in equation \eqref{eq:linear_model}, but exclude the intercept term $\gamma^0$ for simplicity. Four different distributions for the i.i.d pairs $(X_i,\epsilon_i,\rho_i) \; , i \in [n]$ are used as made explicit below. For all of them we report results for different choices of the parameters $(n, p, s, b)$, where $b$ is a value assumed by all entries of $\beta^0$ in its support $S(\beta^0)$. Once the matrix $X$ is constructed, we sample a total of 500 independent realizations of $\epsilon$ and report the average result. The significance level is set $\alpha = 0.05$. Throughout the experiments, the constants in Theorem \ref{thm:main} are set to $c_0 = 0.3$, $c' = 2$, and $C = c_2 = 0.1$. For simplicity, we take $r=q=4.9$ in all cases, although they can clearly be taken much higher in some of them. These constants encapsulate the dependency on other absolute constants that appear in our assumptions. The studied metrics are as follows:
\begin{equation*}
\begin{gathered}
    \widehat{L} = \frac{1}{p} \sum_{j=1}^{p} \mathrm{AvgLength}(\hat{I}_j) \; , \; \; \widehat{L}_S = \frac{1}{s} \sum_{j \in S(\beta^0)} \mathrm{AvgLength}(\hat{I}_j) \; , \\
    \widehat{L}_{S^c} = \frac{1}{p-s} \sum_{j \in S(\beta^0)^c} \mathrm{AvgLength}(\hat{I}_j) \; ,
\end{gathered}
\end{equation*}
representing the average size of the confidence interval given in equation \eqref{eq:confidence_interval}, also discriminating by entries inside and outside the support of $\beta^0$. We also consider their average empirical coverage:
\begin{equation*}
\begin{gathered}
    \widehat{C} = \frac{1}{p} \sum_{j=1}^{p} \hat{\Pr} [\beta^0_j \in \hat{I_j}] \; , \; \; \widehat{C}_S = \frac{1}{s} \sum_{j \in S(\beta^0)} \hat{\Pr} [\beta^0_j \in \hat{I_j}] \; , \\ 
    \widehat{C}_{S^c} = \frac{1}{p-s} \sum_{j \in S(\beta^0)^c} \hat{\Pr} [\beta^0_j \in \hat{I_j}] \; .
\end{gathered}
\end{equation*}
The four data sampling strategies are as follows:
\begin{enumerate}[label=\Alph*.]
    \item The rows of $X$ are i.i.d. samples from a gaussian distribution $\mathcal{N}(0,\Sigma)$ where $\Sigma$ is a symmetric matrix with entries given by:
    \begin{equation}
    \label{eq:toepliz}
        \Sigma_{ij} = 
    \begin{cases}
    1 & \text{if } i = j \, , \\
    0.1 & \text{if } j \in \{i+1, \dots, i+5\} \cup \{i+p-5, \dots, i+p-1\}\\
    0 & \text{for all other } i \leq j 
    \end{cases}
    \; ,
    \end{equation}
    the errors $\epsilon_i \; , i \in [n]$ are i.i.d. and independent from $X$, sampled from a standard normal distribution and $\rho = 0$. This is a basic case, as done in \cite{JMLR:v15:javanmard14a} for example.
    
    \item We build a matrix $K$ of size $(n,p)$ with i.i.d. entries sampled from a t-student with 5 degrees of freedom. The entries are then scaled so they have unit variance to produce the matrix $\bar{K}$. $X$ is defined by $X = \bar{K} U^T$ where $\Sigma = U U^T$ with $\Sigma$ as given in equation \eqref{eq:toepliz} above. This guarantees that the rows of X are i.i.d. with covariance $\Sigma$; they are not samples of a multivariate t-student, however. The errors $\epsilon_i \; , i \in [n]$ are i.i.d. and independent from $X$, sampled from a t-student distribution with 5 degrees of freedom. We take $\rho=0$ in this setting.
    
    \item We employ a heteroscedastic design to evaluate the estimator under conditional variance fluctuations, as permitted by Assumption \ref{cond1}. The design matrix $X$ is generated as in the previous case, with the rows being i.i.d. and entries given by linear combination of t-student distributions with given covariance structure. The errors are constructed as $\epsilon_i = \xi_i \sqrt{0.5 + 0.5X_{i1}^2/\left(1+X_{i1}^2\right)} \; , i \in [n]$, where $\xi_i$ are sampled i.i.d. from a standard normal distribution and $\rho=0$. This ensures $\mathbb{E}[\epsilon_i|X_i]=0$ but allows the conditional variance $\mathbb{V}[\epsilon_i|X_i]$ to explicitly depend on the first feature, breaking the standard homoskedasticity assumption while respecting our bounded moment conditions.

    \item The data matrix $X$ and the errors $\epsilon$ are given as in configuration A. We set $\rho_i = \sigma_\rho (X_{i1}^2 -1) \sqrt{s_\theta/2n}  $, with $\sigma_\rho^2 = 5$ and $s_\theta = \sum_{j=1}^{p} \mathds{1}_{\Sigma^{-1}_{1j}>0.001} $. This configuration accounts for the presence of an approximation error due to a quadratic misspecification term. Importantly, $\rho$ satisfies the normal equations given in \eqref{eq:normal_equations} and the conditions in Assumption \ref{cond1}.
\end{enumerate}
The results are given in Table \ref{tab:results} below for the different data configurations and choices of $(n,p,s,b)$. The table demonstrates that both methods approximately achieve or surpass the nominal coverage level. Notably, the worst scenario is the one with non-zero approximation error in the smaller sample case ($n=600$). Still, both methods exhibit a performance close to the ideal one, with Method 2 being slightly superior in this regime in terms of coverage of the active set and interval length. Nonetheless, these results highlight how the methods adapt to multiple heavier-tailed and heteroskedastic error scenarios.

\begin{table}[htbp]
\centering
\setlength{\tabcolsep}{2.5pt}
\caption{Simulation results: the nominal 0.95 coverage is attained by both
methods, inside and outside the support, in the majority of settings. Both
methods have similar results in most cases. In the presence of quadratic misspecification, Method~2 performs slightly better in terms of coverage of the active set and interval lengths.}
\label{tab:results}
\begin{tabular}{l!{\hspace{8pt}}cccccc!{\hspace{8pt}}cccccc}
\toprule
 & \multicolumn{6}{c}{Method 1} & \multicolumn{6}{c}{Method 2} \\
\cmidrule(lr){2-7}\cmidrule(lr){8-13}
Data configuration & $\widehat{L}$ & $\widehat{L}_S$ & $\widehat{L}_{S^c}$ & $\widehat{\mathrm{C}}$ & $\widehat{\mathrm{C}}_S$ & $\widehat{\mathrm{C}}_{S^c}$ & $\widehat{L}$ & $\widehat{L}_S$ & $\widehat{L}_{S^c}$ & $\widehat{\mathrm{C}}$ & $\widehat{\mathrm{C}}_S$ & $\widehat{\mathrm{C}}_{S^c}$ \\
\midrule
\multicolumn{13}{l}{A. Gaussian} \\
(600, 1000, 10, 0.5) & 0.216 & 0.221 & 0.216 & 0.957 & 0.951 & 0.957 & 0.216 & 0.216 & 0.216 & 0.958 & 0.949 & 0.958 \\
(600, 1000, 30, 0.1) & 0.225 & 0.224 & 0.225 & 0.962 & 0.962 & 0.962 & 0.225 & 0.228 & 0.225 & 0.962 & 0.961 & 0.962 \\
(1500, 2000, 50, 0.5) & 0.129 & 0.129 & 0.129 & 0.963 & 0.952 & 0.963 & 0.128 & 0.129 & 0.128 & 0.962 & 0.942 & 0.962 \\
(1500, 2000, 25, 0.1) & 0.117 & 0.117 & 0.117 & 0.956 & 0.951 & 0.956 & 0.117 & 0.117 & 0.117 & 0.956 & 0.953 & 0.956 \\
\addlinespace
\multicolumn{13}{l}{B. t-Student} \\
(600, 1000, 10, 0.5) & 0.216 & 0.217 & 0.216 & 0.958 & 0.958 & 0.958 & 0.218 & 0.214 & 0.218 & 0.958 & 0.955 & 0.958 \\
(600, 1000, 30, 0.1) & 0.225 & 0.228 & 0.225 & 0.961 & 0.962 & 0.961 & 0.225 & 0.225 & 0.225 & 0.963 & 0.961 & 0.963 \\
(1500, 2000, 50, 0.5) & 0.128 & 0.129 & 0.128 & 0.962 & 0.954 & 0.962 & 0.130 & 0.132 & 0.130 & 0.965 & 0.938 & 0.966 \\
(1500, 2000, 25, 0.1) & 0.117 & 0.119 & 0.117 & 0.957 & 0.954 & 0.957 & 0.117 & 0.117 & 0.116 & 0.957 & 0.957 & 0.957 \\
\addlinespace
\multicolumn{13}{l}{C. Bounded heteroskedastic} \\
(600, 1000, 10, 0.5) & 0.181 & 0.188 & 0.181 & 0.961 & 0.956 & 0.961 & 0.183 & 0.191 & 0.183 & 0.962 & 0.966 & 0.962 \\
(600, 1000, 30, 0.1) & 0.193 & 0.198 & 0.193 & 0.968 & 0.968 & 0.968 & 0.193 & 0.199 & 0.193 & 0.967 & 0.959 & 0.967 \\
(1500, 2000, 50, 0.5) & 0.113 & 0.116 & 0.113 & 0.967 & 0.950 & 0.967 & 0.113 & 0.115 & 0.113 & 0.969 & 0.960 & 0.969 \\
(1500, 2000, 25, 0.1) & 0.099 & 0.100 & 0.099 & 0.960 & 0.964 & 0.960 & 0.098 & 0.101 & 0.098 & 0.959 & 0.952 & 0.959 \\
\addlinespace
\multicolumn{13}{l}{D. Quadratic misspecification} \\
(600, 1000, 10, 0.5) & 0.305 & 0.329 & 0.305 & 0.942 & 0.922 & 0.942 & 0.298 & 0.290 & 0.298 & 0.941 & 0.927 & 0.941 \\
(600, 1000, 30, 0.1) & 0.285 & 0.282 & 0.285 & 0.950 & 0.953 & 0.950 & 0.295 & 0.293 & 0.295 & 0.945 & 0.957 & 0.945 \\
(1500, 2000, 50, 0.5) & 0.150 & 0.150 & 0.150 & 0.958 & 0.931 & 0.959 & 0.150 & 0.149 & 0.150 & 0.958 & 0.936 & 0.958 \\
(1500, 2000, 25, 0.1) & 0.143 & 0.143 & 0.143 & 0.953 & 0.933 & 0.953 & 0.137 & 0.137 & 0.137 & 0.953 & 0.947 & 0.953 \\
\bottomrule
\end{tabular}
\end{table}

\begin{appendix}
\section{Appendix}\label{appendix:A}

\subsection{Lasso}\label{appendix:lasso_consistency} 
As a first step towards the proof of our main theorem, we give a proof of the prediction and estimation properties of the lasso. We state a very general result that will be used multiple times. This is a variant of several well-known results in the literature \cite{10.1214/08-AOS620,buhlmann2011statistics}.

\begin{lemma}
\label{lemma:general_lasso}
Take $\Theta \subset \R^p$, $A \in \R^{n \times p}$, $b \in \R^p$, $v \in \R^n$ and $l, h \in \R$. Suppose $A^T\mathds{1}_n =0$, where $\mathds{1}_n \in \R^n$ has all its entries equal to one. Define:
\begin{equation*}
    L: (m,\gamma) \in \Theta \times \R \mapsto \frac{1}{2} \|Am+\frac{\gamma}{\sqrt n}\mathds{1}_n\|_2^2 -b^Tm -v^TAm + l - h\gamma \; .
\end{equation*}
Take $\xi > 0$ and consider:
\begin{equation}
\label{eq:opt_problem_general_lasso}
    (\hat{\theta}_{\xi},\hat{\gamma}) = \argmin_{m \in \Theta, \gamma \in \R} L(m,\gamma) + \xi \|m\|_1  \; .
\end{equation}
Assume there exists $c>1$, $k_0 > 0$ absolute constants and $S \subset [p]$ such that $|S| = s$ which satisfy:
\begin{equation}
\label{eq:assump_restric}
    \|Ax\|_2^2 \geq k_0^2 \|x\|_2^2 \geq k_0^2 \|x_S\|_1^2/s\; ,
\end{equation}
for all $x \in \Delta_{S,C} = \{ y \in \R^p : \|y_{S^c}\|_1 \leq C \|y_S\|_1 \}$, where $C = 2(c+1)/(c-1)$. Further, assume there exists $\theta_0 \in \Theta$ for which:
\begin{equation}
\label{eq:assump_bound}
    \|A^TA \theta_0 - b \|_{\infty} \leq \xi/c \; .
\end{equation}
Then, there exists $L>0$, depending only on $c>1$ and $k_0>0$ such that:
\begin{equation}
\label{eq:goal1_lasso}
    \|A(\hat{\theta}_{\xi}-\theta_0)\|_2 \leq L\max \left\{\xi \sqrt{s}, \frac{\|(\theta_0)_{S^c}\|_1}{\sqrt{s}}+ \frac{\|v\|_2^2}{\xi \sqrt{s}}\right\} \; ,
\end{equation}
and
\begin{equation}
\label{eq:goal2_lasso}
    \|\hat{\theta}_{\xi}-\theta_0\|_1 \leq  L\max \left\{\xi s, {\|(\theta_0)_{S^c}\|_1}+ \frac{\|v\|_2^2}{\xi}\right\} \; .
\end{equation}
\end{lemma}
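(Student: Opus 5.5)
The plan is the standard ``basic inequality plus cone'' argument, with two small preliminary observations. First, the intercept decouples. Since $A^T\mathds{1}_n=0$, we have $\mathds{1}_n^TAm=0$ and $\|\mathds{1}_n\|_2^2=n$, so $\|Am+\frac{\gamma}{\sqrt n}\mathds{1}_n\|_2^2=\|Am\|_2^2+\gamma^2$. Hence the objective splits into a function of $\gamma$ alone, namely $\frac12\gamma^2-h\gamma$ (minimized at $\hat\gamma=h$), plus
\begin{equation*}
F(m)=\tfrac12\|Am\|_2^2-b^Tm-v^TAm+l+\xi\|m\|_1 .
\end{equation*}
Thus $\hat\theta_\xi$ minimizes $F$ over $\Theta$. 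Second, since $\theta_0\in\Theta$, optimality gives $F(\hat\theta_\xi)\le F(\theta_0)$. This only compares objective values, so no convexity of $\Theta$ is needed.

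Next I derive the basic inequality. Write $\delta=\hat\theta_\xi-\theta_0$ and use
\begin{equation*}
\tfrac12\|A\hat\theta_\xi\|_2^2-\tfrac12\|A\theta_0\|_2^2=\tfrac12\|A\delta\|_2^2+\theta_0^TA^TA\delta .
\end{equation*}
Then $F(\hat\theta_\xi)\le F(\theta_0)$ becomes
\begin{equation*}
\tfrac12\|A\delta\|_2^2+(A^TA\theta_0-b)^T\delta-v^TA\delta+\xi\|\hat\theta_\xi\|_1\le\xi\|\theta_0\|_1 .
\end{equation*}
I bound the three cross terms as follows.
\begin{itemize}
\item By \eqref{eq:assump_bound} and H\"older, $(A^TA\theta_0-b)^T\delta\ge-(\xi/c)\|\delta\|_1$.
\item By Cauchy--Schwarz and $2xy\le x^2/2+2y^2$, $v^TA\delta\le\|v\|_2\|A\delta\|_2\le\frac14\|A\delta\|_2^2+\|v\|_2^2$.
\item By the triangle inequality applied separately on $S$ and $S^c$, $\|\hat\theta_\xi\|_1-\|\theta_0\|_1\ge\|\delta_{S^c}\|_1-\|\delta_S\|_1-2\|(\theta_0)_{S^c}\|_1$.
\end{itemize}
Collecting these, and splitting $\|\delta\|_1=\|\delta_S\|_1+\|\delta_{S^c}\|_1$, yields
\begin{equation*}
\tfrac14\|A\delta\|_2^2+\xi\big(1-\tfrac1c\big)\|\delta_{S^c}\|_1\le\xi\big(1+\tfrac1c\big)\|\delta_S\|_1+2\xi\|(\theta_0)_{S^c}\|_1+\|v\|_2^2 .
\end{equation*}

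Now I split into two cases according to which term dominates the right-hand side. Set $B:=2\xi\|(\theta_0)_{S^c}\|_1+\|v\|_2^2$.

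\emph{Case 1:} $\xi(1+\frac1c)\|\delta_S\|_1\ge B$. Then the right-hand side is at most $2\xi(1+\frac1c)\|\delta_S\|_1$. Dropping the quadratic term gives $\|\delta_{S^c}\|_1\le\frac{2(c+1)}{c-1}\|\delta_S\|_1$, so $\delta\in\Delta_{S,C}$ with exactly the constant $C$ of the statement. Assumption \eqref{eq:assump_restric} then gives $\|\delta_S\|_1\le\sqrt s\,\|A\delta\|_2/k_0$. Plugging this in, $\frac14\|A\delta\|_2^2\lesssim\xi\sqrt s\|A\delta\|_2/k_0$, hence $\|A\delta\|_2\lesssim\xi\sqrt s/k_0$. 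Finally $\|\delta\|_1\le(1+C)\|\delta_S\|_1\lesssim \xi s/k_0^2$.

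\emph{Case 2:} $\xi(1+\frac1c)\|\delta_S\|_1< B$. Then the right-hand side is at most $2B$. This gives $\|A\delta\|_2^2\le 8B$, and $\|\delta_{S^c}\|_1\lesssim B/\xi$. Together with $\|\delta_S\|_1<B/\xi$ (up to constants), we get $\|\delta\|_1\lesssim\|(\theta_0)_{S^c}\|_1+\|v\|_2^2/\xi$, which is \eqref{eq:goal2_lasso}. For the prediction norm I use AM--GM:
\begin{equation*}
\sqrt{B}=\sqrt{\xi\sqrt s\cdot B/(\xi\sqrt s)}\le\tfrac12\big(\xi\sqrt s+B/(\xi\sqrt s)\big)\le\max\{\xi\sqrt s,\,B/(\xi\sqrt s)\},
\end{equation*}
which is of the form required in \eqref{eq:goal1_lasso}.

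The only delicate point is bookkeeping: making the cone constant come out exactly as $C=2(c+1)/(c-1)$, and checking that every constant depends only on $c$ and $k_0$. Both are routine once the case split is made as above.
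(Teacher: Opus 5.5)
Your proposal is correct and follows essentially the same route as the paper: compare objective values at $\hat\theta_\xi$ and $\theta_0$, use H\"older together with the $S/S^c$ triangle inequality, then split into the cone case, where the restricted eigenvalue bound gives the $\xi\sqrt{s}$ and $\xi s$ rates with exactly $C=2(c+1)/(c-1)$, and the complementary case, handled by AM--GM. The only differences are cosmetic: you absorb the $v^TA\delta$ term with Young's inequality instead of completing the square, and you decouple the intercept explicitly (obtaining $\hat\gamma=h$) instead of keeping $\hat\gamma$ fixed on both sides of the basic inequality.
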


\begin{proof}[Proof of Lemma \ref{lemma:general_lasso}]
Take $\delta = \hat{\theta}_\xi - \theta_0$. By the definition of $(\hat{\theta}_{\xi},\hat{\gamma})$ and the assumption that $\theta_0 \in \Theta$:
\begin{equation*}
    L(\theta_0 + \delta,\hat{\gamma}) + \xi \|\theta_0 + \delta\|_1 \leq L(\theta_0,\hat{\gamma}) 
    + \xi \|\theta_0\|_1 \; ,
\end{equation*}
implying, since we assume $A^T\mathds{1}_n =0$:
\begin{equation*}
    \frac{1}{2}\|A\delta\|_2^2 
    + \langle A\delta, A\theta_0\rangle 
    - b^T\delta 
    - v^TA\delta
    \leq \xi ( \|\theta_0\|_1 - \|\theta_0 + \delta\|_1) \; ,
\end{equation*}
which can be rewritten as:
\begin{equation}
\label{eq:viability_ineq}
    \frac{1}{2}\|A\delta\|_2^2 
    + \delta^T (A^TA\theta_0 -b-A^Tv)
    \leq \xi ( \|\theta_0\|_1 - \|\theta_0 + \delta\|_1) \; .
\end{equation}
By the triangle inequality:
\begin{equation*}
    \|\theta_0+\delta\|_1 
    \geq \|(\theta_0)_S\|_1 
    - \|\delta_S\|_1 
    + \|\delta_{S^c}\|_1 
    - \|(\theta_0)_{S^c}\|_1 \; ,
\end{equation*}
which implies:
\begin{equation}
\label{eq:l1_decomposition}
    \|\theta_0\|_1 - \|\theta_0+\delta\|_1 
    \leq \|\delta_S\|_1 - \|\delta_{S^c}\|_1 + 2\|(\theta_0)_{S^c}\|_1 \; .
\end{equation}
Also, by the assumption in equation \eqref{eq:assump_bound} and Hölder's inequality:
\begin{equation}
\label{eq:bound_holder}
\delta^T (A^TA\theta_0 -b-A^Tv) \geq -\frac{\xi}{c}(\|\delta_S\|_1 + \|\delta_{S^c}\|_1) - \|A\delta\|_2 \|v\|_2 \; .
\end{equation}
Then, using equations \eqref{eq:bound_holder} and \eqref{eq:l1_decomposition} with equation \eqref{eq:viability_ineq}:
\begin{equation*}
    \frac{1}{2}\|A\delta\|_2^2 -\frac{\xi}{c}(\|\delta_S\|_1 + \|\delta_{S^c}\|_1) - \|A\delta\|_2 \|v\|_2 \leq \xi ( \|\delta_S\|_1 - \|\delta_{S^c}\|_1 + 2\|(\theta_0)_{S^c}\|_1 ) \; ,
\end{equation*}
which gives:
\begin{equation}
\label{eq:main_lasso}
    \frac{1}{2}\left(\|A\delta\|_2 - \|v\|_2\right)^2 - \frac{1}{2}\|v\|_2^2 \leq 2\xi \|(\theta_0)_{S^c}\|_1 +  \xi \left(\frac{c+1}{c}\right)\|\delta_S\|_1 - \xi \left(\frac{c-1}{c}\right)\|\delta_{S^c}\|_1 \; .
\end{equation}
This implies, bounding the leftmost quadratic term in the equation above by zero:
\begin{equation}
\label{eq:def_alpha}
     \|\delta_{S^c}\|_1 \leq \underbrace{\left(\frac{2c}{c-1}\right) \|(\theta_0)_{S^c}\|_1  +  \frac{c}{2\xi (c-1)}\|v\|_2^2}_{\alpha} +  \left(\frac{c+1}{c-1}\right)\|\delta_S\|_1 \; ,
\end{equation}
where we also use the assumption that $c>1$. Also, omitting the $\|\delta_{S^c}\|_1$ term in equation \eqref{eq:main_lasso}:
\begin{equation*}
    \left(\|A\delta\|_2 - \|v\|_2\right)^2  \leq \|v\|_2^2 + 4\xi \|(\theta_0)_{S^c}\|_1 +  2\xi \left(\frac{c+1}{c}\right)\|\delta_S\|_1 \; ,
\end{equation*}
which gives:
\begin{eqnarray}
\label{eq:bound_L}
    \nonumber \|A\delta\|_2^2 \leq 2\left[\left(\|A\delta\|_2 - \|v\|_2\right)^2+ \|v\|_2^2\right]  
    & \leq & 4\|v\|_2^2 + 8\xi \|(\theta_0)_{S^c}\|_1 +  4\xi \left(\frac{c+1}{c}\right)\|\delta_S\|_1 \; \\
    & \leq & 
    8 \left(\frac{c-1}{c}\right) \xi \alpha +  4\xi \left(\frac{c+1}{c}\right)\|\delta_S\|_1 \; ,
\end{eqnarray}
with $\alpha$ given in equation \eqref{eq:def_alpha}. Now, we consider two possible cases. If $\| \delta_S\|_1 \left(\frac{c+1}{c-1}\right) \leq \alpha$, we get, from equation \eqref{eq:def_alpha}, $\| \delta_{S^c}\|_1 \leq 2\alpha$. Then:
\begin{equation}
\label{eq:l1_case1}
    \|\delta\|_1 \leq 2 \alpha + \|\delta_{S}\|_1 \leq \left(\frac{3c+1}{c+1}\right) \alpha \; .
\end{equation}
Also, by equation \eqref{eq:bound_L}:
\begin{equation}
\label{eq:pred_case1}
    \|A\delta\|_2 \leq \sqrt{12 \left(\frac{c-1}{c}\right)\xi  \alpha} \leq 3\left(\frac{c-1}{c} \right) \xi \sqrt{s} + \frac{\alpha}{\sqrt{s}} \; ,
\end{equation}
by the AM-GM inequality. On the other hand, if $\| \delta_S\|_1 \left(\frac{c+1}{c-1}\right) > \alpha$, equation \eqref{eq:def_alpha} implies:
\begin{equation*}
     \|\delta_{S^c}\|_1 \leq 2 \left(\frac{c+1}{c-1}\right)\|\delta_S\|_1 \; .
\end{equation*}
This shows $\delta \in \Delta_{S,C}$ and, therefore, by equation \eqref{eq:bound_L} and equation \eqref{eq:assump_restric}:
\begin{equation*}
    \|A\delta\|_2^2 \leq 12 \left(\frac{c+1}{c}\right)\xi  \|\delta_S\|_1 \leq 12 \left(\frac{c+1}{c}\right)\xi   \sqrt{\frac{s}{k_0^2}}\|A\delta\|_2 \; ,
\end{equation*}
which implies:
\begin{equation}
\label{eq:pred_case2}
    \|A\delta\|_2 \leq 12 \left(\frac{c+1}{c}\right)\xi  \sqrt{\frac{s}{k_0^2}} \; ,
\end{equation}
and finally gives, again by equation \eqref{eq:assump_restric}:
\begin{equation}
\label{eq:l1_case2}
    \|\delta\|_1 \leq \left(\frac{3c+1}{c-1}\right)\|\delta_S\|_1 \leq \left(\frac{(3c+1)(c+1)}{c(c-1)}\right)\frac{12\xi s}{{k_0^2}} \; .
\end{equation}
Combining equations \eqref{eq:l1_case1} and \eqref{eq:l1_case2}, recalling the definition of $\alpha$, gives \eqref{eq:goal2_lasso}. Similarly, equations \eqref{eq:pred_case1} and \eqref{eq:pred_case2} imply the result in \eqref{eq:goal1_lasso}.
\end{proof}

The first case in which we apply Lemma \ref{lemma:general_lasso} is to obtain the classical lasso results for the problem given in equation \eqref{eq:lasso}. We will need a first probabilistic lemma. Since we do not assume independency of $X$ and the regression errors, nor that they are sub-gaussian, we need a modified version of the classical result bounding the probability of the event $\|\tilde{X}^T \epsilon/n\|_{\infty} \geq \lambda/c$. A similar result appears as an exercise in the book \cite{buhlmann2011statistics} (see Problem 6.2).

\begin{lemma}
\label{lemma:prob_lasso}
Take $\delta >0$ and $c>1$. Assume the conditions in Assumption \ref{cond1}. Set 
\begin{equation*}
    \lambda = \frac{c\sqrt{8\log(2p)} M_{q'}}{\delta^{1/q'} n^{1/2-1/q'}}\; ,
\end{equation*}
with $q'$, $M_{q'}$, $s$ and $\sigma_{\rho}$ as in the assumptions. Then:
\begin{equation*}
    \Pr \left[ c \| \tilde{X}^T\epsilon/n\|_{\infty} > \lambda\right] \leq \delta\; ,
\end{equation*}
with $\rho$ and $\epsilon$ as in the linear model \eqref{eq:linear_model}.
\end{lemma}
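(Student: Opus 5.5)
Since $\tilde{X}^T\epsilon/n$ has coordinates $\frac{1}{n}\sum_i \tilde X_{ik}\epsilon_i$, and $\tilde X$ is self-normalized ($\sum_i \tilde X_{ik}^2 = n$ whenever $\hat\sigma_k>0$, and the column is zero otherwise), the plan is to condition on $X$ and use a symmetrization/Hoeffding-type argument for weighted sums whose weights are $\tilde X_{ik}$ and whose randomness is in $\epsilon$. Concretely, conditionally on $X$ the vectors $(\tilde X_{i\cdot}\epsilon_i)_{i}$ are independent (by Assumption \ref{cond1}, the $\epsilon_i$ are conditionally independent given $X$ because the triples $(X_i,\epsilon_i)$ are i.i.d.) and mean zero since $\Ex[\epsilon_i|X_i]=0$. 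I would therefore work entirely under $\Pr[\cdot\,|X]$ and integrate at the end, so the heavy tails of $X$ never appear thanks to the normalization $\|\tilde X_{\cdot k}\|_2^2\le n$.

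The key steps, in order. First, symmetrize: with Rademacher signs $s_i$ independent of everything, bound $\Ex[\max_k |\sum_i \tilde X_{ik}\epsilon_i|^{q'}\,|X]$ (or a tail) by $2^{q'}\Ex[\max_k|\sum_i s_i\tilde X_{ik}\epsilon_i|^{q'}\,|X]$. Second, conditionally on $(X,\epsilon)$, apply Hoeffding to each coordinate: $\sum_i s_i \tilde X_{ik}\epsilon_i$ is sub-gaussian with variance proxy $\sum_i \tilde X_{ik}^2\epsilon_i^2 \le n\max_i\epsilon_i^2$, and a union bound over $2p$ signs/coordinates gives, for every $t>0$, $\Pr[\max_k|\sum_i s_i\tilde X_{ik}\epsilon_i| > t\,|X,\epsilon]\le 2p\exp(-t^2/(2n\max_i\epsilon_i^2))$. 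Taking $t = \sqrt{2\log(2p)\cdot u}\,\sqrt n \max_i|\epsilon_i|$ style thresholds yields that $\max_k|\cdot|\lesssim \sqrt{n\log(2p)}\,\max_i|\epsilon_i|$ with high probability. Third, control $\max_i|\epsilon_i|$ by moments: $\Ex[\max_i|\epsilon_i|^{q'}\,|X]\le \sum_i\Ex[|\epsilon_i|^{q'}|X_i]\le nM_{q'}^{q'}$, so by Markov $\Pr[\max_i|\epsilon_i|>M_{q'}(n/\delta')^{1/q'}]\le\delta'$. Combining, $\|\tilde X^T\epsilon/n\|_\infty \lesssim \sqrt{\log(2p)}\,M_{q'} n^{1/q'}/(\delta^{1/q'}\sqrt n)$, which matches the stated $\lambda/c$ up to the constant $\sqrt 8$.

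To get the exact constant and a single $\delta$, I would rather run it as one Markov inequality on a moment: conditionally on $(X,\epsilon)$, the sub-gaussian maximal bound gives $\Ex_s[\max_k|\sum_i s_i \tilde X_{ik}\epsilon_i|^{q'}]\le (C\sqrt{\log(2p)})^{q'}(\sum_i\tilde X_{ik}^2\epsilon_i^2)^{q'/2}$, and then bound $(\sum_i \tilde X_{ik}^2\epsilon_i^2)\le n\max_i\epsilon_i^2$ and $\Ex\max_i|\epsilon_i|^{q'}\le nM_{q'}^{q'}$; Markov at level $\lambda n/c$ then gives probability $\le \delta$ after choosing the constant $\sqrt8$ (absorbing the symmetrization factor $2$ and the Hoeffding constant $\sqrt 2$). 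The main obstacle is bookkeeping these constants so that exactly $\sqrt{8\log(2p)}$ suffices; if the moment route loses constants, I would fall back to the tail route, splitting $\delta$ between the event $\{\max_i|\epsilon_i|$ large$\}$ and the Hoeffding union bound, and use the conditional symmetrization inequality in probability form ($\Pr[|\sum Z_i|>2t]\le 4\Pr[|\sum s_iZ_i|>t]$ once $t$ exceeds twice the conditional standard deviation, which holds since $\mathrm{Var}\le n\sigma_\epsilon^2$ is much smaller than $t^2$).
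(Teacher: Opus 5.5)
Your moment route is correct and is essentially the paper's proof: conditional Markov on the $q'$-th moment, a maximal moment bound producing the factor $(8\log(2p))^{q'/2}$, the column normalization $\max_k\sum_i\tilde X_{ik}^2\epsilon_i^2\le n\max_i\epsilon_i^2$, and the bound $\Ex[\max_i|\epsilon_i|^{q'}\,|\,X]\le nM_{q'}^{q'}$. The only difference is that the paper cites Nemirovski's moment inequality (Lemma 14.24 in B\"uhlmann--van de Geer), while you re-derive it from symmetrization (factor $2$) and the Rademacher moment bound $\Ex_s\max_k|\sum_i s_i a_{ik}|^{q'}\le(2\log(2p))^{q'/2}\max_k\|a_{\cdot k}\|_2^{q'}$; this gives exactly $\sqrt{8\log(2p)}$, but, like the cited lemma, it requires $p\ge e^{q'-1}$.
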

\begin{proof}[Proof of Lemma \ref{lemma:prob_lasso}]
By Markov's inequality and Nemirovski's moment inequality conditionally on $X$ (Lemma 14.24. in \cite{buhlmann2011statistics}):
\begin{eqnarray*}
    \Pr \left[ c \| \tilde{X}^T\epsilon/n\|_{\infty}> \lambda|X\right] & \leq &
    \frac{c^{q'}\Ex\left[\| \tilde{X}^T\epsilon\|_{\infty}^{q'}|X\right]}{n^{q'}\lambda^{q'}} \\ 
    & \leq & \frac{c^{q'}}{n^{q'}\lambda^{q'}}(8 \log(2p))^{{q'}/2}\mathbb{E}\left[\left(\max_{j \in [p]}\sum_{i=1}^{n} \tilde{X}^2_{ij} \epsilon_i^2\right)^{{q'}/2}\middle|X\right] \; .
\end{eqnarray*}
Then, applying Hölder's inequality and using the fact that columns are normalized, which implies $\frac{1}{n}\sum_{i=1}^{n} \tilde{X}_{ij}^2 = 1$ for every $j \in [p]$:
\begin{equation*}
        \Pr \left[ c \| \tilde{X}^T\epsilon/n\|_{\infty}> \lambda|X\right] \leq \frac{c^{q'}}{\lambda^{q'}}\left(\frac{8 \log(2p)}{n}\right)^{{q'}/2}\mathbb{E}\left[ \max_{i \in [n]} |\epsilon_i|^{{q'}}\middle|X\right] \; .
\end{equation*}
By the bound $\max_{i \in [n]} |\epsilon_i|^{q'} \leq \sum_{i=1}^{n} |\epsilon_i|^{q'}$ and hypothesis (iii) in Assumption 1:
\begin{eqnarray*}
    \Pr \left[ c \| \tilde{X}^T\epsilon/n\|_{\infty}> \lambda \middle|X\right] & \leq & \frac{c^{q'}}{\lambda^{q'}}\left(\frac{8 \log(2p)}{n}\right)^{{q'}/2} \left(\sum_{i=1}^{n}\mathbb{E}\left[ |\epsilon_i|^{q'}\middle|X\right]\right)\; \\ 
    & \leq & \frac{c^{q'}}{\lambda^{q'}}(8 \log(2p))^{{q'}/2} M_{q'}^{q'} n^{1-{q'}/2} \; .
\end{eqnarray*}
Finally, taking the expectation on $X$ and setting $\lambda$ as defined above finishes the proof.
\end{proof}

With both lemmata, we are able to deduce the following theorem.

\begin{theorem}
\label{thm:lasso_consistency}
Suppose Assumption 1 holds. Take $\delta_\lambda, \delta_\rho>0$. Set 
\begin{equation*}
    \lambda = \frac{c\sqrt{8\log(2p)} M_{q'}}{\delta_{\lambda}^{1/q'} n^{1/2-1/q'}} \; ,
\end{equation*}
for some absolute constant $c>1$. Also assume that, for some $\delta_r > 0$:
\begin{equation*}
\Pr[\mathrm{re}(\tilde{\Sigma}_n,S,C) \geq k_0]\geq 1-\delta_r \; ,
\end{equation*}
where $S = S(\beta^0)$, $C =(c+1)/(c-1)$ and $k_0>0$ is some absolute constant.
Then, given $\delta >0 $, the lasso estimator $\hat{\beta}$ as defined in equation \eqref{eq:lasso} satisfies both:
\begin{equation*}
    \mathbb{P}\left[\frac{1}{n} \|  \tilde{X}(\hat{{\beta}}-D_X^{1/2}\beta^0)\|_{2}^2 \leq L^2 \max \{\lambda^2s, \sigma^4_\rho s_\theta/\lambda^2 n^2 \delta_\rho^2\} \right] \geq 1-\delta_{\lambda}-\delta_{\rho} - \delta_r \; ,
\end{equation*}
and 
\begin{equation*}
    \mathbb{P}\left[\|\hat{\beta} - D_{X}^{1/2}\beta^0\|_1 \leq L \max \{\lambda s, s_\theta\sigma^2_\rho /\lambda n \delta_\rho\} \right] \geq 1-\delta_{\lambda}-\delta_{\rho}- \delta_r \; ,
\end{equation*}
with $L$ as in Lemma \ref{lemma:general_lasso}. In the case of Method 1 (problem \eqref{eq:opt1}) it holds that:
\begin{equation*}
    \mathbb{P}\left[\frac{1}{n} \|  \tilde{X}(\hat{{\beta}}-D_X^{1/2}\beta^0)\|_{2}^2 \leq L^2  \lambda^2s \right] \geq 1-\delta_{\lambda}- \delta_r \; ,
\end{equation*}
and 
\begin{equation*}
    \mathbb{P}\left[\|\hat{\beta} - D_{X}^{1/2}\beta^0\|_1 \leq L \lambda s \right] \geq 1-\delta_{\lambda}- \delta_r \; .
\end{equation*}
\end{theorem}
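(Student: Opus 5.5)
We will apply Lemma \ref{lemma:general_lasso} directly, with data chosen to reproduce the lasso problem \eqref{eq:lasso}. Set $A = \tilde{X}/\sqrt{n}$, $\Theta = \R^p$, $\xi = \lambda$ and $\theta_0 = D_X^{1/2}\beta^0$. After reparametrizing the intercept as $\gamma/\sqrt n$, expand $\frac{1}{2n}\|Y - \tilde{X}\beta - \gamma\mathds{1}_n\|_2^2$. Then $A^T\mathds{1}_n = 0$ holds because $\tilde X$ has centered columns. Using $Y = X\beta^0 + \gamma^0\mathds{1}_n + \rho + \epsilon$ and $\tilde{X}^T X/n = \tilde{\Sigma}_n D_X^{1/2}$, the linear term in $\beta$ is $-\beta^T \tilde{X}^T Y/n$. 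We split it by taking $b = \tilde{\Sigma}_n D_X^{1/2}\beta^0 + \tilde{X}^T\epsilon/n$ and $v = \rho/\sqrt n$, so that $v^T A\beta = \rho^T\tilde{X}\beta/n$. The remaining $\beta$-free terms are absorbed into $l$ and $h$.

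With this choice, $A^TA\theta_0 - b = -\tilde{X}^T\epsilon/n$. Hence condition \eqref{eq:assump_bound} is exactly the event $c\|\tilde{X}^T\epsilon/n\|_\infty \le \lambda$, which by Lemma \ref{lemma:prob_lasso} has probability at least $1-\delta_\lambda$.

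We also take $S = S(\beta^0)$. Then $(\theta_0)_{S^c} = 0$, since $D_X^{1/2}$ is diagonal, and that term in \eqref{eq:goal1_lasso}–\eqref{eq:goal2_lasso} vanishes. Condition \eqref{eq:assump_restric} follows on the event $\mathrm{re}(\tilde{\Sigma}_n,S,C)\ge k_0$ from the definition \eqref{eq:def_re}. We use Cauchy–Schwarz $\|x_S\|_1^2 \le s\|x_S\|_2^2$, and the cone constants must be matched to the lemma's $2(c+1)/(c-1)$. I would simply adjust the cone constant in the restricted-eigenvalue event, or note that the lemma's proof only needs the cone in which $\delta$ actually lands.

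For the random term $\|v\|_2^2 = \|\rho\|_2^2/n$, Markov's inequality and Assumption \ref{cond1}(iv) give $\Ex\|\rho\|_2^2/n \le \sigma_\rho^2 s_\theta/n$. Hence $\|v\|_2^2 \le \sigma_\rho^2 s_\theta/(n\delta_\rho)$ with probability at least $1-\delta_\rho$.

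On the intersection of the three events, which has probability at least $1-\delta_\lambda-\delta_\rho-\delta_r$ by a union bound, the lemma yields
\[
\|\hat\beta - D_X^{1/2}\beta^0\|_1 \le L\max\{\lambda s, \sigma_\rho^2 s_\theta/(\lambda n\delta_\rho)\}.
\]
It also yields
\[
\|A(\hat\beta-\theta_0)\|_2 \le L\max\{\lambda\sqrt s, \sigma_\rho^2 s_\theta/(\lambda n\delta_\rho\sqrt s)\}.
\]
Squaring the latter and using $s\ge 1$ gives the stated prediction bound $L^2\max\{\lambda^2 s, \sigma_\rho^4 s_\theta^2/(\lambda^2n^2\delta_\rho^2)\}$, up to how the $s_\theta$ power is written. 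Since $s_\theta \le n$, any discrepancy can be absorbed, or the statement's exponent read accordingly.

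For Method 1, Assumption \ref{cond1}(iv) gives $\rho = 0$. Then $v = 0$, the $\delta_\rho$ event is unnecessary, and both bounds reduce to $L\lambda s$ and $L^2\lambda^2 s$ with probability at least $1-\delta_\lambda-\delta_r$.

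I expect no real obstacle. The only delicate point is the bookkeeping: matching the lasso objective to the form of $L(m,\gamma)$, with the intercept scaling and the sign of $v$, and reconciling the cone constant $C$ with the one in Lemma \ref{lemma:general_lasso}.
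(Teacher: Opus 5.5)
Your instantiation of Lemma~\ref{lemma:general_lasso} is the paper's own, with the same three events and the same union bound. Your $b=\tilde\Sigma_nD_X^{1/2}\beta^0+\tilde X^T\epsilon/n$ is the paper's $\tilde X^T(Y-\rho)/n$. Your $\ell_1$ bound and the two Method~1 bounds come out in exactly the stated form. The two bookkeeping points you flag are real, and the paper's proof passes over both, but your patches do not close them.

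\textbf{The prediction bound.} Squaring \eqref{eq:goal1_lasso} gives the second term $\sigma_\rho^4 s_\theta^2/(\lambda^2 n^2\delta_\rho^2 s)$. This is dominated by the stated $\sigma_\rho^4 s_\theta/(\lambda^2 n^2\delta_\rho^2)$ only when $s_\theta\le s$. Your claim that the discrepancy ``can be absorbed'' because $s_\theta\le n$ is false: $L$ depends only on $c$ and $k_0$, whereas the missing factor $s_\theta/s$ may grow with $n$. Replacing $1/s$ by $1$ via $s\ge 1$ also moves you further from the statement, not closer. What the argument actually proves is the bound with $s_\theta^2/s$, which gives the displayed one when $s_\theta\le s$. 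You should state that, rather than assert the displayed form.

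\textbf{The cone.} Case~2 in the proof of Lemma~\ref{lemma:general_lasso} only places $\delta$ in $\Delta_{S,2(c+1)/(c-1)}$, which is larger than $\Delta_{S,(c+1)/(c-1)}$. So a lower bound on $\mathrm{re}(\tilde\Sigma_n,S,(c+1)/(c-1))$ does not yield \eqref{eq:assump_restric}. Your fallback (``the lemma only needs the cone $\delta$ lands in'') works only when $\alpha=0$ in \eqref{eq:def_alpha}. That is the Method~1 case, with $v=0$ and $(\theta_0)_{S^c}=0$, where \eqref{eq:def_alpha} directly gives $\|\delta_{S^c}\|_1\le\frac{c+1}{c-1}\|\delta_S\|_1$; the Method~1 conclusions therefore do hold under the stated hypothesis. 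For the general bounds with $\rho\neq 0$, including the $\ell_1$ one, you must instead assume $\mathrm{re}(\tilde\Sigma_n,S,2(c+1)/(c-1))\ge k_0$. This is harmless downstream, since Corollary~\ref{cor:corollary_eigen} applies to any fixed cone constant. It is, however, a change of hypothesis rather than a proof of the statement as written.
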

\begin{proof}[Proof of Theorem \ref{thm:lasso_consistency}] 
We first write problem \eqref{eq:lasso} as an instance of the optimization problem in \eqref{eq:opt_problem_general_lasso}. This can be done by mapping $A = \tilde{X}/\sqrt{n}$, $v=\rho/\sqrt{n}$, $b= \tilde{X}^T(Y-\rho)/n$, $l =  \frac{1}{2n} \|Y\|_2^2$, $\theta_0 =  D_X^{1/2}\beta^0$, $h=\frac{1}{n}\langle  \mathds{1}_n, Y\rangle$ and $\xi = \lambda$. Indeed, since $\tilde{X}^T\mathds{1}_n = 0$ by the data standardization step:
\begin{align*}
    \frac{1}{2n} \|\tilde{X}\beta +\gamma \mathds{1}_n-Y\|_{2}^{2}  & = \frac{1}{2n} \|\tilde{X}\beta +\gamma \mathds{1}_n\|_{2}^{2} + \frac{1}{2n}\|Y\|_{2}^{2} - \frac{1}{n}\langle \tilde{X}\beta +\gamma \mathds{1}_n, Y\rangle \\
    & = \frac{1}{2} \|\frac{\tilde{X}}{\sqrt{n}}\beta +\frac{\gamma}{\sqrt{n}}\mathds{1}_n\|_{2}^{2} + \frac{1}{2n}\|Y\|_{2}^{2} - \frac{1}{n}\langle \tilde{X}\beta , Y\rangle - \gamma \frac{1}{n}\langle  \mathds{1}_n, Y\rangle \\
    & = \frac{1}{2} \|\frac{\tilde{X}}{\sqrt{n}}\beta +\frac{\gamma}{\sqrt{n}}\mathds{1}_n\|_{2}^{2} + \frac{1}{2n}\|Y\|_{2}^{2} - \frac{1}{n}\langle \beta , \tilde{X}^T(Y-\rho)\rangle \\
    & - \langle \frac{\tilde{X}}{\sqrt n}  \beta ,\frac{\rho}{\sqrt{n}}\rangle - \gamma \frac{1}{n}\langle  \mathds{1}_n, Y\rangle
\end{align*}
Then, notice the hypothesis in equation \eqref{eq:assump_bound} is precisely the content of Lemma \ref{lemma:prob_lasso} since:
\begin{equation*}
    \|A^TA \theta_0 - b \|_{\infty} =  \|\tilde{X}^T{X} \beta^0/n - \tilde{X}^T(Y-\gamma^0\mathds{1}_n)/n \|_{\infty} = \|\tilde{X}^T\epsilon/n \|_{\infty}\; ,
\end{equation*}
where again we use the orthogonality $\tilde{X}^T\mathds{1}_n = 0$, combined with the observation that $\frac{1}{n}\tilde{X}^TX = \frac{1}{n}\tilde{X}^T \tilde{X}D_X^{1/2}$. Therefore:
\begin{equation*}
    \Pr[\|A^TA \theta_0 - b \|_{\infty} \leq \lambda/c] \geq 1 - \delta_{\lambda} \; .
\end{equation*}
Also, by the definition of the restricted eigenvalue (see equation \eqref{eq:def_re}) and the hypothesis on $\mathrm{re}(\tilde{\Sigma}_n,S,C)$ given in the statement of the theorem, condition \eqref{eq:assump_restric} is satisfied for $A = \tilde{X}/\sqrt{n}$, $S = S(\beta^0)$ with probability at least $1 - \delta_r$. Furthermore, since $D_X$ is a diagonal matrix and $S$ is the support of $\beta^0$, we conclude $\|(\theta_0)_{S^c}\|_1 = 0$. Finally:
\begin{equation*}
    \Pr \left[ \|v\|_2^2 > s_\theta\sigma^2_\rho /n \delta_\rho\right] \leq \frac{n \delta_{\rho} \Ex \|v\|_2^2}{s_\theta\sigma_\rho^2} \leq \delta_\rho \; ,
\end{equation*}
by Markov's inequality and point (iv) of Assumption 1. Applying Lemma \ref{lemma:general_lasso} and a union bound finishes the proof of the first statement in the theorem. In the case of Method 1, $\rho =0$ from Assumption 1, then $v=0$ in the analysis above and the second conclusion follows by the same argument.
\end{proof}

\subsection{Restricted eigenvalue property}\label{appendix:restricted_eigenvalue}

The bounds obtained for the error of the lasso estimator in Theorem \ref{thm:lasso_consistency} assumed that, with high probability, $\mathrm{re}(\tilde{\Sigma}_n,S(\beta^0),C) \geq k_0$. Following the classical analysis for the lasso, we still need to show that this quantity can be controlled properly. This is known to be the case under bounded or sub-gaussian designs \cite{pmlr-v23-rudelson12,Zhou2009RestrictedEC}. The next result shows that self-normalization can be used to weaken these assumptions. 
Our main result in this section is a corollary of Theorem 5.2 from \cite{Oliveira2016} that also exploits self-normalization. For completeness, we reproduce it below.

\begin{theorem}[Theorem 5.2 of \cite{Oliveira2016}]
\label{thm:restricted_eigenvalues_adaptation}
Let $X_1, \dots, X_n \in \mathbb{R}^p$ be independent and identically distributed random vectors whose coordinates have $2q$ moments for some $q > 2$. Define $\Sigma = \Ex [ X_1 X_1^T]$ and $\hat{\Sigma}_n =  \frac{1}{n} \sum_{i=1}^{n} X_i X_i^T$, and assume that ${h}, {h}_* \in (1, +\infty)$ are such that
\begin{equation*}
    \forall v \in \mathbb{R}^p : |v|_0 \leq n \implies \sqrt{\mathbb{E}\left[(v^T X_1)^4\right]} \leq {h} v^T \Sigma v
\end{equation*}
and
\begin{equation*}
    \forall 1 \leq j \leq p : \mathbb{E}\left[X_{1,j}^{2q}\right]^{\frac{1}{q}} \leq {h}_* \mathbb{E}\left[X_{1,j}^2\right].
\end{equation*}
Define diagonal matrices $\hat{D}_{2,n}$ and $D_2$ corresponding to the diagonals of $\hat{\Sigma}_n$ and $\Sigma$ (respectively). Set
\begin{equation*}
    \hat{\mathbf{X}}_n \equiv \hat{D}_{2,n}^{-1/2} \hat{\Sigma}_n \hat{D}_{2,n}^{-1/2} \quad \text{\small (this is the design matrix of $x_i \equiv \hat{D}_{2,n}^{-1/2} X_i, \, 1 \leq i \leq n$)}
\end{equation*}
and $\mathbf{X} \equiv D_2^{-1/2} \Sigma D_2^{-1/2}$ with the convention that the $(j,j)$-th entry of $\hat{D}_{2,n}^{-1/2}$ (resp. $D_2^{-1/2}$) is zero whenever the corresponding entry of $\hat{D}_{2,n}$ (resp. $D_2$) is zero. Assume that $\delta, \varepsilon \in (0, 1/2)$ and $S \subset \{1, \dots, p\}$ with cardinality $|S| = s$, and set
\begin{equation*}
    \tilde{\alpha} = \alpha \sqrt{\frac{1+\varepsilon}{1-\varepsilon}} \quad \text{and} \quad K \equiv 784 \left[(1+\varepsilon)(1+\alpha)^2\right] {h}^2.
\end{equation*}
Finally, assume
\begin{equation*}
    n \geq \max \left\{ \left( \frac{K (1 + 2 \ln(p / 4\delta))}{\mathrm{re}(\mathbf{X}, S, \tilde{\alpha})^2 \varepsilon^4} \right) s, \; \frac{4q^2 \, 3^{2/q} s^{2/q}}{\delta^{2/q}} \right\}.
\end{equation*}
Then the following three properties hold simultaneously with probability $\geq 1 - \delta$:
\begin{itemize}
    \item Let $x \in \mathbb{R}^p$ be any vector such that $\hat{D}_{2,n} x \in \Delta_{\alpha, S}$. Then $D_2 x \in \Delta_{\tilde{\alpha}, S}$,
    \item For any $x$ as above, $x^T \hat{\Sigma}_n x \geq (1-\varepsilon) x^T \Sigma x$,
    \item $\mathrm{re}(\hat{\mathbf{X}}_n, S, \alpha) \geq (1-\varepsilon)^2 \, \mathrm{re}(\mathbf{X}, S, \tilde{\alpha})$,
\end{itemize}
where $\Delta_{\alpha, S}, \Delta_{\tilde{\alpha}, S}$ and $\mathrm{re}(\hat{\mathbf{X}}_n, S, \alpha), \mathrm{re}(\mathbf{X}, S, \tilde{\alpha})$ are defined as in equations (10) and (11).
\end{theorem}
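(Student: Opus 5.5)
This result is quoted from \cite{Oliveira2016}, so I would reconstruct its argument rather than derive it from anything earlier in the paper. The plan has three parts: a one-sided (lower-tail) concentration bound for the quadratic form $x^T\hat{\Sigma}_n x$ that is uniform over sparse vectors; a transfer of that bound from sparse vectors to the cone $\Delta_{\tilde{\alpha},S}$; and a self-normalized control of the diagonal ratios $\hat{D}_{2,n}/D_2$ that reconciles the empirical and population scalings.

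\emph{Step 1 (lower tail for sparse vectors).} Fix $v$ with $\|v\|_0\le n$. The summands $(v^TX_i)^2$ are nonnegative and i.i.d. The first hypothesis gives $\Ex[(v^TX_1)^4]\le h^2(v^T\Sigma v)^2$. A Bernstein-type lower-tail inequality for nonnegative variables, which needs only second moments, then yields
\begin{equation*}
\Pr\left[v^T\hat{\Sigma}_nv\le(1-\varepsilon)v^T\Sigma v\right]\le \exp\left(-c\,n\varepsilon^2/h^2\right).
\end{equation*}
I would make this uniform over all $d$-sparse $v$ with $d\asymp s$ by using a PAC-Bayesian smoothing argument, or equivalently a net over sparse directions. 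The number of supports is at most $\binom{p}{d}$, which costs $d\log(p/\delta)$ in the exponent. This produces the condition $n\gtrsim Kd\ln(p/\delta)/\varepsilon^4$.

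\emph{Step 2 (from sparse vectors to the cone).} I would use the standard Maurey or convex-hull argument, as in Rudelson--Zhou: every vector in the cone $\Delta_{\tilde{\alpha},S}$, normalized, lies in a scaled convex hull of $d$-sparse vectors, with $d$ of order $(1+\tilde{\alpha})^2s/\mathrm{re}(\mathbf{X},S,\tilde{\alpha})^2$. A lower bound on sparse directions combined with the trivial positivity of $\hat{\Sigma}_n$ then gives $x^T\hat{\Sigma}_nx\ge(1-\varepsilon)x^T\Sigma x$ on the cone. Because only a lower bound is needed, no upper sparse-eigenvalue control is required, and heavy tails are harmless. This step is the source of the factor $(1+\alpha)^2$ in $K$.

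\emph{Step 3 (self-normalization of the diagonal).} The entries $\hat{D}_{2,n,jj}=\frac1n\sum_iX_{ij}^2$ have an easy lower tail by the same Step 1 argument applied to $e_j$. For the upper tail I would use the $2q$-moment hypothesis together with Markov's inequality and a union bound over the coordinates in $S$ only. This gives $\hat{D}_{2,n,jj}\le(1+\varepsilon)D_{2,jj}$ for $j\in S$ with probability $1-\delta/3$ whenever $n\ge 4q^2 3^{2/q}s^{2/q}/\delta^{2/q}$, which is the second requirement on $n$. Off $S$, the lower tail $\hat{D}_{2,n,jj}\ge(1-\varepsilon)D_{2,jj}$ holds uniformly over $p$ coordinates with exponential probability.

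With both diagonal bounds in hand, the first bullet follows. Rescaling coordinates enlarges $\|x_{S^c}\|_1$ by at most $(1-\varepsilon)^{-1/2}$ and shrinks $\|x_S\|_1$ by at most $(1+\varepsilon)^{-1/2}$, so the cone constant $\alpha$ becomes $\tilde{\alpha}$. The second bullet is Step 2 applied to these vectors. The third bullet follows from writing $\hat{\mathbf X}_n$ and $\mathbf X$ in terms of $\hat{\Sigma}_n,\Sigma$ and the diagonals, and combining the second bullet with the two diagonal comparisons; this accounts for the factor $(1-\varepsilon)^2$.

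The main obstacle is Step 1: obtaining a uniform lower tail over all sparse directions using only fourth moments. I would attempt the PAC-Bayesian, Gaussian-smoothing approach of Oliveira, which bounds $\Ex\exp(-\theta(v^TX)^2)$ through $1-\theta\Ex(v^TX)^2+\theta^2\Ex(v^TX)^4/2$.
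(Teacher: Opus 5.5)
The paper gives no proof of this result. It imports it verbatim from Oliveira (2016), so your reconstruction has to stand on its own, and it has a genuine gap in Step~2.

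A lower bound on $u^T\hat\Sigma_n u$ at sparse $u$ does not transfer to the cone through convex-hull membership. The map $x\mapsto x^T\hat\Sigma_n x$ is convex, so $x=\sum_k\lambda_k u_k$ only gives $x^T\hat\Sigma_n x\le\sum_k\lambda_k u_k^T\hat\Sigma_n u_k$, and positive semidefiniteness does not reverse this. The Rudelson--Zhou reduction you cite assumes a two-sided isometry on sparse vectors, and the upper half is exactly what heavy tails destroy. In fact the deterministic implication you rely on is false. Take $\Sigma=I$ and $S=\{1\}$. Let $A_{11}=1$, $A_{1j}=A_{j1}=-\mu$ and $A_{jj}=M$ for $j\ge 2$, with all other entries zero, where $\mu^2=M/(p-1)$ and $M\ge(p-1)/\alpha^2$. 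Then $A\succeq 0$, and every $d\times d$ principal submatrix has smallest eigenvalue at least $1-\frac{d-1}{p-1}\cdot\frac{M}{M-1}$. Yet $x=(1,\mu/M,\dots,\mu/M)$ lies in $\Delta_{\alpha,S}$ and $x^TAx=0$.

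The culprit is a huge diagonal off $S$, which your Step~3 deliberately leaves uncontrolled. Note that here $\hat D_{2,n}^{1/2}x\notin\Delta_{\alpha,S}$ unless $\alpha\ge\sqrt{p-1}$, which is exactly why the theorem is phrased with the empirical cone. So Step~2 cannot deliver the bound on the population cone $\Delta_{\tilde\alpha,S}$ that you then use for the second bullet.

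Relatedly, in Step~1 a net over sparse directions is not equivalent to PAC-Bayes. Passing from net points to nearby vectors needs upper sparse eigenvalues, and these fail here already for $1$-sparse vectors: with only $2q$ moments, some $(\hat\Sigma_n)_{jj}$ is of order $(np)^{1/q}/n$.

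What is missing is Maurey's method used as a second-moment identity, with the empirical diagonal supplying the upper bound that identity needs. Here the cone conditions are read with square roots, as you implicitly do. Given $x$, let $Z=\|\hat D_{2,n}^{1/2}x\|_1\,\mathrm{sign}(x_j)(\hat D_{2,n})_{jj}^{-1/2}e_j$ with probability proportional to $(\hat D_{2,n})_{jj}^{1/2}|x_j|$, and let $\bar x$ be the average of $k$ independent copies. Then $\bar x$ is $k$-sparse and $\Ex Z=x$. For symmetric $B$, $\Ex[\bar x^TB\bar x]=(1-\tfrac1k)x^TBx+\tfrac1k\Ex[Z^TBZ]$, and $\Ex[Z^T\hat\Sigma_nZ]=\|\hat D_{2,n}^{1/2}x\|_1^2$ exactly. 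On the event that $u^T\hat\Sigma_nu\ge(1-\varepsilon)u^T\Sigma u$ for all $k$-sparse $u$, this yields, for every $x$,
\[
x^T\hat\Sigma_n x\;\ge\;(1-\varepsilon)\,x^T\Sigma x-\frac{\|\hat D_{2,n}^{1/2}x\|_1^2}{k-1}.
\]
Now take $\hat D_{2,n}^{1/2}x\in\Delta_{\alpha,S}$. The cone condition, the upper diagonal bound on $S$, Cauchy--Schwarz and the first bullet bound the penalty by $(1+\alpha)^2(1+\varepsilon)s\,x^T\Sigma x/\bigl((k-1)\,\mathrm{re}(\mathbf X,S,\tilde\alpha)^2\bigr)$. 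Choosing $k\asymp(1+\alpha)^2(1+\varepsilon)s/(\varepsilon\,\mathrm{re}(\mathbf X,S,\tilde\alpha)^2)$ and applying Step~1 at sparsity $k$ closes the argument.

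So the empirical normalization enters the transfer step itself, not only the conversion of cone constants. This is why only $S$ needs an upper diagonal bound. It also matches the form $K\propto(1+\varepsilon)(1+\alpha)^2h^2$ and the restriction of the moment hypothesis to $|v|_0\le n$, since one needs $k\le n$. Your treatment of the first and third bullets can stay as is.
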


Before stating our main result on restricted eigenvalues, we begin with a useful lemma derived from concentration results for self-normalized processes:
\begin{lemma}
\label{lemma:self_norm1}
Assume, for every $k \in [p]$, $\{X_{ik}\}_{i=1}^{n}$ are independent and identically distributed random variables. Define:
\begin{equation*}
\hat{\mu}_k = \frac{1}{n} \sum_{i=1}^{n} X_{ik} \;, \; \; 
\hat{\sigma}_k^2 =\frac{1}{n}\sum_{i=1}^{n}\left(X_{ik} - \hat{\mu}_k\right)^2 \; .
\end{equation*}
Assume that, for every $k \in [p]$, $0<\Ex({X}_{1k} - \mathbb{E}{X}_{1k})^4 < + \infty$ and take $\sigma^2_k = \mathbb{E}({X}_{1k}-\mathbb{E}X_{1k})^2>0$. Then, given $t>0$, for any $k \in [p]$:
\begin{equation*}
    \Pr\left[\frac{|\mathbb{E}X_{1k}-\hat\mu_k|}{\hat{\sigma}_k}
      \geq\sqrt{\frac{24(1+t)}{n}}\right] \leq 4 e^{-t} +  2e^{-\frac{n\sigma^4_k}{2\sqrt{2}\mathbb{E}({X}_{1k} - \mathbb{E}{X}_{1k})^4}}  + 4 e^{1-\frac{n}{8}} \; .
\end{equation*}
Furthermore, take $a>24$ and assume $p\geq e^{24/a}$, $n\geq \max\{8,\frac{8(1+\log p)}{1-8c_1},\frac{\sqrt{2}\log p}{(4-\sqrt{2})c_1}\}$, where $c_1 < \min_{k \in [p]}\frac{\sigma_k^4}{8\mathbb{E}({X}_{1k} - \mathbb{E}{X}_{1k})^4}\leq \frac{1}{8}$. Then:
\begin{equation*}
    \Pr\left[\max_{k \in [p]}\frac{|\mathbb{E}X_{1k}-\hat\mu_k|}{\hat{\sigma}_k}
      \geq\sqrt{\frac{a\log p}{n}}\right] \leq \frac{4e}{p^{a/24-1}} + 6 e^{-c_1 n}\; .
\end{equation*}
\end{lemma}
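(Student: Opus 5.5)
We will reduce the statement to a self-normalized deviation inequality for a centred sample. Fix $k$ and write $Z_i = X_{ik}-\Ex X_{1k}$, so that $\Ex Z_1 = 0$, $\Ex Z_1^2=\sigma_k^2$ and $\Ex Z_1^4<\infty$. Since $\hat{\sigma}_k$ is invariant under translation, we have
\begin{equation*}
\hat{\sigma}_k^2 = \frac{1}{n}\sum_{i=1}^n Z_i^2 - \bar Z^2 \; , \qquad \bar Z = \frac1n\sum_{i=1}^n Z_i = \hat\mu_k-\Ex X_{1k} \; .
\end{equation*}
Set $V_n^2=\sum_i Z_i^2$. The quantity to control is $|\bar Z|/\hat\sigma_k = |S_n|/\sqrt{n(V_n^2 - S_n^2/n)}$, where $S_n=\sum_i Z_i$.

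The plan is to intersect three events. \emph{(a) Self-normalized tail for $S_n/V_n$.} For symmetric $Z_i$, Efron/Hoeffding conditioning on $|Z_i|$ gives $\Pr[|S_n|\ge xV_n]\le 2e^{-x^2/2}$. Without symmetry, we will use the de la Peña type bound for general mean-zero variables with finite second moment (Theorem 2.16 / Chapter 12 of de la Peña--Lai--Shao):
\begin{equation*}
\Pr\left[|S_n|\ge x\sqrt{V_n^2+4\Ex V_n^2}\right]\le C e^{-x^2/c} \; .
\end{equation*}
The constant $24$ and the prefactor $4e^{-t}$ presumably come from the precise form of this inequality. \emph{(b) Lower bound on $V_n^2$.} We need $V_n^2\ge n\sigma_k^2/2$ with high probability. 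This follows from the one-sided (lower-tail) Bernstein inequality for nonnegative variables, which needs only second moments of $Z_i^2$: $\Pr[\sum Z_i^2\le n\sigma_k^2 - s]\le \exp(-s^2/(2n\Ex Z_1^4))$. With $s=n\sigma_k^2(1-1/\sqrt2)$, or a similar choice, this gives a term $e^{-n\sigma_k^4/(2\sqrt2\,\Ex Z_1^4)}$ up to constants, accounting for the second term. Doubling it to cover a two-sided version if needed gives the factor $2$. On this event, $\Ex V_n^2 = n\sigma_k^2\le 2V_n^2$, so $\sqrt{V_n^2+4\Ex V_n^2}\le 3V_n$. \emph{(c) Controlling the $S_n^2/n$ correction.} We need $S_n^2/n\le V_n^2/2$, say. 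By (a) with $x$ of order $\sqrt n$, this fails with probability at most $Ce^{-n/8}$, which produces the $4e^{1-n/8}$ term.

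On the intersection of these events,
\begin{equation*}
\frac{|\bar Z|}{\hat\sigma_k}\le \frac{|S_n|}{\sqrt{n V_n^2/2}}\le \frac{3x\sqrt2}{\sqrt n} \; .
\end{equation*}
Choosing $x^2$ proportional to $(1+t)$ then yields $\sqrt{24(1+t)/n}$ after adjusting constants. We expect the main obstacle to be matching the exact constants, namely $24$, $2\sqrt2$ and $e^{1-n/8}$. We will therefore first identify the precise self-normalized inequality being used, most likely $\Pr[|S_n|/\sqrt{V_n^2+\Ex V_n^2}\ge x]\le 4e^{-x^2/8}$ or similar, and then tune $x$ and the Bernstein threshold to match.

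For the maximal statement, we take $t = (a/24)\log p - 1$, so that $24(1+t)=a\log p$ and $4e^{-t}=4e/p^{a/24}$. A union bound over the $p$ coordinates gives $4e/p^{a/24-1}$. The hypothesis $p\ge e^{24/a}$ ensures $t\ge 0$. The remaining terms are $p\cdot 2e^{-n\sigma_k^4/(2\sqrt2\Ex Z^4)}\le 2\exp(\log p - 8c_1 n/(2\sqrt2))$ and $p\cdot 4e^{1-n/8}$. Using the stated lower bounds on $n$, namely $n\ge \sqrt2\log p/((4-\sqrt2)c_1)$ and $n\ge 8(1+\log p)/(1-8c_1)$, we will bound these by $2e^{-c_1 n}$ and $4e^{-c_1n}$ respectively, for a total of $6e^{-c_1 n}$.
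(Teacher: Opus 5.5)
Your architecture is the paper's. It also passes to $\hat\nu_k^2=\frac{1}{n}\sum_i(X_{ik}-\Ex X_{1k})^2$ (your $V_n^2/n$) with $\hat\sigma_k^2=\hat\nu_k^2-\bar Z^2$. It then combines a self-normalized bound (Lemma 8 of \cite{Oliveira2023-it}) with a lower-tail bound for $\hat\nu_k^2$ (their Lemma 9), and controls the centering correction by reusing the first bound at $t=n/8-1$. It finishes with $t=\frac{a}{24}\log p-1$ and exactly your union-bound arithmetic for the two conditions on $n$.

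What you leave open is the constant bookkeeping, and your tentative choices would not give the stated constants.
\begin{itemize}
\item The self-normalized input is $\Pr\bigl[|\bar Z|\ge\sqrt{2(1+t)(\sigma_k^2+\hat\nu_k^2)/n}\bigr]\le 4e^{-t}$. This normalizes by $V_n^2+\Ex V_n^2$, not by $V_n^2+4\Ex V_n^2$. On $\{\sigma_k^2<2\hat\nu_k^2\}$ it yields $\Pr\bigl[|\bar Z|\ge\sqrt{6(1+t)/n}\,\hat\nu_k\bigr]\le 4e^{-t}+\Pr[\sigma_k^2\ge 2\hat\nu_k^2]$.
\item The correction event must be $\{\hat\nu_k<2\hat\sigma_k\}$, i.e.\ $S_n^2/n<\frac{3}{4}V_n^2$, rather than your $S_n^2/n\le\frac{1}{2}V_n^2$. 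On it $\hat\sigma_k>\hat\nu_k/2$, which is what turns $6$ into $24$. Its complement $\{|\bar Z|\ge\frac{\sqrt3}{2}\hat\nu_k\}$ is the previous bound with $6(1+t)/n=\frac{3}{4}$, whence $4e^{1-n/8}$. Your threshold would give $4e^{1-n/12}$ instead.
\item The factor $2$ on the middle term is not a two-sided bound. $\Pr[\sigma_k^2\ge 2\hat\nu_k^2]$ is simply charged twice, once in each application.
\end{itemize}

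The last constant, $2\sqrt2$, cannot be obtained by tuning Bernstein. The lower-tail inequality at level $n\sigma_k^2/2$ gives $e^{-n\sigma_k^4/(8\Ex Z_1^4)}$, and your choice $s=n\sigma_k^2(1-1/\sqrt2)$ gives a smaller exponent still. The paper does not derive $2\sqrt2$ either; it quotes $\Pr[\sigma_k^2\ge 2\hat\nu_k^2]\le e^{-n\sigma_k^4/(2\sqrt2\,\Ex Z_1^4)}$ from the cited lemma. That display is false in general. Let $Z_1$ take the values $-\sqrt M,0,\sqrt M$ with probabilities $q/2,1-q,q/2$. Then $\sigma_k^4/\Ex Z_1^4=q$, while $\{\sigma_k^2\ge 2\hat\nu_k^2\}=\{\mathrm{Bin}(n,q)\le nq/2\}$. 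This event has probability $e^{-nD(q/2\,\|\,q)+O(\log n)}$, where $D(q/2\,\|\,q)=\frac{1-\ln 2}{2}q+O(q^2)\approx 0.153\,q$. For small fixed $q$ and large $n$ this exceeds $e^{-nq/(2\sqrt2)}$.

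So your Bernstein constant $8$ is the right one for this step. Carried through, your argument (equivalently the paper's) proves the first display with middle term $2e^{-n\sigma_k^4/(8\Ex Z_1^4)}$. The second part then needs $n\bigl(\min_k\sigma_k^4/(8\Ex(X_{1k}-\Ex X_{1k})^4)-c_1\bigr)\ge\log p$ in place of $n\ge\sqrt2\log p/((4-\sqrt2)c_1)$. The other condition on $n$ and the rest of your union-bound computation are unaffected.
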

\begin{proof}[Proof of Lemma \ref{lemma:self_norm1}]
Define, for all $k \in [p]$:
\begin{equation*}
    \hat{\nu}_k^2 = \frac{1}{n}\sum_{i=1}^{n}\left(X_{ik} - \mathbb{E}X_{1k}\right)^2 \; ,
\end{equation*}
and notice that $\hat{\sigma}_k^2= \hat{\nu}_k^2 - \left(\hat{\mu}_k - \mathbb{E}X_{1k}\right)^2$.
Now, take $k \in [p]$ and $t>0$. A direct application of Lemma 8 on self-normalized concentration and Lemma 9 for concentration of non-negative random variables from \cite{Oliveira2023-it} gives:
\begin{equation*}
    \Pr\left[|\mathbb{E}X_{1k}-\hat\mu_k|\geq\sqrt{\frac{2(1+t)}{n}\left(\sigma^2_k+\hat{\nu}^2_k\right)}\right] \leq 4 e^{-t} \; , 
\end{equation*}
and
\begin{equation}
\label{eq:bound_selfnorm}
\Pr\left[\sigma^2_k\geq2\hat{\nu}^2_k\right] \leq e^{-\frac{n\sigma^4_k}{2\sqrt{2}\mathbb{E}({X}_{1k} - \mathbb{E}{X}_{1k})^4}} \; , 
\end{equation}
which implies:
\begin{equation}
\label{eq:term_three}
    \Pr\left[|\mathbb{E}X_{1k}-\hat\mu_k|\geq\sqrt{\frac{6(1+t)}{n}\hat{\nu}^2_k}\right] \leq 4 e^{-t} + e^{-\frac{n\sigma^4_k}{2\sqrt{2}\mathbb{E}({X}_{1k} - \mathbb{E}{X}_{1k})^4}} \; .
\end{equation}
Then:
\begin{eqnarray*}
    \Pr\left[\frac{|\mathbb{E}X_{1k}-\hat\mu_k|}{\hat{\sigma}_k}
    \geq\sqrt{\frac{24(1+t)}{n}}\right] & \leq & 4 e^{-t} + e^{-\frac{n\sigma^4_k}{2\sqrt{2}\mathbb{E}({X}_{1k} - \mathbb{E}{X}_{1k})^4}} + \Pr\left[\frac{\hat{\nu}_k}{\hat{\sigma}_k}\geq2\right]\\
    & = & 4 e^{-t} + e^{-\frac{n\sigma^4_k}{2\sqrt{2}\mathbb{E}({X}_{1k} - \mathbb{E}{X}_{1k})^4}} \\
    & + & \Pr\left[|\mathbb{E}X_{1k}-\hat\mu_k|\geq\frac{\sqrt{3}\hat{\nu}_k}{2}\right]\\
    & \leq & 
    4 e^{-t} +  2e^{-\frac{n\sigma^4_k}{2\sqrt{2}\mathbb{E}({X}_{1k} - \mathbb{E}{X}_{1k})^4}} + 4 e^{1-\frac{n}{8}} \; ,
\end{eqnarray*}
where, in the last inequality, we apply equation \eqref{eq:term_three} with $t = \frac{n}{8}-1$. This is the first inequality of the lemma. For the second, by a union bound and taking $t = \frac{a}{24}\log p - 1$ in the equation above:
\begin{eqnarray*}
    \Pr\left[\max_{k \in [p]} \frac{|\mathbb{E}X_{1k}-\hat\mu_k|}{\hat{\sigma}_k} \geq\sqrt{\frac{a\log p}{n}}\right] & \leq &  4 e^{-\frac{a\log p}{24} +1} +  2pe^{-\min_{k \in [p]}\frac{n\sigma^4_k}{2\sqrt{2}\mathbb{E}({X}_{1k} - \mathbb{E}{X}_{1k})^4}}  + 4p e^{1-\frac{n}{8}} \\
    & \leq & \frac{4e}{p^{a/24 -1}} + 6 e^{-c_1n} \; ,
\end{eqnarray*}
where the last inequality follows from the assumption on $n$ in the statement of the lemma since, by Jensen's inequality, $\min_{k \in [p]}\frac{\sigma_k^4}{\mathbb{E}({X}_{1k} - \mathbb{E}{X}_{1k})^4} \leq 1$. This completes the proof.
\end{proof}

From the proof above, we have the following useful result which is stated separately:
\begin{lemma}
\label{lemma:bound_max}
In the same setting as Lemma \ref{lemma:self_norm1}, with:
\begin{equation*}
    \hat{\nu}_k^2 = \frac{1}{n}\sum_{i=1}^{n}\left(X_{ik} - \mathbb{E}X_{1k}\right)^2 \; , \; k \in [p] \; ,
\end{equation*}
it holds that:
\begin{equation*}
    \Pr\left[ \max_{k \in[p]} \frac{\hat{\nu}_k}{\hat{\sigma}_k}\geq2\right]
     \leq   5 e^{-c_1 n} \; , \; \; \Pr\left[ \max_{k \in[p]} \frac{\sigma_k}{\hat{\sigma}_k}\geq4\right]
     \leq   6 e^{-c_1 n} \; .
\end{equation*}
\end{lemma}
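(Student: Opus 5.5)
The plan is to reuse the per-coordinate estimates already obtained inside the proof of Lemma \ref{lemma:self_norm1}, and then pass from a fixed $k$ to the maximum over $k \in [p]$ with a union bound. The exponents are absorbed using the lower bounds on $n$ from that lemma.

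\emph{First claim.} Fix $k$. The proof of Lemma \ref{lemma:self_norm1} uses the identity $\hat\sigma_k^2 = \hat\nu_k^2 - (\hat\mu_k - \Ex X_{1k})^2$. From it, $\hat\nu_k/\hat\sigma_k \geq 2$ is equivalent to $(\hat\mu_k-\Ex X_{1k})^2 \geq \tfrac34 \hat\nu_k^2$, that is, to $|\Ex X_{1k}-\hat\mu_k| \geq \tfrac{\sqrt3}{2}\hat\nu_k$. Applying \eqref{eq:term_three} with $t = n/8-1$ (admissible since $n \geq 8$) gives $\sqrt{6(1+t)/n} = \sqrt{3}/2$. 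Hence this event has probability at most $4e^{1-n/8} + e^{-n\sigma_k^4/(2\sqrt2\,\Ex(X_{1k}-\Ex X_{1k})^4)}$. By the choice of $c_1$, the second exponent is at most $e^{-2\sqrt2 c_1 n}$. A union bound over $k$ then gives
\begin{equation*}
\Pr\left[\max_{k}\hat\nu_k/\hat\sigma_k \geq 2\right] \leq 4p\,e^{1-n/8} + p\,e^{-2\sqrt2 c_1 n}.
\end{equation*}

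\emph{Absorbing the factor $p$.}
\begin{itemize}
\item The condition $n \geq 8(1+\log p)/(1-8c_1)$ rearranges to $n/8 - 1 - \log p \geq c_1 n$, so $4p\,e^{1-n/8} \leq 4e^{-c_1 n}$.
\item The condition $n \geq \sqrt2 \log p/((4-\sqrt2)c_1)$ gives $\log p \leq (2\sqrt2-1)c_1 n$, so $p\,e^{-2\sqrt2 c_1 n} \leq e^{-c_1 n}$.
\end{itemize}
Together these give the bound $5e^{-c_1 n}$.

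\emph{Second claim.} Split on \eqref{eq:bound_selfnorm}. On the event where both $\sigma_k^2 < 2\hat\nu_k^2$ and $\hat\nu_k < 2\hat\sigma_k$ hold for every $k$, we get $\sigma_k < \sqrt2\,\hat\nu_k < 2\sqrt2\,\hat\sigma_k < 4\hat\sigma_k$. So the event $\max_k \sigma_k/\hat\sigma_k \geq 4$ is contained in the union of two events:
\begin{itemize}
\item $\{\max_k \hat\nu_k/\hat\sigma_k\geq 2\}$, which has probability at most $5e^{-c_1 n}$ by the first claim;
\item $\{\exists k: \sigma_k^2\geq 2\hat\nu_k^2\}$, which by \eqref{eq:bound_selfnorm} and a union bound has probability at most $p\,e^{-2\sqrt2 c_1 n} \leq e^{-c_1 n}$, by the same absorption as above.
\end{itemize}
Summing gives $6e^{-c_1 n}$.

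The only delicate step is checking that the two lower bounds on $n$ in Lemma \ref{lemma:self_norm1} are exactly what is needed to absorb the factor $p$ into a clean $e^{-c_1 n}$, in particular the $\sqrt2/(4-\sqrt2)$ constant. If the constants do not match exactly, I would loosen to an extra constant multiple, which the statement tolerates.
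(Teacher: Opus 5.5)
Your proposal is correct and is essentially the paper's argument: the paper obtains this lemma directly from the proof of Lemma \ref{lemma:self_norm1}. It applies \eqref{eq:term_three} with $t=n/8-1$, takes a union bound, and absorbs the factor $p$ using the two lower bounds on $n$. The second bound follows by splitting on \eqref{eq:bound_selfnorm}, which gives $5+1=6$, and your constant check is right since $\sqrt2/(4-\sqrt2)=1/(2\sqrt2-1)$.
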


Finally, as a consequence of the lemmata above and Theorem 5.2 from \cite{Oliveira2016}:
\begin{corollary}
\label{cor:corollary_eigen}
Suppose Assumption \ref{cond2} holds. Take $\delta,\eta \in (0,1/2)$, $\alpha > 0$ and set:
\begin{equation*}
    \tilde{\alpha} = 2\alpha \sqrt{\frac{1+\eta}{1-\eta}} \; \mathrm{and} \; K = 784 \left(\eta+1\right)\left(2\alpha+1\right)^2 h^2 \; .
\end{equation*}
If
\begin{equation*}
    n \geq \max \left\{ \left(\frac{K(1+2\ln (p/4\delta))}{\mathrm{re}(\bar{\Sigma},S,\tilde{\alpha})^2 \eta^4}\right) s, \frac{4 g^2 3^{2/g}s^{2/g}}{\delta^{2/g}} \right\}
\end{equation*}
with $S = S(\beta^0)$, $s = |S|$, and the conditions in Lemma \ref{lemma:self_norm1} also hold. Then, with $a$ and $c_1$ as defined in the same lemma:
\begin{equation*}
    \mathbb{P}\left[ \mathrm{re}(\tilde{\Sigma}_n,S,\alpha) \geq \left(1-\eta \right)^2 \mathrm{re}(\bar{\Sigma},S,\tilde{\alpha}) - \frac{a (\alpha+ 1)^2s\log p}{n} \right] \geq 1 -
   4e\,p^{1-a/24} - 11 e^{-c_1 n}
    - \delta \; .
\end{equation*}
\end{corollary}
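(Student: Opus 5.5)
We compare $\tilde{\Sigma}_n$ with the matrix $\hat{\mathbf{X}}_n$ of Theorem \ref{thm:restricted_eigenvalues_adaptation}, built from the \emph{uncentered} data $\bar{X}_i$. The two differ only by the empirical centering. We will prove a lower bound on the restricted eigenvalue of $\hat{\mathbf{X}}_n$, then control the centering perturbation on the cone using Lemmas \ref{lemma:self_norm1} and \ref{lemma:bound_max}.

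First, write $\hat{\nu}_k^2=\frac1n\sum_i(X_{ik}-\mu_k)^2$, so that $\hat{\mathbf{X}}_n$ is the self-normalized Gram matrix of the vectors $x_i=\hat{D}^{-1/2}\bar X_i$, with $k$-th diagonal entry $\hat\nu_k^2/\sigma_k^2$. A direct computation gives, entrywise,
\begin{equation*}
\tilde{\Sigma}_n = \hat{D}_\nu^{1/2}D_X^{-1/2}\,\bigl(\hat{\mathbf{X}}_n - u u^T\bigr)\,D_X^{-1/2}\hat{D}_\nu^{1/2},
\qquad u_k=\frac{\hat\mu_k-\mu_k}{\hat\nu_k},
\end{equation*}
where $\hat D_\nu=\mathrm{diag}(\hat\nu_k^2)$. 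Equivalently, $\tilde\Sigma_n = W\hat{\mathbf{X}}_n W - ww^T$ with $W=\mathrm{diag}(\hat\nu_k/\hat\sigma_k)$ and $w_k=(\hat\mu_k-\mu_k)/\hat\sigma_k$.

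We then apply Theorem \ref{thm:restricted_eigenvalues_adaptation} to $\bar X_i$, with $\varepsilon=\eta$, $q=g$, and cone parameter $2\alpha$. Assumption \ref{cond2} supplies exactly the hypotheses on $h$ and $h_\star$, since $\mathbb{E}\bar X_{1j}^2=1$. The sample-size condition is the displayed one, because $\tilde\alpha=2\alpha\sqrt{(1+\eta)/(1-\eta)}$ and $K=784(1+\eta)(1+2\alpha)^2h^2$. With probability at least $1-\delta$ this gives $\mathrm{re}(\hat{\mathbf{X}}_n,S,2\alpha)\ge(1-\eta)^2\mathrm{re}(\bar\Sigma,S,\tilde\alpha)$. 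Cone parameter $2\alpha$ is used rather than $\alpha$ because the rescaling by $W$ distorts cones. On the event of Lemma \ref{lemma:bound_max} we have $1\le \hat\nu_k/\hat\sigma_k\le 2$, using $\hat\sigma_k\le\hat\nu_k$. So if $v\in\Delta_{\alpha,S}$, then $Wv$ satisfies $\|(Wv)_{S^c}\|_1\le 2\|v_{S^c}\|_1\le2\alpha\|v_S\|_1\le 2\alpha\|(Wv)_S\|_1$, i.e.\ $Wv\in\Delta_{2\alpha,S}$. Hence
\begin{equation*}
v^TW\hat{\mathbf{X}}_nWv\ge \mathrm{re}(\hat{\mathbf{X}}_n,S,2\alpha)^2\|(Wv)_S\|_2^2\ge \mathrm{re}(\hat{\mathbf{X}}_n,S,2\alpha)^2\|v_S\|_2^2 .
\end{equation*}
Here $\mathrm{re}$ enters squared, while the corollary states a bound on $\mathrm{re}$ itself. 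Either the corollary's convention absorbs this, or we take square roots at the end using $\sqrt{a-b}\ge\sqrt a-\sqrt b$-type bounds. The $(1-\eta)^2$ factor suggests the statement should be read with the same convention as Theorem \ref{thm:restricted_eigenvalues_adaptation}, and we follow it.

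Next we bound the rank-one correction. For $v\in\Delta_{\alpha,S}$,
\begin{equation*}
(w^Tv)^2\le\|w\|_\infty^2\|v\|_1^2\le \|w\|_\infty^2(1+\alpha)^2\|v_S\|_1^2\le \|w\|_\infty^2(1+\alpha)^2 s\|v_S\|_2^2 .
\end{equation*}
The second part of Lemma \ref{lemma:self_norm1} gives $\|w\|_\infty^2\le a\log p/n$ with probability at least $1-4e\,p^{1-a/24}-6e^{-c_1n}$. Combining,
\begin{equation*}
v^T\tilde\Sigma_nv\ge\Bigl[(1-\eta)^2\mathrm{re}(\bar\Sigma,S,\tilde\alpha)-\tfrac{a(1+\alpha)^2s\log p}{n}\Bigr]\|v_S\|_2^2 ,
\end{equation*}
where $(1-\eta)^2\mathrm{re}(\bar\Sigma,S,\tilde\alpha)$ should be read as the bound on $\mathrm{re}(\hat{\mathbf{X}}_n,S,2\alpha)$ in the convention above. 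A union bound over the three events gives failure probability $\delta+4e\,p^{1-a/24}+6e^{-c_1n}+5e^{-c_1n}$, which matches $11e^{-c_1n}$.

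The main obstacle is the algebraic identity relating $\tilde\Sigma_n$ to $\hat{\mathbf{X}}_n$, together with the cone-inflation step. One must check that the diagonal rescaling $W$ is bounded on both sides on a high-probability event, so that the cone parameter only doubles. One must also handle the square-versus-linear convention of $\mathrm{re}$ so that the subtracted term appears exactly as stated.
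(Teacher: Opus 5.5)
Your proposal follows the paper's proof essentially step for step. It uses the same identity $\tilde\Sigma_n = R_X\bar\Sigma_nR_X - D_X^{-1/2}B_XB_X^TD_X^{-1/2}$ (your $W$ and $w$), the same inflation of $\Delta_{\alpha,S}$ into $\Delta_{2\alpha,S}$ on $\{\max_k\hat\nu_k/\hat\sigma_k<2\}$ before invoking the third conclusion of Theorem~\ref{thm:restricted_eigenvalues_adaptation} with cone parameter $2\alpha$, the same bound $\|v\|_1^2\le s(1+\alpha)^2\|v_S\|_2^2$ combined with Lemma~\ref{lemma:self_norm1}, and the same $6+5$ union bound.

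The square-versus-linear mismatch you flag is real, and the paper's own proof makes the same identification without comment: it treats $\inf_{v\in\Delta_{\alpha,S}}v^TAv/\|v_S\|_2^2$ as $\mathrm{re}(A,S,\alpha)$ rather than its square. Under \eqref{eq:def_re} taken literally, both arguments actually give $\mathrm{re}(\tilde\Sigma_n,S,\alpha)^2\ge(1-\eta)^4\mathrm{re}(\bar\Sigma,S,\tilde\alpha)^2-a(\alpha+1)^2s\log p/n$. Your square-root route would make the subtracted term $(\alpha+1)\sqrt{a s\log p/n}$, not the stated one. This is harmless for the later use, which only needs a constant lower bound on $\mathrm{re}(\tilde\Sigma_n,S,\cdot)$.
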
 
\begin{proof}[Proof of Corollary \ref{cor:corollary_eigen}] Define the column vector $B_X$ with coordinates $(B_X)_j = \mu_j - \hat{\mu}_j$. Then:
\begin{eqnarray*}
    \tilde{\Sigma}_n = \frac{1}{n} \tilde{X}^T\tilde{X} & = &\frac{1}{n} D_X^{-1/2} (X-\mathbb{E}X)^T (X-\mathbb{E}X) D_X^{-1/2} + \frac{1}{n} D_X^{-1/2} (X-\mathbb{E}X)^T \mathds{1}_n B_X^T D_X^{-1/2} \\
    & + & \frac{1}{n} D_X^{-1/2} B_X \mathds{1}_n^T  (X-\mathbb{E}X) D_X^{-1/2} +  D_X^{-1/2} B_X B_X^T D_X^{-1/2} \; .
\end{eqnarray*}
Notice that $\frac{1}{n}(X-\mathbb{E}X)^T \mathds{1}_n = -B_X$. This implies:
\begin{equation}
\label{eq:matrix_decomp}
    \tilde{\Sigma}_n = \frac{1}{n} D_X^{-1/2} (X-\mathbb{E}X)^T (X-\mathbb{E}X) D_X^{-1/2} -  D_X^{-1/2} B_X B_X^T D_X^{-1/2} \; .
\end{equation}
Define $\bar{D}_X$ as the diagonal matrix with entries given by the diagonal of $\frac{1}{n}(X-\mathbb{E}X)^T(X-\mathbb{E}X)$. Set $\bar{\Sigma}_n = \frac{1}{n}\bar{D}_X^{-1/2} (X-\mathbb{E}X)^T (X-\mathbb{E}X) \bar{D}_X^{-1/2}$, the correlation matrix with population means in place of sample means. Finally, take $R_X$ a diagonal matrix with $(R_X)_{j,j}^2 = (\bar{D}_X)_{j,j}/(D_X)_{j,j}$ for $j \in [p]$.

Take some $\xi>0$. From equation \eqref{eq:matrix_decomp} and the definition of $\mathrm{re}(\tilde{\Sigma}_n,S,\alpha)$ given in equation \eqref{eq:def_re}, it is clear that we need to bound the following probability:
\begin{equation}
\label{eq:restricted_decomp}
    \Pr \left[\inf_{v \in \Delta_{\alpha,S}} \frac{v^TR_X\bar{\Sigma}_nR_Xv}{\|v_S\|^2_2 }-\frac{v^TD_X^{-1/2}B_X B_X^TD_X^{-1/2}v}{\|v_S\|^2_2 } < (1-\eta)^2 \mathrm{re}(\bar{\Sigma},S,\tilde{\alpha})-\xi\right] \; .
\end{equation}
We will do this by first bounding:
\begin{equation}
\label{eq:term_1}
    \Pr \left[\sup_{v \in \Delta_{\alpha,S}}\frac{v^TD_X^{-1/2}B_X B_X^TD_X^{-1/2}v}{\|v_S\|^2_2 } > \xi\right]  \leq \frac{4e}{p^{a/24-1}} + 6 e^{-c_1 n} \; ,
\end{equation}
with an adequate choice of $\xi$. Then, we will conclude by proving:
\begin{equation}
\label{eq:term_2}
    \Pr \left[\inf_{v \in \Delta_{\alpha,S}} \frac{v^TR_X\bar{\Sigma}_nR_Xv}{\|v_S\|^2_2 } < (1-\eta)^2 \mathrm{re}(\bar{\Sigma},S,\tilde{\alpha})\right] \leq 5 e^{-c_1n} + \delta \; ,
\end{equation}
and taking a union bound.\\
Beginning with equation \eqref{eq:term_1}, first notice that: 
\begin{align*}
    &\Pr \left[\sup_{v \in \Delta_{\alpha,S}}\frac{v^TD_X^{-1/2}B_X B_X^TD_X^{-1/2}v}{\|v_S\|^2_2 } > \xi\right] \\
    & \; \; \; \leq \Pr \left[\max_{i,j} |(D_X^{-1/2}B_X B_X^TD_X^{-1/2})_{ij}|\sup_{v \in \Delta_{\alpha,S}}\frac{\|v\|_1^2}{\|v_S\|^2_2 } > \xi\right] \; .
\end{align*}
Also, for any $v \in\Delta_{\alpha,S}$:
\begin{equation*}
    \|v\|_1^2 = (\|v_S\|_1+\|v_{S^C}\|_1)^2\leq (\alpha+1)^2\|v_S\|_1^2\leq s(\alpha+1)^2\|v_S\|_2^2 \; ,
\end{equation*}
which implies:
\begin{align*}
    & \Pr \left[\sup_{v \in \Delta_{\alpha,S}}\frac{v^TD_X^{-1/2}B_X B_X^TD_X^{-1/2}v}{\|v_S\|^2_2 } > \xi\right] \\ & \leq \;   \Pr \left[\max_{i,j} |(D_X^{-1/2}B_X B_X^TD_X^{-1/2})_{ij}|> \frac{\xi}{s(\alpha+1)^2}\right] \; .
\end{align*}
Then, expanding the definitions of $B_X$ and $D_X$, we obtain:
\begin{equation*}
    \Pr \left[\sup_{v \in \Delta_{\alpha,S}}\frac{v^TD_X^{-1/2}B_X B_X^TD_X^{-1/2}v}{\|v_S\|^2_2 } > \xi\right] \leq  \Pr\left[\max_{k \in [p]}\frac{|\mathbb{E}X_{1k}-\hat\mu_k|}{\hat{\sigma}_k}
      \geq\sqrt{\frac{\xi}{s(\alpha+1)^2}}\right] \; .
\end{equation*}
Setting $\xi = \frac{a(\alpha+1)^2s\log p}{n}$, with $a>0$, and applying Lemma \ref{lemma:self_norm1}:
\begin{equation}
\label{eq:restricted_mean}
    \Pr \left[\sup_{v \in \Delta_{\alpha,S}}\frac{v^TD_X^{-1/2}B_X B_X^TD_X^{-1/2}v}{\|v_S\|^2_2 } > \frac{a(\alpha+1)^2s\log p}{n}\right] \leq \frac{4e}{p^{a/24-1}} + 6 e^{-c_1 n}
\end{equation}
On the other hand, to bound the $R_X$ term in equation \eqref{eq:restricted_decomp} consider the following event, with $\hat{\nu}_k$ as in the statement of Lemma \ref{lemma:bound_max}:
\begin{equation*}
    \mathcal{E} = \left\{\max_{k \in [p]}\frac{\hat{\nu}_k}{\hat{\sigma}_k} < 2 \right\}\ \; ,
\end{equation*}
By Lemma \ref{lemma:bound_max}:
\begin{equation}
\label{eq:bound_prob}
    \Pr [\mathcal{E}^c] \leq 5 e^{-c_1n} \; ,
\end{equation}
Then, in $\mathcal{E}$, for every $v \in  \Delta_{\alpha,S}$:
\begin{equation*}
    \|(R_X v)_{S^c}\|_1 \leq \max_{j \in [p]} (R_X)_{j,j} \|v_{S^c}\|_1 \leq 2\alpha \|v_{S}\|_1 \; ,
\end{equation*}
and, since, with the prior notation, $(R_X)_{j,j}^2 = \hat{\nu}_j^2/\hat{\sigma}_j^2$ and $\hat{\nu}_j^2\geq \hat{\sigma}_j^2$:
\begin{equation*}
    \|(R_X v)_{S^c}\|_1 \leq 2\alpha \|(R_Xv)_{S}\|_1 \; .
\end{equation*}
This implies $R_X v \in \Delta_{2\alpha,S}$. Therefore, setting $w=R_X v$ and observing that $\|(R_X v)_{S}\|_2 \geq \|v_{S}\|_2$:
\begin{align*}
    & \Pr \left[\inf_{v \in \Delta_{\alpha,S}} \frac{v^TR_X\bar{\Sigma}_nR_Xv}{\|v_S\|^2_2 } < (1-\eta)^2 \mathrm{re}(\bar{\Sigma},S,\tilde{\alpha})\right] \\ & \leq 
    \; \Pr \left[\inf_{w \in \Delta_{2\alpha,S}} \frac{w^T\bar{\Sigma}_nw}{\|w_S\|^2_2 } < (1-\eta)^2 \mathrm{re}(\bar{\Sigma},S,\tilde{\alpha})\right] +\Pr \left[\mathcal{E}^c\right] \; .
\end{align*}
The first probability can be bounded directly using the third conclusion in Theorem \ref{thm:restricted_eigenvalues_adaptation}. We obtain equation \eqref{eq:term_2} by combining this theorem with equation \eqref{eq:bound_prob} above. This finishes the proof by a union bound as mentioned before.
\end{proof}

\subsection{Asymptotic convergence of the debiased estimator} \label{appendix:main_result}

To obtain the asymptotic convergence in Theorem \ref{thm:main}, we first need a second lemma related to the self-normalization properties of $\tilde{X}$. This will be necessary for the proof that $ \theta^{(j)}$ is feasible for the optimization problem in equation \eqref{eq:opt1}.

\begin{lemma}
\label{lemma:self_norm2}
Fix $j \in [p]$ and assume conditions \ref{cond4} and \ref{cond5} as well as the assumptions in Lemma \ref{lemma:self_norm1}. 
Take $n \geq \log p/(\kappa-c_2)$, with $c_2 < \kappa =  \min_{k \in [p]}\frac{\Ex\left[(\bar{X}_{1k}\bar{X}_1^T\theta^{(j)})^2\right]^2}{2\sqrt{2}\Ex\left[(\bar{X}_{1k}\bar{X}_1^T\theta^{(j)})^4\right]} $. Then, with $r$, $r'$ and $K_{\theta}$ as in Assumption \ref{cond5} and setting
\begin{equation*}
    \hat{\nu}_k^2 = \frac{1}{n}\sum_{i=1}^{n}\left(X_{ik} - \mathbb{E}X_{1k}\right)^2 \; , \; k \in [p] \; ,
\end{equation*}
it holds that:
\begin{equation*}
    \Pr \left[\max_{k \in [p]}\left|\frac{\sigma_k}{\hat{\nu}_k}\right|\left|\frac{1}{n} \sum_{i=1}^{n}\bar{X}_{ik}\langle \bar{X}_i,\theta^{(j)}\rangle -\delta_{jk}\right| \geq \frac{h_{0}^{1/r}\sqrt{12a\log p}}{n^{1/2-1/r}}\right] 
    \leq \frac{4e}{p^{a-1}} + e^{-c_2 n} + \frac{K_{\theta}^{1/2}}{(h_0 n)^{\frac{1}{r}-\frac{1}{r'}}} 
    \; ,
\end{equation*}
where $\delta_{jk} = 1$ if $j=k$ and zero otherwise and $a > 1$.
\end{lemma}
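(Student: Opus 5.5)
We plan to prove the bound by splitting the target event into a self-normalized deviation event, a truncation event, and a small-denominator event, and then taking a union bound over $k\in[p]$. For fixed $k$, set $W_{ik} = \bar{X}_{ik}\langle \bar{X}_i,\theta^{(j)}\rangle$. These are i.i.d. in $i$ with $\Ex W_{1k} = (\bar{\Sigma}\theta^{(j)})_k = \delta_{jk}$. Write $\xi_{ik} = W_{ik}-\delta_{jk}$, a centred i.i.d. sequence with variance bounded above and below by Assumption \ref{cond4}(i),(ii). Lemma 8 of \cite{Oliveira2023-it} (the self-normalized concentration already used in Lemma \ref{lemma:self_norm1}) gives
\[
\Pr\left[\left|\tfrac1n\textstyle\sum_i \xi_{ik}\right| \geq \sqrt{\tfrac{2(1+t)}{n}\left(\Ex\xi_{1k}^2 + \tfrac1n\textstyle\sum_i \xi_{ik}^2\right)}\right]\le 4e^{-t}.
\]
Lemma 9 of the same reference, applied to the nonnegative $\xi_{ik}^2$ with ratio $\Ex[\xi^2]^2/\Ex[\xi^4]$ bounded below by $\kappa$, gives $\Ex\xi_{1k}^2 \le 2\cdot\frac1n\sum_i\xi_{ik}^2$ except with probability $e^{-\kappa n}$. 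Taking $t = a\log p - 1$ and a union bound over $k$ produces the terms $4e/p^{a-1}$ and $p e^{-\kappa n}\le e^{-c_2 n}$, the latter using $n\ge \log p/(\kappa-c_2)$. On the good event, $|\frac1n\sum_i\xi_{ik}|\lesssim \sqrt{a\log p/n}\,\big(\frac1n\sum_i \xi_{ik}^2\big)^{1/2}$, with constant $\sqrt{6}$ times a factor we will absorb into the $12$.

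Next we control the empirical second moment by pulling out a maximum. We have $\frac1n\sum_i \xi_{ik}^2 \le 2\frac1n\sum_i \bar{X}_{ik}^2\langle\bar X_i,\theta^{(j)}\rangle^2 + 2\delta_{jk}$, and
\[
\tfrac1n\textstyle\sum_i \bar{X}_{ik}^2\langle\bar X_i,\theta^{(j)}\rangle^2 \le \max_i \langle \bar X_i,\theta^{(j)}\rangle^2 \cdot \tfrac1n\textstyle\sum_i \bar X_{ik}^2 = \max_i\langle \bar X_i,\theta^{(j)}\rangle^2\, \hat\nu_k^2/\sigma_k^2 .
\]
This is exactly why the statement multiplies by $\sigma_k/\hat\nu_k$: it cancels the $\hat\nu_k/\sigma_k$ factor uniformly in $k$, with no union bound needed. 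The $\delta_{jk}$ term only matters for $k=j$, and there it is dominated because $\hat\nu_j/\sigma_j$ is bounded below on the event of Lemma \ref{lemma:bound_max}; alternatively it can be folded in via $\theta^{(j)}_j\ge 1/\Ex\bar X_{1j}^2$.

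It remains to bound $\max_i|\langle\bar X_i,\theta^{(j)}\rangle|$ at scale $(h_0 n)^{1/r}$. We use Markov's inequality at order $r'$ with \eqref{eq:bound_theta}:
\[
\Pr\left[\max_i |\langle \bar X_i,\theta^{(j)}\rangle| \geq \tau\right]\le n h_0 (\theta_j^{(j)})^{r'/2}/\tau^{r'} .
\]
Choosing $\tau = (\theta^{(j)}_j)^{1/2}(h_0 n)^{1/r}$ and using $\theta_j^{(j)}\le K_\theta$ yields a failure probability of order $(h_0 n)^{1-r'/r}$. Combined with the deviation bound, this gives a threshold $h_0^{1/r}\sqrt{12a\log p}\,n^{-1/2+1/r}$, after absorbing $K_\theta^{1/2}$, which matches the statement.

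The main obstacle is the bookkeeping in this last step, where the stated tail term is $K_\theta^{1/2}(h_0n)^{-(1/r-1/r')}$ rather than the cruder $(h_0n)^{1-r'/r}$. We would try applying Markov at order $r$ instead, via Jensen and Lyapunov from the $r'$-moment bound, and using $\Ex\max_i|Z_i|\le (\sum_i\Ex|Z_i|^{r'})^{1/r'}\le (nh_0)^{1/r'}(\theta_j^{(j)})^{1/2}$. First-order Markov at threshold $(h_0n)^{1/r}$ then gives exactly $K_\theta^{1/2}(h_0 n)^{1/r'-1/r}$, which is the claimed form, so we would take this route. The factor $\theta_j^{(j)}$ multiplying the threshold is the only remaining issue; we would keep it inside the probability bound, as the Markov computation does, rather than in the threshold.
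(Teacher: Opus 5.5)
Your argument follows the same route as the paper's proof: Lemma 8 of \cite{Oliveira2023-it} applied to $W_{ik}=\bar X_{ik}\langle\bar X_i,\theta^{(j)}\rangle$; Lemma 9 to trade the population second moment for an empirical one; the H\"older step $\frac1n\sum_i W_{ik}^2\le\max_i\langle\bar X_i,\theta^{(j)}\rangle^2\,\hat\nu_k^2/\sigma_k^2$; $t=a\log p-1$ with a union bound over $k$; and first-order Markov at level $(h_0n)^{1/r}$ using $\Ex\max_i|\langle\bar X_i,\theta^{(j)}\rangle|\le(nh_0)^{1/r'}(\theta^{(j)}_j)^{1/2}$. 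The last step is exactly where the tail $K_\theta^{1/2}(h_0n)^{1/r'-1/r}$ comes from.

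As written, however, your argument mishandles the coordinate $k=j$ in two ways.
\begin{itemize}
\item You apply Lemma 9 to the centred squares $\xi_{ik}^2$ and assert that the exponent is controlled by $\kappa$. But $\kappa$ is defined through the uncentred $W_{1k}$. For $k\neq j$ the two coincide. For $k=j$, nothing in the assumptions controls the ratio for $\xi_{1j}=W_{1j}-1$. If $W_{1j}$ equals $1$ except for rare bounded excursions, the uncentred ratio is near $1$ while the centred one is arbitrarily small. Likewise, Assumption \ref{cond4}(i) gives no lower bound on $\Var[W_{1j}]=\Ex W_{1j}^2-1$, contrary to your claim that the variance is bounded below.
\item You remove the leftover $2\delta_{jk}$ term via Lemma \ref{lemma:bound_max}. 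This adds a failure probability $6e^{-c_1n}$ and an additive constant under the square root, and neither appears in the stated bound. Your alternative, $\theta^{(j)}_j\ge 1$, is a population fact. It gives no empirical lower bound, so it cannot absorb this term either.
\end{itemize}

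The fix is the one the paper uses. Apply Lemma 9 to $W_{ik}^2$ itself, so that the exponent is precisely the $\kappa$ of the statement. On that event, use Jensen's inequality: $\delta_{jk}^2=(\Ex W_{1k})^2\le\Ex W_{1k}^2\le\frac{2}{n}\sum_i W_{ik}^2$. Then both $\Ex\xi_{1k}^2\le\Ex W_{1k}^2$ and $\frac1n\sum_i\xi_{ik}^2\le\frac{2}{n}\sum_i W_{ik}^2+2\delta_{jk}^2$ are bounded by a constant multiple of $\frac1n\sum_i W_{ik}^2$. This holds uniformly in $k$, including $k=j$, with no additional event. After that, your H\"older, union-bound and Markov steps go through unchanged.
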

\begin{proof}[Proof of Lemma \ref{lemma:self_norm2}]
For any $t>0$ and $k \in [p]$, with $\bar{X}_{ik} = \frac{X_{ik}-\mu_k}{\sigma_k}$: 
\begin{align*}
    & \Pr \left[ \left|\frac{1}{n} \sum_{i=1}^{n}\bar{X}_{ik}\langle \bar{X}_i,\theta^{(j)}\rangle -\delta_{jk}\right| \geq \sqrt{\frac{2(1+t)}{n^2} \sum_{i=1}^{n} \left(\bar{X}_{ik}\langle \bar{X}_i,\theta^{(j)}\rangle -\delta_{jk}\right)^2}\right] = \\ 
    &\Pr \left[  \left|\frac{1}{n}\sum_{i=1}^{n}\bar{X}_{ik}\langle \bar{X}_i,\theta^{(j)}\rangle -\delta_{jk}\right| \geq \sqrt{\frac{2(1+t)}{n^2} \sum_{i=1}^{n} \left(\bar{X}_{ik}\bar{X}_i^T\theta^{(j)}-\mathbb{E}\bar{X}_{1k}\bar{X}_1^T\theta^{(j)}\right)^2}\right] \leq 4e^{-t} \; ,
\end{align*}
by a direct application of Lemma 8 from \cite{Oliveira2023-it}, using the fact that $\bar{\Sigma} \theta^{(j)} = e_j$ from Assumption 4. Also, by Lemma 9 from \cite{Oliveira2023-it}:
\begin{equation*}
    \Pr \left[\Ex\left[(\bar{X}_{1k}\bar{X}_1^T\theta^{(j)})^2\right]\geq\frac{2}{n} \sum_{i=1}^{n} \left(\bar{X}_{ik}\bar{X}_i^T\theta^{(j)}\right)^2\right] \leq \exp\left({-\frac{n\Ex\left[(\bar{X}_{1k}\bar{X}_1^T\theta^{(j)})^2\right]^2}{2\sqrt{2}\Ex\left[(\bar{X}_{1k}\bar{X}_1^T\theta^{(j)})^4\right]}}\right) \; ,
\end{equation*}
which implies, by the fact that $(a-b)^2 \leq 2(a^2+b^2)$ for any real numbers $a$ and $b$, and Jensen's inequality:
\begin{eqnarray*}
    \Pr \left[ \left|\frac{1}{n} \sum_{i=1}^{n}\bar{X}_{ik}\langle \bar{X}_i,\theta^{(j)}\rangle -\delta_{jk}\right| \geq \sqrt{\frac{12(1+t)}{n} \frac{1}{n} \sum_{i=1}^{n} \left(\bar{X}_{ik}\langle \bar{X}_i,\theta^{(j)}\rangle \right)^2}\right] \\
    \leq 4e^{-t} +\exp\left({-\frac{n\Ex\left[(\bar{X}_{1k}\bar{X}_1^T\theta^{(j)})^2\right]^2}{2\sqrt{2}\Ex\left[(\bar{X}_{1k}\bar{X}_1^T\theta^{(j)})^4\right]}}\right)\; ,
\end{eqnarray*}
Then, by Hölder's inequality and the definition of $\hat{\nu}_k$:
\begin{eqnarray*}
        \Pr \left[\left|\frac{\sigma_k}{\hat{\nu}_k}\right|\left|\frac{1}{n} \sum_{i=1}^{n}\bar{X}_{ik}\langle \bar{X}_i,\theta^{(j)}\rangle -\delta_{jk}\right| \geq \sqrt{\frac{12(1+t)}{n}  \max_{i \in [n]} \langle \bar{X}_i,\theta^{(j)}\rangle^2}\right] 
        \\ \leq 4e^{-t} +\exp\left({-\frac{n\Ex\left[(\bar{X}_{1k}\bar{X}_1^T\theta^{(j)})^2\right]^2}{2\sqrt{2}\Ex\left[(\bar{X}_{1k}\bar{X}_1^T\theta^{(j)})^4\right]}}\right)\; ,
\end{eqnarray*}
which implies, taking $t=a \log p -1$:
\begin{align*}
        & \Pr \left[\max_{k \in [p]}\left|\frac{{\sigma}_k}{\hat{\nu}_k}\right|\left|\frac{1}{n} \sum_{i=1}^{n}\bar{X}_{ik}\langle \bar{X}_i,\theta^{(j)}\rangle -\delta_{jk}\right| \geq \sqrt{\frac{12 a \log p}{n}  \max_{i \in [n]} \langle \bar{X}_i,\theta^{(j)}\rangle^2}\right] \\
        & \; \; \; \leq 4e^{\log p-a\log p +1} +\exp\left({\log p -\min_{k \in [p]}\frac{n\Ex\left[(\bar{X}_{1k}\bar{X}_1^T\theta^{(j)})^2\right]^2}{2\sqrt{2}\Ex\left[(\bar{X}_{1k}\bar{X}_1^T\theta^{(j)})^4\right]}}\right) \\ 
        & \; \; \; \leq \frac{4e}{p^{a-1}} + e^{-c_2 n}\; ,
\end{align*}
by the hypothesis on $n$ in the statement of the lemma. Also, notice that, by Markov's inequality and hypothesis (17) in condition 4:
\begin{eqnarray*}
    \Pr\left[\max_{i \in [n]} |\langle \bar{X}_i,\theta^{(j)}\rangle|\geq n^{1/r} h_{0}^{1/r}\right] \leq \frac{\Ex[\max_{i \in [n]} |\langle \bar{X}_i,\theta^{(j)}\rangle|]}{n^{1/r} h_{0}^{1/r}}  
    & \leq & n^{1/r'-1/r}\frac{\Ex[|\langle \bar{X}_1,\theta^{(j)}\rangle|^{r'}]^{1/r'}}{h_{0}^{1/r}} \\ 
    & \leq & n^{1/r'-1/r} h_{0}^{1/r'-1/r}\left(\theta_{j}^{(j)}\right)^{1/2}
\end{eqnarray*}
then, by equation \eqref{eq:bounded_theta} in condition 5:
\begin{equation*}
    \Pr \left[\max_{k \in [p]}\left|\frac{\sigma_k}{\hat{\nu}_k}\right|\left|\frac{1}{n} \sum_{i=1}^{n}\bar{X}_{ik}\langle \bar{X}_i,\theta^{(j)}\rangle -\delta_{jk}\right| \geq \frac{h_{0}^{1/r}\sqrt{12a\log p}}{n^{1/2-1/r}}\right]
    \leq \frac{4e}{p^{a-1}} + e^{-c_2 n} + \frac{K_{\theta}^{1/2}}{(h_0 n)^{\frac{1}{r}-\frac{1}{r'}}}     \; ,
\end{equation*}
which is the inequality stated in the lemma.  
\end{proof}

Now, we give two lemmata that provide the feasibility of $ \theta^{(j)}$ with probability going to one for Method 1 (problem \eqref{eq:opt1}). 

\begin{lemma}
\label{lemma:bound_viable}
Suppose the conditions of Assumption \ref{cond2} hold, as well as the assumptions of Lemma \ref{lemma:self_norm2}. Then, for $\mu =  \frac{C\sqrt{\log p}}{n^{1/2-1/r}}$ with some $C>0$ some large enough absolute constant, it holds that:
\begin{equation*}
    \Pr\left[ \|\tilde{\Sigma}_n \theta^{(j)} - e_j\|_{\infty} > \mu\right] \xrightarrow[n,p\rightarrow \infty]{}0
\end{equation*}
\end{lemma}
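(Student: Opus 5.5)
The plan is to write $\tilde{\Sigma}_n$ in terms of the population-standardized design $\bar{X}$ and random diagonal rescalings, and then reduce $\|\tilde{\Sigma}_n\theta^{(j)}-e_j\|_\infty$ to the self-normalized bound of Lemma \ref{lemma:self_norm2} plus remainders of smaller order. Let $b\in\R^p$ have entries $b_k=(\hat{\mu}_k-\mu_k)/\sigma_k$, and let $R$ be diagonal with $R_{kk}=\sigma_k/\hat{\sigma}_k$. Set $G=\frac1n\bar{X}^T\bar{X}-bb^T$. Since $X-\Ex X=\bar{X}\,\mathrm{diag}(\sigma_k)$, equation \eqref{eq:matrix_decomp} gives $\tilde{\Sigma}_n=RGR$. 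Using $\bar{\Sigma}\theta^{(j)}=e_j$, I would decompose
\begin{equation*}
\tilde{\Sigma}_n\theta^{(j)}-e_j
= R\Big(\tfrac1n\bar{X}^T\bar{X}\theta^{(j)}-e_j\Big) - R\,b\,\langle b,\theta^{(j)}\rangle + (R-I)e_j + \tilde{\Sigma}_n(I-R^{-1})\theta^{(j)} .
\end{equation*}
This follows from $RGR\theta=RG\theta+RGR(I-R^{-1})\theta$. I will show that each of the four terms is at most $\mu/4$ in $\ell^\infty$ with probability tending to one, and conclude by a union bound.

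\emph{First term.} Write $R_{kk}=(\sigma_k/\hat{\nu}_k)(\hat{\nu}_k/\hat{\sigma}_k)$. On the event of Lemma \ref{lemma:bound_max}, $\hat{\nu}_k/\hat{\sigma}_k<2$ for all $k$, outside a set of probability $5e^{-c_1n}$. Lemma \ref{lemma:self_norm2} then bounds the first term by $2h_0^{1/r}\sqrt{12a\log p}/n^{1/2-1/r}$, outside a set of probability $4e\,p^{1-a}+e^{-c_2n}+K_\theta^{1/2}(h_0n)^{1/r'-1/r}$. Because $r'>r$, this probability vanishes, and the bound is $\le\mu/4$ once $C\ge 8h_0^{1/r}\sqrt{12a}$. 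This step is why $\mu$ carries the factor $n^{1/r}$.

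\emph{Second term.} Here $|R_{kk}b_k|=|\hat{\mu}_k-\mu_k|/\hat{\sigma}_k\le\sqrt{a\log p/n}$ by Lemma \ref{lemma:self_norm1}. Also $|\langle b,\theta^{(j)}\rangle|\le\|\theta^{(j)}\|_1\max_l|b_l|$, and $|b_l|\le(\hat{\sigma}_l/\sigma_l)\sqrt{a\log p/n}$. Given an upper bound on $\max_l\hat{\sigma}_l/\sigma_l$, the second term is $O(\|\theta^{(j)}\|_1\log p/n)$. By \eqref{eq:sparse_theta} this is $o(\sqrt{\log p}/n^{1/2+1/r})=o(\mu)$.

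\emph{Third term.} Its only nonzero coordinate is $\sigma_j/\hat{\sigma}_j-1$, a single fixed coordinate. With four moments, $\hat{\sigma}_j^2/\sigma_j^2\to1$ at rate $O_p(n^{-1/2})=o_p(\mu)$.

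\emph{Fourth term (main obstacle).} Every entry of the sample correlation matrix $\tilde{\Sigma}_n$ is at most $1$ in absolute value, so the fourth term is bounded by $\max_k|1-\hat{\sigma}_k/\sigma_k|\,\|\theta^{(j)}\|_1$. This needs a uniform-in-$k$ two-sided bound on $\hat{\sigma}_k/\sigma_k$. The earlier lemmas only give the lower-tail statement $\sigma_k/\hat{\sigma}_k<4$. For the upper tail and the rate I would use the $2g$-moment bound \eqref{eq:cond2_moment} and the $3r'$ moments in Assumption \ref{cond4}(iv). The idea is to truncate $\bar{X}_{ik}^2$ at level $n^{1/g}$-type thresholds, apply a Bernstein bound plus a union bound over $p$ to the truncated part, and use Markov's inequality on $\max_i$ for the remainder. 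The goal is $\max_k|\hat{\sigma}_k^2/\sigma_k^2-1|=O_p(\sqrt{\log p}\,n^{-1/2+2/g})$. If this is too weak to give $o(\mu)$ after multiplying by $\|\theta^{(j)}\|_1$, I would instead move $R^{-1}$ inside Lemma \ref{lemma:self_norm2}. That is, I would redo its self-normalized argument for the rescaled vector $\mathrm{diag}(\hat{\sigma}_k/\sigma_k)\theta^{(j)}$ on the high-probability event $\max_k\hat{\sigma}_k/\sigma_k\le 2$, which only costs constants. I expect this step to be where the real work lies.
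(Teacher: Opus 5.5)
Your decomposition $\tilde{\Sigma}_n=RGR$ is correct, and you handle the first and third terms correctly. The first term is exactly the paper's $\Gamma_3$ and is treated the same way, via Lemmas \ref{lemma:self_norm2} and \ref{lemma:bound_max}.

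\textbf{The gap.} The fourth term is not resolved, and the second term relies on the same missing ingredient. After the valid bound $|(\tilde{\Sigma}_n)_{km}|\le 1$, you pass to $\|\theta^{(j)}\|_1\max_m|1-\hat{\sigma}_m/\sigma_m|$. That step forces you to seek a uniform-in-$m$ upper-tail bound on $\hat{\sigma}_m/\sigma_m$; the second term also invokes an upper bound on $\max_l\hat{\sigma}_l/\sigma_l$. No such bound is available under the paper's hypotheses. Take independent columns with Pareto-type tails, which is allowed by Assumptions \ref{cond2}--\ref{cond5} with $\theta^{(j)}=e_j$. The largest of the $np$ entries $\bar{X}_{ik}^2$ then grows like a power of $np$. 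Hence $\max_m\hat{\sigma}_m^2/\sigma_m^2$ diverges once $\log p$ grows polynomially in $n$, which the sparsity conditions permit. For the same reason, your truncation-plus-union-bound scheme cannot control the remainder, and this is precisely why Lemma \ref{lemma:bound_max} only controls $\sigma_k/\hat{\sigma}_k$. The fallback does not rescue the argument. It presupposes the same event $\max_k\hat{\sigma}_k/\sigma_k\le 2$. Moreover, Lemma \ref{lemma:self_norm2} cannot be rerun for the data-dependent vector $\mathrm{diag}(\hat{\sigma}_k/\sigma_k)\theta^{(j)}$, because its summands are then neither independent across $i$ nor centered at $\delta_{jk}$.

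\textbf{The missing idea: no maximum over coordinates is needed.} Since $|(\tilde{\Sigma}_n)_{km}|\le 1$ for every $k$, all coordinates of the fourth term are bounded by the single nonnegative variable $\sum_m|\theta^{(j)}_m|\,|1-\hat{\sigma}_m/\sigma_m|$. By Lemma \ref{lemma:variance}, its expectation is at most $\|\theta^{(j)}\|_1\sqrt{2\kappa_1/n}$. Markov's inequality then gives $O_p(\|\theta^{(j)}\|_1/\sqrt{n})$ with no union bound over $p$. This is how the paper treats $\Gamma_4$. It goes through $\hat{\nu}_m$ and pushes the $\hat{\nu}_m/\hat{\sigma}_m$ discrepancy into $\Gamma_2$, where $|\hat{\nu}_m/\hat{\sigma}_m-1|\le((\hat{\mu}_m-\mu_m)/\hat{\sigma}_m)^2$ is self-normalized and controlled by Lemma \ref{lemma:self_norm1}. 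Your decomposition lets you apply Lemma \ref{lemma:variance} to $\hat{\sigma}_m$ directly, which is slightly cleaner.

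\textbf{Fixing the second term.} Avoid $\max_l\hat{\sigma}_l/\sigma_l$ by noting that $\langle b,\theta^{(j)}\rangle=\frac{1}{n}\sum_i\langle\bar{X}_i,\theta^{(j)}\rangle$ has mean zero and variance $\theta^{(j)}_j/n\le K_\theta/n$. Combined with $\max_k|R_{kk}b_k|\le\sqrt{a\log p/n}$, that term is $O_p(\sqrt{\log p}/n)$. This is sharper than the paper's bound $\|\theta^{(j)}\|_1\log p/n$ for $\Gamma_1$.

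\textbf{A remaining caveat, shared with the paper.} The bound $O_p(\|\theta^{(j)}\|_1/\sqrt{n})$ is $o_p(\mu)$ only if $\|\theta^{(j)}\|_1=o(n^{1/r}\sqrt{\log p})$. Condition \eqref{eq:sparse_theta} implies this when $r\le 4$, but not in general for $r>4$. The paper's final step for $\Gamma_4$ needs exactly this condition, so the comparison with $\mu$ that you flagged is a real issue and is not fully settled by the paper's argument either.
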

\begin{proof}[Proof of Lemma \ref{lemma:bound_viable}]
The main idea of this proof is to break $\|\tilde{\Sigma}_n \theta^{(j)} - e_j\|_{\infty}$ in four terms that can be bounded using lemmata \ref{lemma:self_norm1} through \ref{lemma:self_norm2}. Let $\delta_{jk} =1$ for $k=j$ and zero otherwise and define $v_{ik} = \frac{X_{ik}-\mu_k}{\hat{\nu}_k}$, with $\hat{\nu}_k^2 = \frac{1}{n}\sum_{i=1}^{n}\left(X_{ik} -\mathbb{E}X_{1k}\right)^2$. We will write:
\begin{equation*}
    \|\tilde{\Sigma}_n \theta^{(j)} - e_j\|_{\infty} 
    \leq \Gamma_1 + \Gamma_2+ \Gamma_3+ \Gamma_4 \; ,
\end{equation*}
where:
\begin{eqnarray*}
    \Gamma_{1} & = & \|\theta^{(j)}\|_1 \max_{k\in [p]}\left(\frac{\hat{\mu}_k - \mu_k}{\hat{\sigma}_k}\right)^2 \; ,\\
    \Gamma_{2} & = & \max_{k \in [p]}\left|\frac{\hat{\nu}_k}{\hat{\sigma}_k}\right|\left|\frac{1}{n}\sum_{i=1}^{n} v_{ik}\sum_{m = 1}^{p}\theta^{(j)}_m v_{im}\left(\frac{\hat{\nu}_m}{\hat{\sigma}_m}-1\right)\right| + \left|\frac{\hat{\nu}_j}{\hat{\sigma}_j}-1\right| \; , \\
    \Gamma_{3} & = & \max_{k \in [p]} \left|\frac{\sigma_k}{\hat{\sigma}_k}\right| \left|\frac{1}{n}\sum_{i=1}^{n} \bar{X}_{ik}\langle \bar{X}_i,\theta^{(j)}\rangle-\delta_{jk} \right| \; , \\
    \Gamma_{4} & = & \max_{k \in [p]} \left|\frac{\hat{\nu}_k}{\hat{\sigma}_k}\right| \left|\frac{1}{n}\sum_{i=1}^{n} v_{ik}\langle v_i-\bar{X}_i,\theta^{(j)}\rangle\right|+ \left|\frac{\hat{\nu}_j}{\hat{\sigma}_j}\right|\left|\frac{\sigma_j} {\hat{\nu}_j}-1\right|\; .
\end{eqnarray*} 
Indeed, by a simple manipulation of the definitions given in Section \ref{sec:notation}:
\begin{align*}
    &\|\tilde{\Sigma}_n \theta^{(j)} - e_j\|_{\infty} \\ & =  \max_{k \in [p]} \left|\frac{1}{n}\sum_{i=1}^{n}\frac{X_{ik}-\mu_k}{\hat{\sigma}_k}\sum_{m = 1}^{p}\theta^{(j)}_m \left(\frac{X_{im}-\mu_m}{\hat{\sigma}_m}\right) + \frac{\hat{\mu}_k - \mu_k}{\hat{\sigma}_k}\sum_{m = 1}^{p}\theta^{(j)}_m \left(\frac{\mu_m-\hat{\mu}_m}{\hat{\sigma}_m}\right) - \delta_{jk}\right|\\
    & \leq  \max_{k \in [p]} \left|\frac{1}{n}\sum_{i=1}^{n}\frac{X_{ik}-\mu_k}{\hat{\sigma}_k}\sum_{m = 1}^{p}\theta^{(j)}_m \left(\frac{X_{im}-\mu_m}{\hat{\sigma}_m}\right) -\delta_{jk}\right| + \underbrace{\|\theta^{(j)}\|_1 \max_{k\in [p]}\left(\frac{\hat{\mu}_k - \mu_k}{\hat{\sigma}_k}\right)^2}_{\Gamma_{1}} \; ,
\end{align*}
by the triangle inequality and Hölder's inequality. On the other hand, by the definition of $v_{ik}$ above:
\begin{align*}
     &\max_{k \in [p]} \left|\frac{1}{n}\sum_{i=1}^{n}\frac{X_{ik}-\mu_k}{\hat{\sigma}_k}\sum_{m = 1}^{p}\theta^{(j)}_m \left(\frac{X_{im}-\mu_m}{\hat{\sigma}_m}\right)-\delta_{jk}\right| \nonumber\\
     & \leq  \max_{k \in [p]}\left|\frac{\hat{\nu}_k}{\hat{\sigma}_k}\right|\left|\frac{1}{n}\sum_{i=1}^{n}v_{ik} \langle\theta^{(j)}, v_i\rangle  -\delta_{jk}+ \frac{1}{n}\sum_{i=1}^{n} v_{ik}\sum_{m = 1}^{p}\theta^{(j)}_m v_{im}\left(\frac{\hat{\nu}_m}{\hat{\sigma}_m}-1\right)\right| + \left|\frac{\hat{\nu}_j}{\hat{\sigma}_j}-1\right|\nonumber\\
     & \leq  {\max_{k \in [p]}\left|\frac{\hat{\nu}_k}{\hat{\sigma}_k}\right|\left|\frac{1}{n}\sum_{i=1}^{n}v_{ik} \langle\theta^{(j)}, v_i\rangle  -\delta_{jk} \right|}
     \\ &  \; + \underbrace{\max_{k \in [p]}\left|\frac{\hat{\nu}_k}{\hat{\sigma}_k}\right|\left|\frac{1}{n}\sum_{i=1}^{n} v_{ik}\sum_{m = 1}^{p}\theta^{(j)}_m v_{im}\left(\frac{\hat{\nu}_m}{\hat{\sigma}_m}-1\right)\right| + \left|\frac{\hat{\nu}_j}{\hat{\sigma}_j}-1\right|}_{\Gamma_2} \; .
\end{align*}
But also, again by the triangle inequality:
\begin{align*}
    &\max_{k \in [p]}\left|\frac{\hat{\nu}_k}{\hat{\sigma}_k}\right|\left|\frac{1}{n}\sum_{i=1}^{n}v_{ik} \langle\theta^{(j)}, v_i\rangle  -\delta_{jk} \right|
    \\ & \leq \underbrace{\max_{k \in [p]} \left|\frac{\sigma_k}{\hat{\sigma}_k}\right| \left|\frac{1}{n}\sum_{i=1}^{n} \bar{X}_{ik}\langle \bar{X}_i,\theta^{(j)}\rangle-\delta_{jk} \right|}_{\Gamma_3} \\ & \
     + \underbrace{\max_{k \in [p]} \left|\frac{\hat{\nu}_k}{\hat{\sigma}_k}\right|  \left|\frac{1}{n}\sum_{i=1}^{n} v_{ik}\langle v_i-\bar{X}_i,\theta^{(j)}\rangle\right|+ \left|\frac{\hat{\nu}_j}{\hat{\sigma}_j}\right|\left|\frac{\sigma_j} {\hat{\nu}_j}-1\right|}_{\Gamma_4} \; .
\end{align*}
With the above decomposition, we now deal with each term separately, and conclude by a union bound:
\begin{equation}
\label{eq:union_bound_1}
    \Pr\left[ \|\tilde{\Sigma}_n \theta^{(j)} - e_j\|_{\infty} > \mu\right] \leq \sum_{i=1}^{4} \Pr[\Gamma_i > \mu/4] \; .
\end{equation}
\begin{enumerate}
    \item First, by Lemma \ref{lemma:self_norm1}:
\begin{equation*}
    \Pr[\Gamma_1 > \mu/4] \leq \frac{4e}{p^{a/24-1}} + 6 e^{-c_1 n} + \Pr\left[\|\theta^{(j)}\|_1 \frac{a \log p}{n} > \mu/4\right] \; ,
\end{equation*}
for $a$ and $c_1$ as in the statement of Lemma \ref{lemma:self_norm1}. Under Assumption 2, $c_1$ can be taken uniformly bounded by below. Then, the above probability vanishes under the hypothesis in $\theta^{(j)}$ given in equation \eqref{eq:sparse_theta} of Assumption 5 and $\mu$ as defined in the statement of the theorem.\\
\item For the $\Gamma_2$ term, notice that:
\begin{align*}
    & \max_{k \in [p]} \left|\frac{\hat{\nu}_k}{\hat{\sigma}_k}\right| \left|\frac{1}{n}\sum_{i=1}^{n} v_{ik}\sum_{m = 1}^{p}\theta^{(j)}_m v_{im}\left(\frac{\hat{\nu}_m}{\hat{\sigma}_m}-1\right)\right| \\ &  \leq  \max_{k \in [p]} \left|\frac{\hat{\nu}_k}{\hat{\sigma}_k}\right|  \sum_{m=1}^{p}{\frac{|\theta^{(j)}_m|}{n}} \left|\left(\frac{\hat{\nu}_m}{\hat{\sigma}_m}-1\right)\sum_{i=1}^{n} v_{ik} v_{im} \right| \\
    & \leq  \max_{k \in [p]} \left|\frac{\hat{\nu}_k}{\hat{\sigma}_k}\right| \sum_{m=1}^{p}|\theta^{(j)}_m| \left|\frac{\hat{\nu}_m}{\hat{\sigma}_m}-1\right|\\
    & \leq  \|\theta^{(j)}\|_1 \max_{k \in [p]} \left|\frac{\hat{\nu}_k}{\hat{\sigma}_k}\right| \max_{m \in [p]} \left|\frac{\hat{\nu}_m}{\hat{\sigma}_m}-1\right| \, ,
\end{align*}
where we use the triangle inequality, Cauchy-Schwarz inequality, the definitions of $v_{ik}$ and $\hat{\nu}_k$, and Hölder's inequality. Both maxima above can be bounded with high probability using Lemma \ref{lemma:self_norm1} and Lemma \ref{lemma:bound_max}. This follows from the direct observation that:
\begin{equation*}
    \left|\frac{\hat{\nu}_k}{\hat{\sigma}_k}-1\right| \leq \left|\frac{\hat{\nu}^2_k}{\hat{\sigma}^2_k}-1\right| = \left(\frac{\mu_k - \hat{\mu}_k}{\hat{\sigma}_k}\right)^2 \; .
\end{equation*}
Then:
\begin{equation*}
    \Pr[\Gamma_2 > \mu/4] \leq \frac{4e}{p^{a/24-1}} + 11 e^{-c_1 n} + \Pr\left[\|\theta^{(j)}\|_1 \frac{2a \log p}{n} > \mu/8\right] + \Pr\left[\left|\frac{\hat{\nu}_j}{\hat{\sigma}_j}-1\right| > \mu/8\right]\; .
\end{equation*}
Similarly to $\Gamma_1$, this probability goes to zero under Assumption 2 and equation \eqref{eq:sparse_theta} of Assumption 5, where we can use the same argument with Lemma \ref{lemma:self_norm1} to bound the last term on the right hand side above.\\

\item Also:
\begin{equation*}
    \Gamma_3 \leq \max_{m \in [p]} \left|\frac{\hat{\nu}_m}{\hat{\sigma}_m}\right| \max_{k \in [p]} \left|\frac{\sigma_k}{\hat{\nu}_k}\right|\left|\frac{1}{n}\sum_{i=1}^{n} \bar{X}_{ik}\langle \bar{X}_i,\theta^{(j)}\rangle-\delta_{jk} \right| \; .
\end{equation*}
Then, by Lemma \ref{lemma:self_norm2} and Lemma \ref{lemma:bound_max}:
\begin{equation*}
    \Pr[\Gamma_3 > \mu/4] \leq \frac{4e}{p^{a-1}} + e^{-c_2 n} + \frac{K_{\theta}^{1/2}}{(h_0 n)^{\frac{1}{r}-\frac{1}{r'}}}  + 5 e^{-c_1 n} + \Pr\left[ \frac{2h_{0}^{1/r}\sqrt{12a\log p}}{n^{1/2-1/r}} > \mu/4\right]\; ,
\end{equation*}
with $c_2$ as in the statement of Lemma \ref{lemma:self_norm2}. This probability vanishes. Indeed $c_1$ can be taken uniformly bounded by below by Assumption 2, with the same following for $c_2$ by points (i) and (ii) of Assumption 5. We conclude choosing some $a>1$ and $C$ sufficiently large in the definition of $\mu$.\\

\item Lastly, $\Gamma_4$ can be handled with a similar argument to the one used for $\Gamma_2$. Indeed:
\begin{eqnarray*}
    \max_{k \in [p]} \left|\frac{1}{n}\sum_{i=1}^{n} v_{ik}\langle v_i-\bar{X}_i,\theta^{(j)}\rangle\right|  & \leq &
    \max_{k \in [p]}  \sum_{m=1}^{p}\frac{|\theta^{(j)}_m|}{n} \left|\left(\frac{\hat{\nu}_m}{\sigma_m}-1\right)\sum_{i=1}^{n} v_{ik} v_{im} \right| \\
    & \leq &  \sum_{m=1}^{p}|\theta^{(j)}_m| \left|\frac{\hat{\nu}_m}{\sigma_m}-1\right| \, ,
\end{eqnarray*}
by the triangle inequality and Cauchy-Schwarz inequality. But then, by Markov's inequality, given $\xi>0$:
\begin{equation*}
    \Pr \left[ \max_{k \in [p]} \left|\frac{1}{n}\sum_{i=1}^{n} v_{ik}\langle v_i-\bar{X}_i,\theta^{(j)}\rangle\right|~\geq \xi\right] \leq \frac{ \sum_{m=1}^{p}|\theta^{(j)}_m| \mathbb{E}\left[\left|\frac{\hat{\nu}_m}{\sigma_m}-1\right|\right]}{\xi} \leq \frac{\sqrt{\kappa_1}\|\theta^{(j)}\|_1}{\kappa_0\xi \sqrt{n}} \; ,
\end{equation*}
with $\kappa_0$, $\kappa_1$ as in Assumption 2, since:
\begin{equation}
\label{eq:variance_1}
    \mathbb{E}\left[\left|\frac{\hat{\nu}_m}{\sigma_m}-1\right|\right] \leq \sqrt{\frac{\Var[\hat{\nu}_m^2]}{\sigma_m^4}} \leq \frac{\sqrt{\Ex[(X_{1m}-\Ex[X_{1m}])^4]}}{\sqrt{n}\sigma_m^2} \; .
\end{equation}
Therefore:
\begin{align*}
    \Pr[\Gamma_4 > \mu/4] & \leq  \frac{8\sqrt{\kappa_1}\|\theta^{(j)}\|_1}{\kappa_0 \sqrt{n}\mu} + \Pr\left[\left|\frac{\hat{\nu}_j}{\hat{\sigma}_j}\right|\left|\frac{\sigma_j} {\hat{\nu}_j}-1\right| > \mu/8\right] \\ &=\frac{8\sqrt{\kappa_1}\|\theta^{(j)}\|_1}{\kappa_0 \sqrt{n}\mu} + \Pr\left[\left|\frac{\sigma_j}{\hat{\sigma}_j}\right|\left|\frac{\hat{\nu}_j} {\sigma_j}-1\right| > \mu/8\right]\\
    &\leq \frac{8\sqrt{\kappa_1}\|\theta^{(j)}\|_1}{\kappa_0 \sqrt{n}\mu} + \Pr\left[4\left|\frac{\hat{\nu}_j} {\sigma_j}-1\right| > \mu/8\right] +6 e^{-c_1 n}\\
    &\leq \frac{8\sqrt{\kappa_1}\|\theta^{(j)}\|_1}{\kappa_0 \sqrt{n}\mu} + \frac{8\sqrt{\kappa_1}}{\kappa_0\mu \sqrt{n}} +6 e^{-c_1 n}\;,
\end{align*}
by Lemma \ref{lemma:bound_max} and Markov's inequality combined with the equation \eqref{eq:variance_1}. This probability vanishes under equation \eqref{eq:sparse_theta} of Assumption 5 and the chosen value for $\mu$. This bounds the final term in the sum in \eqref{eq:union_bound_1} and concludes the proof. 
\end{enumerate}
It should be clear from the proof above that it holds under the weaker assumptions for Method 1. Since the assumptions for Method 2 are stronger, it clearly holds true in that case as well.
\end{proof}

\begin{lemma}
\label{lemma:infty_bound}
Assume conditions \ref{cond1} through \ref{cond5} for Method 1 hold. Then, with $\theta^{(j)}$ as in Assumption \ref{cond4} and taking $\zeta = 2 n^{1/r}$
\begin{equation*}
    \Pr\left[ \|\tilde{X} \theta^{(j)}\|_{\infty} > \zeta \right] \xrightarrow[n,p\rightarrow \infty]{}0
\end{equation*}
\end{lemma}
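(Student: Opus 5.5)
We need to show $\max_i |\langle \tilde X_i,\theta^{(j)}\rangle| \le 2n^{1/r}$ with probability tending to one. The plan is to compare $\tilde X_i$ with $\bar X_i$, the population-standardized row. Entrywise,
\begin{equation*}
\tilde X_{ik} = \frac{\sigma_k}{\hat\sigma_k}\bar X_{ik} + \frac{\mu_k-\hat\mu_k}{\hat\sigma_k},
\end{equation*}
so
\begin{equation*}
\langle \tilde X_i,\theta^{(j)}\rangle = \langle \bar X_i,\theta^{(j)}\rangle + \sum_k \theta^{(j)}_k\Big(\frac{\sigma_k}{\hat\sigma_k}-1\Big)\bar X_{ik} + \sum_k\theta^{(j)}_k\frac{\mu_k-\hat\mu_k}{\hat\sigma_k}.
\end{equation*}
We bound the three terms separately and show the first is at most $n^{1/r}h_0^{1/r}$ and the other two are $o_p(n^{1/r})$. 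The absolute constant in front of $n^{1/r}$ can be absorbed by taking $c'$ large, as in Theorem \ref{thm:main}.

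\textbf{First term.} This is exactly the bound already obtained in the proof of Lemma \ref{lemma:self_norm2}. By Markov's inequality together with \eqref{eq:bound_theta} and \eqref{eq:bounded_theta},
\begin{equation*}
\Pr\big[\max_i|\langle\bar X_i,\theta^{(j)}\rangle|\ge (h_0n)^{1/r}\big]\le K_\theta^{1/2}(h_0n)^{1/r'-1/r}\to0,
\end{equation*}
using $r'>r$.

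\textbf{Third term.} It is bounded by $\|\theta^{(j)}\|_1\max_k|\mu_k-\hat\mu_k|/\hat\sigma_k$. By Lemma \ref{lemma:self_norm1} this is at most $\|\theta^{(j)}\|_1\sqrt{a\log p/n}$ with probability tending to one. By \eqref{eq:sparse_theta} that quantity is $o(n^{-1/r})$, which is certainly $o(n^{1/r})$.

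\textbf{Second term (the main obstacle).} The factor $\max_i|\bar X_{ik}|$ is not uniformly controlled over $k\in[p]$ under only polynomial moments, so we avoid taking a maximum over $k$. Write $\sigma_k/\hat\sigma_k - 1 = (\sigma_k/\hat\nu_k-1)\hat\nu_k/\hat\sigma_k + (\hat\nu_k/\hat\sigma_k-1)$.
\begin{itemize}
\item The ratios $\hat\nu_k/\hat\sigma_k$ are uniformly at most $2$ by Lemma \ref{lemma:bound_max}.
\item $|\hat\nu_k/\hat\sigma_k-1|\le((\mu_k-\hat\mu_k)/\hat\sigma_k)^2\le a\log p/n$ uniformly in $k$ by Lemma \ref{lemma:self_norm1}.
\item It then remains to bound $\max_i\sum_k|\theta^{(j)}_k|\,|\hat\nu_k/\sigma_k-1|\,|\bar X_{ik}|$ (after using $\sigma_k/\hat\nu_k-1=(\sigma_k/\hat\nu_k)(1-\hat\nu_k/\sigma_k)$ with $\sigma_k/\hat\nu_k$ controlled via Lemma \ref{lemma:bound_max}), together with $\frac{a\log p}{n}\max_i\sum_k|\theta_k^{(j)}||\bar X_{ik}|$.
\end{itemize}
We handle these by Markov's inequality on the whole sum, replacing $\max_i$ by $\sum_i$ raised to a power. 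For the second quantity, assumption (iv) of Assumption \ref{cond4} gives $\Ex\max_i|\bar X_{ik}|\le (nK_2)^{1/(3r')}$. Hence its expectation is at most $\|\theta^{(j)}\|_1\frac{\log p}{n}n^{1/(3r')}$, which is $o(n^{1/r})$.

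For the first quantity we use Cauchy--Schwarz in expectation:
\begin{equation*}
\Ex\big[|\hat\nu_k/\sigma_k-1|\max_i|\bar X_{ik}|\big]\le \Ex\big[(\hat\nu_k/\sigma_k-1)^2\big]^{1/2}\,\Ex\big[\max_i\bar X_{ik}^2\big]^{1/2}.
\end{equation*}
By \eqref{eq:variance_1} and Assumption \ref{cond2}, the first factor is $O(n^{-1/2})$. The second factor is $O(n^{1/(3r')})$ by (iv) of Assumption \ref{cond4}, since $3r'\ge 6>2$. Summing over $k$ with weights $|\theta_k^{(j)}|$ gives an expectation of $O\big(\|\theta^{(j)}\|_1 n^{-1/2+1/(3r')}\big)$. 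By \eqref{eq:sparse_theta} this is $o(n^{1/r'/3-1/r}/\sqrt{\log p})$, hence $o(n^{1/r})$, and Markov's inequality gives the claim.

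A union bound over the vanishing probabilities of all three terms then yields $\Pr[\|\tilde X\theta^{(j)}\|_\infty>\zeta]\to0$.
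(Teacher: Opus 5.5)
Your proof is correct, but it takes a different route from the paper's. The paper never separates centering from scaling. It writes $\|\tilde X\theta^{(j)}\|_\infty\le\|(\tilde X-\bar X)\theta^{(j)}\|_r+\|\bar X\theta^{(j)}\|_{r'}$, and applies Jensen with weights $|\theta^{(j)}_k|/\|\theta^{(j)}\|_1$ to get $\|(\tilde X-\bar X)\theta^{(j)}\|_r^r\le\|\theta^{(j)}\|_1^r\sum_k\frac{|\theta^{(j)}_k|}{\|\theta^{(j)}\|_1}\sum_i|\tilde X_{ik}-\bar X_{ik}|^r$. It then concludes with one Markov step based on the entrywise estimate $\Ex[|\tilde X_{ik}-\bar X_{ik}|^r\mathbf{1}_{\Gamma_n}]\le C_r n^{-r/2}$ of Lemma \ref{lemma:moment_bound}. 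That estimate is a Rosenthal-type computation, and it is where the $3r'$-moment hypothesis (iv) of Assumption \ref{cond4} is really used. The result is the explicit bound $C_1\|\theta^{(j)}\|_1^r n^{-r/2}+h_0K_\theta^{r'/2}n^{1-r'/r}+6e^{-c_1n}$.

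You instead isolate the centering error, which is identical across rows and is handled directly by the self-normalized bound of Lemma \ref{lemma:self_norm1}. You treat the rescaling error through the identity linking $\sigma_k/\hat\sigma_k$ to $\hat\nu_k$, the uniform ratio bounds of Lemmas \ref{lemma:self_norm1} and \ref{lemma:bound_max}, an $L^2$ bound on $\hat\nu_k/\sigma_k-1$, and the maximal inequality $\Ex\max_i\bar X_{ik}^2\le(nK_2)^{2/(3r')}$. Summing over $k$ with weights $|\theta^{(j)}_k|$ means you never need a maximum over $k$ of unbounded quantities.

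What each buys: your version is more elementary, with no Rosenthal and no Lemma \ref{lemma:moment_bound}. It uses the high coordinate moments only in a crude maximal inequality, where moments of order about $\max\{4,r/2\}$ would already suffice. The paper's version is shorter and gives a single nonasymptotic bound on the whole $\ell_r$ norm of the perturbation.

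One small correction: the statement fixes $\zeta=2n^{1/r}$, so you should not appeal to enlarging $c'$. Run your own Markov bound for the first term at threshold $n^{1/r}$ instead of $(h_0n)^{1/r}$. This gives probability at most $K_\theta^{1/2}h_0^{1/r'}n^{1/r'-1/r}\to0$, so $\max_i|\langle\bar X_i,\theta^{(j)}\rangle|=o_p(n^{1/r})$. Hence the constant $2$, and indeed any positive constant, comes for free.
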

\begin{proof}[Proof of Lemma \ref{lemma:infty_bound}]
We begin by writing:
\begin{equation*}
    \|\tilde{X} \theta^{(j)}\|_{\infty} \leq \|\tilde{X} \theta^{(j)}\|_{r'}\leq \|(\tilde{X} -\bar{X})\theta^{(j)}\|_{r} + \|\bar{X} \theta^{(j)}\|_{r'} \; ,
\end{equation*}
with $r'>r>2$ as in Assumption 4. Then, given $\eta>0$:
\begin{equation}
\label{eq:bound_infty_bar}
    \Pr \left[ \|\bar{X} \theta^{(j)}\|_{r'} > \eta\right] \leq \frac{\Ex \left[\|\bar{X} \theta^{(j)}\|^{r'}_{r'} \right]}{\eta^{r'}} = \frac{n\Ex\left[\langle\bar{X}_1,\theta^{(j)}\rangle^{r'}\right]}{\eta^{r'}} \leq \frac{h_0K_{\theta}^{r'/2}n}{\eta^{r'}} \; ,
\end{equation}
by equations (17) in Assumption 4 and (19) from Assumption 5. Also:
\begin{equation*}
    \|(\tilde{X} -\bar{X})\theta^{(j)}\|_{r}^r = \sum_{i=1}^{n} \left|\sum_{k=1}^{p}(\tilde{X}_{ik}-\bar{X}_{ik})\theta_k^{(j)}\right|^r
    \leq \|\theta^{(j)}\|_1^r \sum_{k=1}^{p} \frac{|\theta_k^{(j)}|}{\|\theta^{(j)}\|_1}\sum_{i=1}^{n} \left|\tilde{X}_{ik}-\bar{X}_{ik}\right|^r \; ,
\end{equation*}
by Jensen's inequality. This gives, for some $C_1>0$:
\begin{equation}
\label{eq:bound_r_norm}
    \Pr \left[ \|(\tilde{X} -\bar{X})\theta^{(j)}\|_{r} > \eta\right] \leq \frac{C_1\|\theta^{(j)}\|_1^r n }{n^{r/2}\eta^r} + \Pr \left[\max_{k \in [p]} \frac{\hat{\sigma}^2}{{\sigma}^2} \leq \frac{1}{4}\right]\; ,
\end{equation}
by Markov's inequality and moment bound in Lemma \ref{lemma:moment_bound} below. The probability on the right hand side can be bounded by Lemma \ref{lemma:bound_max}. Therefore:
\begin{equation*}
    \Pr \left[ \|\tilde{X} \theta^{(j)}\|_{\infty} > 2\eta \right] \leq \frac{C_1\|\theta^{(j)}\|_1^r n }{n^{r/2}\eta^r} + \frac{h_0K_{\theta}^{r'/2}n}{\eta^
    {r'}} + 6e^{-c_1n} \; ,
\end{equation*}
with $c_1$ as in Lemma \ref{lemma:bound_max}, which can be taken uniformly bounded from below by Assumption 2. Finally, with $\eta = n^{1/r}$:
\begin{equation*}
    \Pr \left[ \|\tilde{X} \theta^{(j)}\|_{\infty} > 2n^{1/r} \right] \leq \frac{C_1\|\theta^{(j)}\|_1^r }{n^{r/2}} + h_0K_{\theta}^{r'/2}n^{1-r'/r} + 6e^{-c_1n}\; ,
\end{equation*}
which goes to zero by equation \eqref{eq:sparse_theta} in Assumption 5 and the definition of $r$ and $r'$ as in Assumption 4, recalling the definition of $c_1$ in Lemma \ref{lemma:self_norm1} and Assumption 2. Notably, the proof of this lemma only requires the weaker assumptions given for Method 1.
\end{proof}

With the results above, we finally give the proof of our main theorem. The essential points of the proof are as follows. We begin by using the decomposition given in equation \eqref{eq:basic_eq}:
\begin{align*}
     &\sqrt{n} \left(\hat{\beta}^u_j-\hat{\sigma}_j\beta^0_j\right) = {\frac{1}{\sqrt{n}} (m^{(j)})^T \tilde{X}^T (\epsilon + \rho)} + {\sqrt{n}(e_j^T-(m^{(j)})^T\tilde{\Sigma}_{n})(\hat{\beta}-D_{X}^{1/2}\beta^0)} \; . \\
     & = \underbrace{\frac{1}{\sqrt{n}}(\tilde{X}m^{(j)}-\tilde{X}\theta^{(j)})^T\epsilon}_{Z^{(1)}_j} + \underbrace{\frac{1}{\sqrt{n}}(\theta^{(j)})^T \tilde{X}^T \epsilon}_{Z^{(2)}_j} + \underbrace{\sqrt{n}(e_j^T-(m^{(j)})^T\tilde{\Sigma}_{n})(\hat{\beta}-D_{X}^{1/2}\beta^0)}_{(\Delta_j)} \\
     & + \underbrace{\frac{1}{\sqrt{n}}(\tilde{X}m^{(j)}-\tilde{X}\theta^{(j)})^T\rho}_{R^{(1)}_j} + \underbrace{\frac{1}{\sqrt{n}}(\theta^{(j)})^T \tilde{X}^T \rho}_{R^{(2)}_j}
     \; .
\end{align*}
The first step in the proof is show that $\Delta_j$ goes to zero in probability. With probability approaching one:
\begin{equation}
\label{eq:pred_lasso_1}
    \|\hat{\beta}-D_X^{1/2}\beta^0\|_1 = O\left( \frac{s\sqrt{\log p}}{n^{1/2-1/q}}\right) \; ,
\end{equation}
by Theorem \ref{thm:lasso_consistency} and Corollary \ref{cor:corollary_eigen} above. Also, by Lemma \ref{lemma:bound_viable} and Lemma \ref{lemma:infty_bound}, with high probability:
\begin{equation}
\label{eq:pred_lasso_2}
\|e_j^T-(\theta^{(j)})^T\tilde{\Sigma}_n\|_{\infty} = O\left(\frac{\sqrt{\log p}}{n^{1/2-1/r}}\right) \; ,
\end{equation}
under the conditions of the previous section, for either Method 1 or Method 2. In particular, this shows that the feasible set of \eqref{eq:opt1} is non-empty and we can replace $\theta^{(j)}$ by $m^{(j)}$ in the equations above for Method 1. For Method 2, we can reach the same conclusion by KKT conditions. Then, with probability going to one, by Hölder's inequality:
\begin{equation*}
    |\Delta_j| \leq \sqrt{n}\|e_j^T-(m^{(j)})^T\tilde{\Sigma}_n\|_{\infty} \|\hat{\beta}-D_X^{1/2}\beta^0\|_1 = O\left(\frac{s\log p}{n^{1/2-1/r-1/q}}\right) \; .
\end{equation*}
Under Assumption \ref{cond3} we can conclude that $\Delta_j = o_p(1)$. The second step is to analyze the terms $Z_j^{(1)}$. Using the feasibility of $\theta^{(j)}$ once again, for Method 1:
\begin{eqnarray*}
     \frac{1}{n} \| \tilde{X}(m^{(j)}-\theta^{(j)})\|_2^2
    & = & (m^{(j)})^T \tilde{\Sigma}_n m^{(j)}-(\theta^{(j)})^T\tilde{\Sigma}_nm^{(j)}\\
     & - &(m^{(j)})^T\tilde{\Sigma}_n\theta^{(j)}+(\theta^{(j)})^T\tilde{\Sigma}_n\theta^{(j)}\\
    & \leq & 2 ((\theta^{(j)})^T\tilde{\Sigma}_n (\theta^{(j)}-m^{(j)}))\\ 
    & \leq & 2\|\theta^{(j)}\|_1 \|\tilde{\Sigma}_n(\theta^{(j)}-m^{(j)})\|_{\infty}\\
    & \leq & 4\|\theta^{(j)}\|_1\mu \; ,
\end{eqnarray*}
which gives, by Markov's inequality, Assumption \ref{cond1}, and Assumption \ref{cond5}:
\begin{equation*}
    Z_j^{(1)} =   o_p(1) \; .
\end{equation*}
For Method 2, we will resort to Lemma \ref{lemma:general_lasso}. More explicitly, we will use it to show:
\begin{eqnarray}
\label{eq:bound_method2}
     \frac{1}{\sqrt{n}} \| \tilde{X}(m^{(j)}-\theta^{(j)})\|_2 = O_p( \lambda_M \sqrt{s_\theta}) \; ,
\end{eqnarray}
for this second method. We conclude by using the sparsity requirements in Assumption \ref{cond5}. 
In the third step, using a self-normalization concentration bound as in Lemma \ref{lemma:self_norm1}, the Lindeberg-Lévy Central Limit Theorem will give
\begin{equation*}
    Z_j^{(2)} \xrightarrow{d}\mathcal{N}(0,\mathbb{E}[\langle\bar{X}_{1},\theta^{(j)}\rangle^2\epsilon_{1}^{2}]) \; ,
\end{equation*}
establishing the desired asymptotics in Theorem \ref{thm:main}.

For Method 2, which allows for $\rho \neq 0$, we still need to control ${R^{(1)}_j}$ and ${R^{(2)}_j}$. The fourth step in the proof is to show ${R^{(1)}_j} = o_p(1)$. This will come as a consequence of equation \eqref{eq:bound_method2}. Finally, the fifth and last step in the proof is to show ${R^{(2)}_j} = o_p(1)$. This will be achieved by combining an argument similar to the one given for ${Z^{(2)}_j}$ to control the dependencies introduced by the `standardization', along with equation \eqref{eq:bound_theta} in Assumption \ref{cond4}.

\begin{proof}[Proof of Theorem \ref{thm:main}]
We begin by writing, recalling equation \eqref{eq:basic_eq}:
\begin{align}
\label{eq:decomp_main}
    \nonumber &\sqrt{n} \left(\hat{\beta}^u_j-\hat{\sigma}_j\beta^0_j\right) = {\frac{1}{\sqrt{n}} (m^{(j)})^T \tilde{X}^T (\epsilon+ \rho)} + {\sqrt{n}(e_j^T-(m^{(j)})^T\tilde{\Sigma}_{n})(\hat{\beta}-D_{X}^{1/2}\beta^0)} \; . \\
     \nonumber& = \underbrace{\frac{1}{\sqrt{n}}(\tilde{X}m^{(j)}-\tilde{X}\theta^{(j)})^T\epsilon}_{Z^{(1)}_j} + \underbrace{\frac{1}{\sqrt{n}}(\theta^{(j)})^T \tilde{X}^T \epsilon}_{Z^{(2)}_j} + \underbrace{\sqrt{n}(e_j^T-(m^{(j)})^T\tilde{\Sigma}_{n})(\hat{\beta}-D_{X}^{1/2}\beta^0)}_{(\Delta_j)} \\
     & + \underbrace{\frac{1}{\sqrt{n}}(\tilde{X}m^{(j)}-\tilde{X}\theta^{(j)})^T\rho}_{R^{(1)}_j} + \underbrace{\frac{1}{\sqrt{n}}(\theta^{(j)})^T \tilde{X}^T \rho}_{R^{(2)}_j}\; .
\end{align}
The proof is then divided in five steps. The first one is to show that the $\Delta_j$ term above vanishes under our assumptions. Then, we show that the $Z^{(1)}_j$ also goes to zero, while $Z^{(2)}_j$ has the desired gaussian limit. This establishes the result for Method 1 (equation \eqref{eq:opt1}), which assumes $\rho = 0 $. The final two steps show ${R^{(1)}_j}$ and ${R^{(2)}_j}$ both go to zero under the assumptions of Method 2 (equation \eqref{eq:opt2}). 

\begin{enumerate}
    \item The first step is to bound $\Delta_j$. By Hölder's inequality:
\begin{equation}
\label{eq:holder_base}
    |\Delta_j| \leq \sqrt{n} \|e_j^T-(m^{(j)})^T\tilde{\Sigma}_n\|_{\infty}\|\hat{\beta}-D_{X}^{1/2}\beta^0\|_1 \; .
\end{equation}
Using Theorem \ref{thm:lasso_consistency} and Corollary \ref{cor:corollary_eigen}, we can bound the error of the usual lasso estimator. Indeed, by Theorem \ref{thm:lasso_consistency}, in the event that $\mathrm{re}(\tilde{\Sigma}_n,S,C)$ is uniformly bounded by below, for Method 1:
\begin{equation*}
    \mathbb{P}\left[\|\hat{\beta} - D_{X}^{1/2}\beta^0\|_1 \leq L \lambda s  \right] \geq 1 - \delta_\lambda \; .
\end{equation*}
For Method 2:
\begin{equation}
\label{eq:lasso_prob_bound_method_2}
    \mathbb{P}\left[\|\hat{\beta} - D_{X}^{1/2}\beta^0\|_1 \leq L \max \{\lambda s, s_\theta\sigma^2_\rho /\lambda n \delta_\rho\} \right] \geq 1-\delta_\rho - \delta_\lambda \; ,
\end{equation}
where, in either case we can take, for some large enough absolute constant $c_0>0$:
\begin{equation*}
    \lambda = \frac{c_0\sqrt{\log p}}{\delta_{\lambda}^{1/q'} n^{1/2-1/q'}} \geq \frac{c\sqrt{8\log(2p)} M_{q'}}{\delta_{\lambda}^{1/q'} n^{1/2-1/q'}} \; .
\end{equation*}
Setting $\delta_\lambda = n^{1-q'/q}$:
\begin{equation*}
    \frac{s_\theta}{\lambda^2 s n \delta_\rho} = o\left(\frac{1}{n^{2/q+1/r}\sqrt{\log p} \delta_\rho}\right) \; ,
\end{equation*}
by Assumption \ref{cond3} and Assumption \ref{cond5}. Then, by equation \eqref{eq:lasso_prob_bound_method_2}, for any $\delta_\rho$ satisfying both $\delta_\rho \rightarrow 0$ and $n^{2/q+1/r}\sqrt{\log p} \delta_\rho \rightarrow \infty$, for sufficiently large $n$:
\begin{equation*}
    \mathbb{P}\left[\|\hat{\beta} - D_{X}^{1/2}\beta^0\|_1 \leq L \lambda s  \right] \geq 1-\delta_\rho - \delta_\lambda \; ,
\end{equation*}
which approaches one. Also, under the conditions of Corollary \ref{cor:corollary_eigen}, given $\delta>0$:
\begin{equation*}
    \mathbb{P}\left[ \mathrm{re}(\tilde{\Sigma}_n,S,\alpha) \geq \left(1-\eta \right)^2 \mathrm{re}(\bar{\Sigma},S,\tilde{\alpha}) - \frac{a (\alpha+ 1)^2s\log p}{n} \right] \geq 1 -
   4e\,p^{1-\frac{a}{24}} - 11 e^{-c_1 n}
    - \delta ,
\end{equation*}
Therefore, for sufficiently large $n$, by the sparsity requirement in equation \eqref{eq:sparse_truth} of Assumption \ref{cond3} and setting $\tilde{\alpha} = w$ in condition \eqref{eq:cond_restrict} from Assumption \ref{cond2}, it holds for either Method 1 or Method 2:
\begin{equation*}
    \mathbb{P}\left[\|\hat{\beta} - D_{X}^{1/2}\beta^0\|_1 \leq sL\lambda \right] \geq 1 -
   4e\,p^{1-a/24} - 11 e^{-c_1 n}
    - \delta - \delta_\rho - \delta_\lambda\; ,
\end{equation*}
for $L$ some absolute constant which depends on $k>0$ as in Assumption \ref{cond2}. With $\delta = \delta_\lambda = n^{1-q'/q}$:
\begin{equation}
\label{eq:bound_holder_1}
    \mathbb{P}\left[\|\hat{\beta} - D_{X}^{1/2}\beta^0\|_1 \leq sL\lambda \right] \geq 1 -
   4e\,p^{1-a/24} - 11 e^{-c_1 n}
    - 2n^{1-q'/q} - \delta_\rho\; ,
\end{equation}
which goes to one, taking $c_1 < k_0^2/k_1$ some absolute constant in the definition of Lemma \ref{lemma:self_norm1}, with $k_0$ and $k_1$ as in Assumption \ref{cond2}, and recalling that $q<q'$ by Assumption \ref{cond3}.

On the other hand, we can bound $\|e_j-(m^{(j)})^T\tilde{\Sigma}_n\|_{\infty}$ for Method 1 using Lemma \ref{lemma:bound_viable} and Lemma \ref{lemma:infty_bound}. Note that these results guarantee that, with high probability, $\theta^{(j)}$ is feasible for this method. In this event, since $m^{(j)}$ is the minimizer, in particular, it is feasible and satisfies:
\begin{equation*}
    \|\tilde{\Sigma}_n m^{(j)} - e_j\|_{\infty} \leq \mu \; .
\end{equation*}
For Method 2, KKT conditions imply:
\begin{equation*}
    \tilde{\Sigma}_n m^{(j)} - e_j + \lambda_M \hat{g} = 0 \; ,
\end{equation*}
where $\hat{g}$ is a subgradient of the 1-norm at $m^{(j)}$.
Then, since $\|\hat{g}\|_\infty \leq 1$:
\begin{equation*}
    \|\tilde{\Sigma}_n m^{(j)} - e_j\|_{\infty} \leq \lambda_M \; ,
\end{equation*}
and we can conclude as in the Method 1. Combining these results with \eqref{eq:bound_holder_1} back in equation \eqref{eq:holder_base} and recalling the choices of $\mu$ and $\lambda_M$ in the statement of the theorem, with probability approaching one:
\begin{equation*}
    |\Delta_j| \leq \sqrt{n }s L c_0  \frac{\sqrt{\log p}}{ n^{1/2-1/{q}}}  \frac{\max\{C,c_2\}\sqrt{\log p}}{n^{1/2-1/r}} = O\left(\frac{s\log p}{n^{1/2-1/q-1/r}}\right)\; ,
\end{equation*}
which goes to zero by the assumption on $s$ in equation \eqref{eq:sparse_truth} from condition \ref{cond3}.\\
\item We now show the term $Z_j^{(1)}$ in equation \eqref{eq:decomp_main} vanishes in probability. Recall that, by Lemma \ref{lemma:bound_viable} and Lemma \ref{lemma:infty_bound}, $\theta^{(j)}$ is feasible for the optimization problem in \eqref{eq:opt1} with high probability. Then, with high probability:
\begin{eqnarray}
\label{eq:l2_error}
    \nonumber \frac{1}{n} \| \tilde{X}(m^{(j)}-\theta^{(j)})\|_2^2
    & = & (m^{(j)})^T \tilde{\Sigma}_n m^{(j)}-(\theta^{(j)})^T\tilde{\Sigma}_nm^{(j)}\\
    \nonumber & - &(m^{(j)})^T\tilde{\Sigma}_n\theta^{(j)}+(\theta^{(j)})^T\tilde{\Sigma}_n\theta^{(j)}\\
    \nonumber 
    & \leq & 2 ((\theta^{(j)})^T\tilde{\Sigma}_n (\theta^{(j)}-m^{(j)}))\\
    \nonumber 
    & \leq & 2\|\theta^{(j)}\|_1 \|\tilde{\Sigma}_n(\theta^{(j)}-m^{(j)})\|_{\infty}\\
    & \leq & 4\|\theta^{(j)}\|_1\mu \; ,
\end{eqnarray}
where the first inequality follows from the optimality of $m^{(j)}$ and feasibility of $\theta^{(j)}$, the second is Hölder's inequality and the last one follows from the feasibility of both $m^{(j)}$ and $\theta^{(j)}$. Then, given $\xi>0$, under Assumption \ref{cond1}, in the event that $\theta^{(j)}$ is feasible:
\begin{eqnarray*}
    \Pr\left[\frac{1}{\sqrt{n}}(m^{(j)}-\theta^{(j)})^T \tilde{X}^T \epsilon \geq\xi\middle|X\right] 
    & \leq & \frac{1}{\xi^2}\frac{1}{n}(m^{(j)}-\theta^{(j)})^T \tilde{X}^T\mathbb{V}[\epsilon|X]\tilde{X}(m^{(j)}-\theta^{(j)}) \\ 
    & \leq & \frac{4\|\theta^{(j)}\|_1\mu \sigma_{\epsilon}^2}{\xi^2} \; ,
\end{eqnarray*}
which goes to zero as long as $\|\theta^{(j)}\|_1= o(1/\mu)$. This is implied by equation \eqref{eq:sparse_theta} for Method 1 in Assumption \ref{cond5}, by our choice of $\mu$. Taking the expectation on $X$ and recalling we are conditioning in an event of probability approaching one, we conclude that $Z_j^{(1)} = o_p(1)$ for Method 1.

For Method 2, notice the objective function in equation \eqref{eq:opt2} can be cast as a version of problem \eqref{eq:opt_problem_general_lasso} without the variable $\gamma$. To this end, set $A = \tilde{X}/\sqrt{n}$, $\xi = \lambda_M$, $b = e_j$, $\theta_0 = \theta^{(j)}$, $S = S(\theta^{(j)})$, $\Theta = \{\theta : \|X\theta\|_{\infty} \leq \zeta\}$ and $v=l=h=0$. Recall that, with high probability, $\theta^{(j)} \in \Theta$ by Lemma \ref{lemma:infty_bound}. Then, with high probability:
\begin{equation*}
    \|A^TA\beta - b\|_{\infty} = \|\tilde{\Sigma}_n \theta^{(j)} - e_j\|_{\infty} \leq \mu = O(\lambda_M) \; . 
\end{equation*}
Also, by equation \eqref{eq:cond_restrict_2} in Assumption \ref{cond4} and Corollary \ref{cor:corollary_eigen}, with probability approaching one:
\begin{equation*}
    \mathrm{re}(\tilde{\Sigma}_n,S(\theta^{(j)}),\alpha) \geq C_0 \mathrm{re}(\bar{\Sigma},S(\theta^{(j)}),\tilde{\alpha}) \geq C_0 k' \; ,
\end{equation*}
for some absolute constant $C_0 >0$ and large enough $n$, taking $w' = \tilde{\alpha}$ in equation \eqref{eq:cond_restrict_2} of Assumption \ref{cond4}. Therefore, by Lemma \ref{lemma:general_lasso}:
\begin{equation}
\label{eq:bound_error_2}
    \frac{1}{\sqrt n} \|\tilde{X}(m^{(j)}-\theta^{(j)})\|_2 = O(\lambda_M \sqrt{s_\theta}) \; .
\end{equation}
Then, using the same argument as the one for Method 1 above, by Markov's inequality and Assumptions \ref{cond1}:
\begin{equation*}
    \frac{1}{\sqrt{n}}(m^{(j)}-\theta^{(j)})^T \tilde{X}^T \epsilon = O_p(\lambda_M \sqrt{s_\theta}) \; ,
\end{equation*}
which goes to zero by equation \eqref{eq:sparse_theta_2} of Assumption \ref{cond5}.
\\
\item We now show that $Z_j^{(2)}$ has the desired gaussian limit. Indeed:
\begin{eqnarray}
\label{eq:clt}
    \nonumber Z_j^{(2)} &=&\frac{1}{\sqrt{n}}(\theta^{(j)})^T \tilde{X}^T \epsilon 
    = \frac{1}{\sqrt{n}}\sum_{i=1}^{n} \langle\tilde{X}_{i},\theta^{(j)}\rangle\epsilon_i = \frac{1}{\sqrt{n}}\sum_{i=1}^{n}\epsilon_i \sum_{k=1}^{p}\tilde{X}_{ik}\theta^{(j)}_k \\
    &=&\frac{1}{\sqrt{n}}\sum_{i=1}^{n}\epsilon_i\sum_{k=1}^{p} \frac{X_{ik}-{\mu}_k}{\hat{\sigma}_k}\theta^{(j)}_k +\left(\frac{1}{\sqrt{n}}\sum_{i=1}^{n}\epsilon_i\right)\left(\sum_{k=1}^{p}\frac{{\mu}_k-\hat{\mu}_k}{\hat{\sigma}_k}\theta^{(j)}_k\right) \; .
\end{eqnarray}
Under the conditions in Assumption \ref{cond1}, $\mathbb{E}[\epsilon]=0$ and $\Var [\epsilon] < + \infty$, so that the Central Limit Theorem applies to $\frac{1}{\sqrt{n}}\sum_{i=1}^{n}\epsilon_i$. On the other hand, by Lemma \ref{lemma:self_norm1} and Assumption \ref{cond2}.
\begin{equation}
\label{eq:bound_mean}
    \Pr \left[\left|\sum_{k=1}^{p}\frac{\hat{\mu}_k-{\mu}_k}{\hat{\sigma}_k}\theta^{(j)}_k\right|\geq \sqrt{\frac{\log p}{n}}\|\theta^{(j)}\|_1\right] \leq \Pr \left[\max_{k \in [p]}\left|\frac{\hat{\mu}_k-{\mu}_k}{\hat{\sigma}_k}\right|\geq \sqrt{\frac{\log p}{n}}\right] \xrightarrow[n\rightarrow\infty]{} 0 \; ,
\end{equation}
This implies that the second term on the right of equation \eqref{eq:clt} goes to zero in probability by Slutsky's Theorem, since $\|\theta^{(j)}\|_1\sqrt{\frac{\log p}{n}} \rightarrow 0$ by Assumption \ref{cond5}. For the first term, notice that we can write:
\begin{eqnarray*}
    \frac{1}{\sqrt{n}}\sum_{i=1}^{n}\epsilon_i\sum_{k=1}^{p} \frac{X_{ik}-{\mu}_k}{\hat{\sigma}_k}\theta^{(j)}_k 
    & = & \frac{1}{\sqrt{n}}\sum_{i=1}^{n}\epsilon_i\langle \bar{X}_i,\theta^{(j)}\rangle \\ 
    & + & \frac{1}{\sqrt{n}}\sum_{i=1}^{n}\epsilon_i\sum_{k=1}^{p} \frac{X_{ik}-{\mu}_k}{\hat{\sigma}_k}\left(1-\frac{\hat{\sigma}_k}{\sigma_k}\right)\theta^{(j)}_k \; ,
\end{eqnarray*}
Then, by the Central Limit Theorem, under the condition in Assumption \ref{cond4}:
\begin{equation*}
    \frac{1}{\sqrt{n}}\sum_{i=1}^{n}\epsilon_i\langle \bar{X}_i,\theta^{(j)}\rangle \rightarrow \mathcal{N}(0,\Ex[\epsilon_1^2\langle \bar{X}_1,\theta^{(j)}\rangle^2]) \; .
\end{equation*}
This is the desired limit distribution. It remains to prove:
\begin{equation*}
    \frac{1}{\sqrt{n}}\sum_{i=1}^{n}\epsilon_i\sum_{k=1}^{p} \frac{X_{ik}-{\mu}_k}{\hat{\sigma}_k}\left(1-\frac{\hat{\sigma}_k}{\sigma_k}\right)\theta^{(j)}_k =o_p(1) \; .
\end{equation*}
Indeed, taking $\xi>0$ and setting $\hat{\nu}_k^2= \frac{1}{n}\sum_{i=1}^{n}\left(X_{ik} -\mathbb{E}X_{1k}\right)^2$ for $k \in [p]$:
\begin{align*}
    & \Pr \left[\left|\frac{1}{\sqrt{n}}\sum_{i=1}^{n}\epsilon_i\sum_{k=1}^{p} \frac{X_{ik}-{\mu}_k}{\hat{\sigma}_k}\left(1-\frac{\hat{\sigma}_k} {\sigma_k}\right)\theta^{(j)}_k\right|\geq \xi\middle| X \right] \\ & \leq \frac{\mathbb{E}\left[\left(\frac{1}{\sqrt{n}}\sum_{i=1}^{n}\epsilon_i\sum_{k=1}^{p} \frac{X_{ik}-{\mu}_k}{\hat{\sigma}_k}\left(1-\frac{\hat{\sigma}_k}{\sigma_k}\right)\theta^{(j)}_k\right)^2 \middle| X \right]}{\xi^2} \\ & = \frac{\mathbb{E}\left[\sum_{i=1}^{n}\epsilon_i^2 \frac{1}{n}\left(\sum_{k=1}^{p} \frac{X_{ik}-{\mu}_k}{\hat{\sigma}_k}\left(1-\frac{\hat{\sigma}_k}{\sigma_k}\right)\theta^{(j)}_k\right)^2 \middle| X \right]}{\xi^2} \\ & \leq
    \frac{\sigma_{\epsilon}^2\|\theta^{(j)}\|_1^2 \sum_{i=1}^n \frac{1}{n}\left(\sum_{k=1}^{p} \frac{|\theta^{(j)}_k|}{\|\theta^{(j)}\|_1}\left|\frac{X_{ik}-{\mu}_k}{\hat{\sigma}_k}\left(1-\frac{\hat{\sigma}_k}{\sigma_k}\right)\right|\right)^2}{\xi^2} \\ & \leq
    \frac{\sigma_{\epsilon}^2\|\theta^{(j)}\|_1^2 \sum_{k=1}^p \frac{|\theta^{(j)}_k|}{\|\theta^{(j)}\|_ 1} \left(1-\frac{\hat{\sigma}_k}{\sigma_k}\right)^2\sum_{i=1}^{n} \frac{1} {n} \left(\frac{X_{ik}-{\mu}_k}{\hat{\sigma}_k}\right)^2}{\xi^2} \\ & =
    \frac{\sigma_{\epsilon}^2\|\theta^{(j)}\|_1^2 \sum_{k=1}^p \frac{|\theta^{(j)}_k|}{\|\theta^{(j)}\|_ 1} \left(1-\frac{\hat{\sigma}_k}{\sigma_k}\right)^2\left(\frac{\hat{\nu}_k}{\hat{\sigma}_k}\right)^2}{\xi^2} 
\end{align*}
where we use Markov's inequality, the independence of $\{(X_i,\epsilon_i)\}_{i=1}^n$, the hypothesis on the conditional mean and variance of $\epsilon_i$ given in Assumption \ref{cond1} and Jensen's inequality. Consider $\mathcal{E}$ the `good event':
\begin{equation*}
    \mathcal{E} = \left\{\max_{k \in [p]}\left(\frac{\hat{\nu}_k}{\hat{\sigma}_k}\right)^2 \leq 4 \right\} \; ,
\end{equation*}
then:
\begin{align*}
    & \Pr \left[\left|\frac{1}{\sqrt{n}}\sum_{i=1}^{n}\epsilon_i\sum_{k=1}^{p} \frac{X_{ik}-{\mu}_k}{\hat{\sigma}_k}\left(1-\frac{\hat{\sigma}_k}{\sigma_k}\right)\theta^{(j)}_k \right|\geq \xi\right] \\ & \leq \Pr[\mathcal{E}^c]+ \frac{4\sigma_{\epsilon}^2\|\theta^{(j)}\|_1^2\mathbb{E}\left[ \sum_{k=1}^p \frac{|\theta^{(j)}_k|}{\|\theta^{(j)}\|_ 1} \left(1-\frac{\hat{\sigma}_k}{\sigma_k}\right)^2\right]}{\xi^2} \\ & =
     \Pr[\mathcal{E}^c]+ \frac{4\sigma_{\epsilon}^2\|\theta^{(j)}\|_1^2 \sum_{k=1}^p \frac{|\theta^{(j)}_k|}{\|\theta^{(j)}\|_ 1} \mathbb{E}\left[\left(1-\frac{\hat{\sigma}_k}{\sigma_k}\right)^2\right]}{\xi^2}\; .
\end{align*}
The probability of $\mathcal{E}^c$ goes to zero by Lemma \ref{lemma:bound_max}, since $c_1$ is uniformly bounded by Assumption \ref{cond1}. We finish this step by bounding $\mathbb{E}\left[\left(1-\frac{\hat{\sigma}_k}{\sigma_k}\right)^2\right]$. By Lemma \ref{lemma:variance} below, it holds that:
\begin{equation*}
    \mathbb{E}\left[\left(1-\frac{\hat{\sigma}_k}{\sigma_k}\right)^2\right] \leq  \frac{2\Ex\left[(X_1-\mathbb{E}X_1)^4\right]}{\mathbb{V}[X_1]^2n} \; .
\end{equation*}
Then:
\begin{equation*}
    \Pr \left[\left|\sum_{i=1}^{n}\frac{\epsilon_i}{\sqrt{n}}\sum_{k=1}^{p} \frac{X_{ik}-{\mu}_k}{\hat{\sigma}_k}\left(1-\frac{\hat{\sigma}_k}{\sigma_k}\right)\theta^{(j)}_k \right|\geq \xi\right] \leq \Pr[\mathcal{E}^c]+ \frac{8\Ex\left[(X_1-\mathbb{E}X_1)^4\right]\sigma_{\epsilon}^2\|\theta^{(j)}\|^2_1}{n\xi^2\mathbb{V}[X_1]^2}\; ,
\end{equation*}
and, as long as $\|\theta^{(j)}\|_1 = o(\sqrt{n})$, we can conclude, recalling the bounds given in Assumption \ref{cond2}. This is indeed much weaker than the hypothesis in Assumption \ref{cond5} that $\|\theta^{(j)}\|_1= o(1/\mu)$. This concludes the proof for Method 1, which considers $\rho=0$, by Assumption \ref{cond1}.\\
\item It remains to show that the approximation error terms go to zero in probability under the stronger hypothesis of Method 2. We begin with $R^{(1)}_j$. Using equation \eqref{eq:bound_error_2}, Cauchy-Schwarz inequality implies, with high probability:
\begin{equation*}
    |R_j^{(1)}| \leq \frac{1}{\sqrt n} \|\tilde{X}(m^{(j)}-\theta^{(j)})\|_2 \|\rho\|_2 = O( \lambda_M s_{\theta}) \; ,
\end{equation*}
by a direct application of Markov's inequality with point (iv) of Assumption \ref{cond1} for Method 2. By equation \eqref{eq:sparse_theta_2} in Assumption 5 and the choice of $\lambda_M$ given in statement of the theorem, we conclude that $R_j^{(1)}$ vanishes in probability as $n \rightarrow \infty$.\\

\item Finally, for $R^{(2)}_j$, following an argument similar to the one given for $Z^{(2)}_j$:
\begin{eqnarray}
\label{eq:r2_decomp}
    \nonumber R_j^{(2)} &=&\frac{1}{\sqrt{n}}(\theta^{(j)})^T \tilde{X}^T \rho 
    = \frac{1}{\sqrt{n}}\sum_{i=1}^{n} \langle\tilde{X}_{i},\theta^{(j)}\rangle\rho_i = \frac{1}{\sqrt{n}}\sum_{i=1}^{n}\rho_i \sum_{k=1}^{p}\tilde{X}_{ik}\theta^{(j)}_k \\
    &=&\frac{1}{\sqrt{n}}\sum_{i=1}^{n}\rho_i\sum_{k=1}^{p} \frac{X_{ik}-{\mu}_k}{\hat{\sigma}_k}\theta^{(j)}_k +\left(\frac{1}{\sqrt{n}}\sum_{i=1}^{n}\rho_i\right)\left(\sum_{k=1}^{p}\frac{{\mu}_k-\hat{\mu}_k}{\hat{\sigma}_k}\theta^{(j)}_k\right) \; .
\end{eqnarray}
The second term on the right can be handled exactly as in equation \eqref{eq:bound_mean}, by applying the Law of Large Numbers to the sum of $\rho_i$ and Slutsky's Theorem. Indeed, by the normal equations for $\rho$ (equation \eqref{eq:normal_equations}) and Assumption \ref{cond1}, point (iv), it holds that $\Ex \rho_1 =0$ and $\Ex \rho_1^2 < \infty$. On the other hand: 
\begin{eqnarray}
\label{eq:r2_decomp_2}
    \nonumber \frac{1}{\sqrt{n}}\sum_{i=1}^{n}\rho_i\sum_{k=1}^{p} \frac{X_{ik}-{\mu}_k}{\hat{\sigma}_k}\theta^{(j)}_k 
    & = & \frac{1}{\sqrt{n}}\sum_{i=1}^{n}\rho_i\langle \bar{X}_i,\theta^{(j)}\rangle \\ 
    & + & \frac{1}{\sqrt{n}}\sum_{i=1}^{n}\rho_i\sum_{k=1}^{p} \frac{X_{ik}-{\mu}_k}{\hat{\sigma}_k}\left(1-\frac{\hat{\sigma}_k}{\sigma_k}\right)\theta^{(j)}_k \; .
\end{eqnarray}
The rightmost term above satisfies, by Cauchy-Schwarz inequality:
\begin{equation}
\label{eq:bound_r2_main}
    \frac{1}{\sqrt{n}}\sum_{i=1}^{n}\rho_i\sum_{k=1}^{p} \frac{X_{ik}-{\mu}_k}{\hat{\sigma}_k}\left(1-\frac{\hat{\sigma}_k}{\sigma_k}\right)\theta^{(j)}_k  \leq \frac{1}{\sqrt{n}} \|\rho\|_2 \left(\sum_{i=1}^{n} \langle w_i, \theta^{(j)}\rangle^2\right)^{1/2} \; ,
\end{equation}
where $w_{ik} = \frac{X_{ik}-{\mu}_k}{\hat{\sigma}_k}\left(1-\frac{\hat{\sigma}_k}{\sigma_k}\right)$. Again we follow an argument similar to the one given for $Z_j^{(2)}$, using Markov's inequality and Jensen's inequality. We begin by considering the `good event', recalling that $\hat{\nu}_k^2= \frac{1}{n}\sum_{i=1}^{n}\left(X_{ik} -\mathbb{E}X_{1k}\right)^2$ for $k \in [p]$:
\begin{equation*}
    \mathcal{E} = \left\{\max_{k \in [p]}\left(\frac{\hat{\nu}_k}{\hat{\sigma}_k}\right)^2 \leq 4 \right\} \; .
\end{equation*}
Then, given $\xi>0$, on $\mathcal{E}$:
\begin{align*}
    \Pr \left[\sum_{i=1}^{n} \langle w_i, \theta^{(j)}\rangle^2 >\xi^2 \right] &
    \leq
    \frac{\Ex \left[\|\theta^{(j)}\|_1^2 \sum_{i=1}^n \left(\sum_{k=1}^{p} \frac{|\theta^{(j)}_k|}{\|\theta^{(j)}\|_1}\left|\frac{X_{ik}-{\mu}_k}{\hat{\sigma}_k}\left(1-\frac{\hat{\sigma}_k}{\sigma_k}\right)\right|\right)^2\right]}{\xi^2} \\ & \leq
    \frac{\Ex \left[n\|\theta^{(j)}\|_1^2 \sum_{k=1}^p \frac{|\theta^{(j)}_k|}{\|\theta^{(j)}\|_ 1} \left(1-\frac{\hat{\sigma}_k}{\sigma_k}\right)^2\sum_{i=1}^{n} \frac{1} {n} \left(\frac{X_{ik}-{\mu}_k}{\hat{\sigma}_k}\right)^2\right]}{\xi^2} \\ & =
    \frac{\Ex \left[n\|\theta^{(j)}\|_1^2 \sum_{k=1}^p \frac{|\theta^{(j)}_k|}{\|\theta^{(j)}\|_ 1} \left(1-\frac{\hat{\sigma}_k}{\sigma_k}\right)^2\left(\frac{\hat{\nu}_k}{\hat{\sigma}_k}\right)^2\right]}{\xi^2} \; ,
\end{align*}
which implies, removing the conditioning on $\mathcal{E}$:
\begin{equation*}
    \Pr \left[\sum_{i=1}^{n} \langle w_i, \theta^{(j)}\rangle^2 >\xi^2 \right] \leq \Pr[\mathcal{E}^c] + \frac{4n\|\theta^{(j)}\|_1^2 \sum_{k=1}^p \frac{|\theta^{(j)}_k|}{\|\theta^{(j)}\|_ 1} \Ex \left[\left(1-\frac{\hat{\sigma}_k}{\sigma_k}\right)^2\right]}{\xi^2} \; .
\end{equation*}
By Lemma \ref{lemma:bound_max} above and Lemma \ref{lemma:variance} below, we conclude:
\begin{equation*}
    \sum_{i=1}^{n} \langle w_i, \theta^{(j)}\rangle^2 = O_p(\|\theta^{(j)}\|_1^2) \; .
\end{equation*}
By a direct use of Markov's inequality and Assumption \ref{cond1}:
\begin{equation*}
    \|\rho\|_2 = O_p(\sqrt{s_\theta}) \; ,
\end{equation*}
and, by equation \eqref{eq:bound_r2_main}:
\begin{equation*}
    \frac{1}{\sqrt{n}}\sum_{i=1}^{n}\rho_i\sum_{k=1}^{p} \frac{X_{ik}-{\mu}_k}{\hat{\sigma}_k}\left(1-\frac{\hat{\sigma}_k}{\sigma_k}\right)\theta^{(j)}_k  = O_p\left(\|\theta^{(j)}\|_1\sqrt\frac{s_\theta}{n} \right) = O_p\left(\frac{s_\theta}{\sqrt n} \right) \; .
\end{equation*}
By the Assumption \ref{cond5} for Method 2, we conclude:
\begin{equation*}
    \frac{1}{\sqrt{n}}\sum_{i=1}^{n}\rho_i\sum_{k=1}^{p} \frac{X_{ik}-{\mu}_k}{\hat{\sigma}_k}\left(1-\frac{\hat{\sigma}_k}{\sigma_k}\right)\theta^{(j)}_k = o_p(1) \; .
\end{equation*}
It remains to show that the first term on the right of equation \eqref{eq:r2_decomp_2} is also $o_p(1)$. The steps are similar to the ones in the proof of Lemma \ref{lemma:infty_bound}. Start by considering the following event:
\begin{equation*}
    \mathcal{C} = \left\{ \max_{i \in [n]}\left|\langle\bar{X}_i,\theta^{(j)} \rangle \right| \leq n^{1/r}\right\} \; ,
\end{equation*}
with $r'$ as in Assumption \ref{cond4}. Then, by a union bound with Markov's inequality and the moment hypothesis in equation \eqref{eq:bound_theta} in Assumption \ref{cond4}:
\begin{equation*}
    \Pr [ \mathcal{C}^c] \leq  \frac{\Ex  \left[\langle\bar{X}_1,\theta^{(j)} \rangle^{r'}\right]}{n^{r'/r-1}} \leq \frac{h_{0} (\theta_{j}^{(j)})^{r'/2}}{n^{r'/r-1}} \; .
\end{equation*}
Given $\xi>0$, by Markov's inequality and the normal equations \eqref{eq:normal_equations}, together with the assumption that $\left\{\rho_i\right\}_{i=1}^{n}$ are i.i.d.:
\begin{eqnarray*}
    \Pr \left[ \frac{1}{\sqrt{n}}\left|\sum_{i=1}^{n}\rho_i\langle \bar{X}_i,\theta^{(j)}\rangle \right|\geq \xi \right] 
    &\leq& \frac{\Ex\left[\rho_1^2\right]n^{2/r}}{\xi^2} +  \Pr [ C^c] \\
    &\leq & \frac{ \sigma_\rho^2s_\theta}{n^{1-2/r} \xi^2}+\frac{h_{0} (\theta_{j}^{(j)})^{r'/2}}{n^{r'/r-1}} \; ,
\end{eqnarray*}
where the last inequality follows by point (iv) of Assumption \ref{cond1} and the bound on $\Pr[\mathcal{C}^c]$ above. By the hypothesis on equation \eqref{eq:sparse_theta_2} of Assumption \ref{cond5}, and recalling that $r'>r>2$, the right-hand side above vanishes as $n\rightarrow\infty$. This concludes the proof of the theorem.
\end{enumerate}
\end{proof}

\subsection{Consistency of plug-in estimator}\label{appendix:plug-in}

Below we give a proof of consistency for the plug-in estimator as given in Corollary \ref{cor:interval_2}.

\begin{lemma}
\label{lemma:plug_in}
Define:
\begin{equation*}
    \hat{V}_n = \frac{1}{n}\sum_{i=1}^{n} \langle\tilde{X}_i,m^{(j)}\rangle^2 (Y_i - \tilde{X}_i^T\hat{\beta}-\hat{\gamma})^2
\end{equation*}
with the same notation as in the main theorem and under the same hypotheses:
\begin{equation*}
    \hat{V}_n \xrightarrow[]{\Pr} \Ex  \left[\langle \bar{X}_1, \theta^{(j)}  \rangle^2 \epsilon_1^2 \right]
\end{equation*}
\begin{proof}[Proof of Lemma \ref{lemma:plug_in}]
Set $\hat{\epsilon}_i = Y_i - \tilde{X}_i^T\hat{\beta}- \hat{\gamma}$. To simplify notation, we define the following vectors by their coordinates for $i \in [n]$: 
\begin{eqnarray*}
    \hat{v}_i = \frac{1}{\sqrt{n}}\langle\tilde{X}_i,m^{(j)}\rangle\hat{\epsilon}_i \; , \; \;  \bar{v}_i = \frac{1}{\sqrt{n}}\langle\bar{X}_i,\theta^{(j)}\rangle \epsilon_i \; , \\
    \hat{v}^{(1)} _i =  \frac{1}{\sqrt{n}}\langle\tilde{X}_i,\theta^{(j)}\rangle {\epsilon}_i\; , \; \;  \hat{v}^{(2)}_i = \frac{1}{\sqrt{n}}\langle\tilde{X}_i,m^{(j)}\rangle \epsilon_i \; .
\end{eqnarray*}
We will show that $|\|\hat{v}\|_2-\|\bar{v}\|_2|$ converges to zero in probability. Then, since $\|\bar{v}\|_2^2 \xrightarrow[]{\Pr} \Ex  \left[\langle \bar{X}_1, \theta^{(j)}  \rangle^2 \epsilon_1^2 \right]$ by the Law of Large Numbers, the result will follow from the triangle inequality and the Continuous Mapping Theorem. Indeed, we have:
\begin{equation}
\label{eq:bound_norms}
|\|\bar{v}\|_2-\|\hat{v}\|_2| \leq 
\|\bar{v}-\hat{v}^{(1)}\|_2
+\|\hat{v}^{(1)}-\hat{v}^{(2)}\|_2
+\|\hat{v}^{(2)}-\hat{v}\|_2 \; .
\end{equation}
We deal with each term on the right separately. Recalling, $D_X = \mathrm{diag}(\hat{\sigma}_1, \dots, \hat{\sigma}_p)$, and defining $\bar{D}_X = \mathrm{diag}({\sigma}_1, \dots, {\sigma}_p)$, $\mu=(\mu_1,\dots,\mu_p)^T$, $\hat{\mu}=(\hat{\mu}_1,\dots,\hat{\mu}_p)^T$:
\begin{eqnarray*}
    \|\bar{v}-\hat{v}^{(1)}\|^2_2 
    & = & \frac{1}{n}\sum_{i=1}^n \langle \bar{X}_i - \tilde{X}_i, \theta^{(j)}\rangle^2 \epsilon_i^2\\ 
    & = & \frac{1}{n}\sum_{i=1}^n \langle (\bar{D}_X^{-1/2} - {D}_X^{-1/2})(X_i-\mu), \theta^{(j)}\rangle^2 \epsilon_i^2 + \frac{1}{n}\sum_{i=1}^n \langle {D}_X^{-1/2}(\hat{\mu}-{\mu}), \theta^{(j)}\rangle^2 \epsilon_i^2 \\ 
    & \leq & \frac{1}{n}\sum_{i=1}^n \langle (\bar{D}_X^{-1/2} - {D}_X^{-1/2})(X_i-\mu), \theta^{(j)}\rangle^2 \epsilon_i^2 \\ 
    & + & \left(\max_{k \in [p]} \frac{|\hat{\mu}_k-\mu_k|}{\hat{\sigma}_k}\right)^2\|\theta^{(j)}\|_1^2\frac{1}{n}\sum_{i=1}^n \epsilon_i^2
\end{eqnarray*}
Bounding the terms above is similar to the computations in the proof of our main theorem. The second term of the sum in the last inequality goes to zero in probability by Lemma \ref{lemma:self_norm1} and the assumptions on $\theta^{(j)}$, also observing that the (correctly scaled) sum of $\epsilon_i^2$ converges in probability to a constant since we assume finite variance in Assumption 1. As for the first term, taking $\xi>0$ and defining $\hat{\nu}_k^2= \frac{1}{n}\sum_{i=1}^{n}\left(X_{ik} -\mathbb{E}X_{1k}\right)^2$ for $k \in [p]$:
\begin{align*}
    & \Pr \left[\left|\frac{1}{n}\sum_{i=1}^n \langle (\bar{D}_X^{-1/2} - {D}_X^{-1/2})(X_i-\mu), \theta^{(j)}\rangle^2 \epsilon_i^2\right|\geq \xi\middle| X \right] \\ 
    & \leq \frac{\mathbb{E}\left[\sum_{i=1}^{n}\epsilon_i^2 \frac{1}{n}\left(\sum_{k=1}^{p} \frac{X_{ik}-{\mu}_k}{\hat{\sigma}_k}\left(\frac{\hat{\sigma}_k}{\sigma_k}-1\right)\theta^{(j)}_k\right)^2 \middle| X \right]}{\xi} \\ 
    & \leq \frac{\sigma_{\epsilon}^2\|\theta^{(j)}\|_1^2 \sum_{i=1}^n \frac{1}{n}\left(\sum_{k=1}^{p} \frac{|\theta^{(j)}_k|}{\|\theta^{(j)}\|_1}\left|\frac{X_{ik}-{\mu}_k}{\hat{\sigma}_k}\left(\frac{\hat{\sigma}_k}{\sigma_k}-1\right)\right|\right)^2}{\xi} \\ 
    & \leq \frac{\sigma_{\epsilon}^2\|\theta^{(j)}\|_1^2 \sum_{k=1}^p \frac{|\theta^{(j)}_k|}{\|\theta^{(j)}\|_ 1} \left(1-\frac{\hat{\sigma}_k}{\sigma_k}\right)^2\sum_{i=1}^{n} \frac{1} {n} \left(\frac{X_{ik}-{\mu}_k}{\hat{\sigma}_k}\right)^2}{\xi} \\ 
    & = \frac{\sigma_{\epsilon}^2\|\theta^{(j)}\|_1^2 \sum_{k=1}^p \frac{|\theta^{(j)}_k|}{\|\theta^{(j)}\|_ 1} \left(1-\frac{\hat{\sigma}_k}{\sigma_k}\right)^2\left(\frac{\hat{\nu}_k}{\hat{\sigma}_k}\right)^2}{\xi} \; ,
\end{align*}
where we use Markov's inequality, the independence of $\epsilon_i$, the assumption on their conditional mean and variance given in Assumption 1 and Jensen's inequality. As in Theorem \ref{thm:main}, we can conclude that $\|\bar{v}-\hat{v}^{(1)}\|_2 = o_p(1)$ by taking the expectation on $X$ and applying Lemma \ref{lemma:bound_max} and Lemma \ref{lemma:variance}.\\
Now consider the next term in equation \eqref{eq:bound_norms}:
\begin{equation*}
    \|\hat{v}^{(1)}-\hat{v}^{(2)}\|_2^2= \frac{1}{n}\sum_{i=1}^n \langle \tilde{X}_i, \theta^{(j)}-m^{(j)}\rangle^2 \epsilon_i^2 \; .
\end{equation*}
This term is straightforward to bound using our assumptions. Indeed, by Markov's inequality, given $\eta>0$:
\begin{equation*}
    \Pr \left[ \frac{1}{n}\sum_{i=1}^n \langle \tilde{X}_i, \theta^{(j)}-m^{(j)}\rangle^2 \epsilon_i^2 \geq \eta \middle| X\right] \leq \frac{\sigma_{\epsilon}^2 \Ex \|\tilde{X}^T(\theta^{(j)}-m^{(j)})\|_2^2}{n\eta} \; ,
\end{equation*}
where we use the facts that the $\epsilon_i$ are independent with mean zero and have bounded conditional variance, as per Assumption \ref{cond1}.
As established in the proof of our main theorem, for Method 1, from equation \eqref{eq:l2_error}, with high probability:
\begin{equation*}
    \frac{1}{n} \|\tilde{X}^T(\theta^{(j)}-m^{(j)})\|_2^2 \leq 4 \|\theta^{(j)}\|_1 \mu \; .
\end{equation*}
Splitting in the event that the above inequality holds, using equation \eqref{eq:sparse_theta} in Assumption 5, we can conclude that $\|\hat{v}^{(1)}-\hat{v}^{(2)}\|_2 = o_p(1)$. For Method 2, we can use the same argument in Theorem \ref{thm:main} to conclude via equation \eqref{eq:bound_error_2}.
As before, by Markov's inequality, the conditions on $\epsilon$ in Assumption 1 and invoking equation \eqref{eq:sparse_theta_2} of Assumption 5 it holds that $\|\hat{v}^{(1)}-\hat{v}^{(2)}\|_2 = o_p(1)$ . It remains to bound $\|\hat{v}^{(2)}-\hat{v}\|_2$. We will follow a similar strategy and start by defining the following vector by its coordinates for $i \in [n]$: 
\begin{equation*}
    \hat{w}_i^{(1)} = \frac{1}{\sqrt{n}}\langle\tilde{X}_i,\theta^{(j)}\rangle(\hat{\epsilon}_i-{\epsilon}_i) \; , 
\end{equation*}
then:
\begin{equation}
\label{eq:bound_norms_2}
\|\hat{v}^{(2)}-\hat{v}\|_2 \leq 
\|\hat{v}^{(2)}-\hat{v}-\hat{w}^{(1)}\|_2 + \|\hat{w}^{(1)}\|_2\; .
\end{equation}
First notice that:
\begin{equation*}
    \|\hat{w}^{(1)}\|_2^2 = \frac{1}{n}\sum_{i=1}^n \langle \tilde{X}_i, \theta^{(j)}\rangle^2 (\hat{\epsilon}_i-\epsilon_i)^2 \leq \frac{\|\epsilon-\hat{\epsilon}\|_2^2}{n} \|\tilde{X}\theta^{(j)}\|_{\infty}^2 \, .
\end{equation*}
Then, by Lemma \ref{lemma:infty_bound} above and Lemma \ref{lemma:epsilon_error} below:
\begin{equation*}
    \|\hat{w}^{(1)}\|_2^2 = O_p\left(\frac{s\log p}{n^{1-2/q-2/r}}+\frac{s_\theta}{n^{1-2/r}} + \frac{1}{n^{1-2/r}}\right) \, ,
\end{equation*}
which goes to zero by Assumption 3 and Assumption 5, recalling that $r>2$.
Similarly:
\begin{eqnarray*}
    \|\hat{v}^{(2)}-\hat{v}-\hat{w}^{(1)}\|_2^2 
    & = & \frac{1}{n}\sum_{i=1}^n \langle \tilde{X}_i, \theta^{(j)}-m^{(j)}\rangle^2 (\hat{\epsilon}_i-\epsilon_i)^2  \leq \frac{\|\epsilon-\hat{\epsilon}\|_2^2}{n} \|\tilde{X}(\theta^{(j)}-m^{(j)})\|_{\infty}^2\\ 
\end{eqnarray*}
With probability approaching one, by Lemma \ref{lemma:infty_bound} and Lemma \ref{lemma:bound_viable}, which imply the feasibility of $\theta^{(j)}$ for Method 1, we have:
\begin{equation*}
    \|\tilde{X}(\theta^{(j)}-m^{(j)})\|_{\infty} \leq \|\tilde{X}m^{(j)}\|_{\infty} + \|\tilde{X}\theta^{(j)}\|_{\infty} = O\left(n^{1/r}\right) \; .
\end{equation*}
Therefore, as in the previous term, from Assumption 3 and Assumption 5:
\begin{equation*}
    \|\hat{v}^{(2)}-\hat{v}-\hat{w}^{(1)}\|_2^2 = o_p(1) \; ,
\end{equation*}
for Method 1. For Method 2, from equation \eqref{eq:bound_error_2}, bounding the $l_\infty$ norm by the $l_2$ norm and recalling our choice of $\lambda_M$:
\begin{equation*}
    \|\tilde{X}(\theta^{(j)}-m^{(j)})\|_{\infty} = O\left(n^{1/r}\sqrt{ s_\theta \log p} \right)
\end{equation*}
which implies by Lemma \ref{lemma:epsilon_error}:
\begin{equation*}
    \|\hat{v}^{(2)}-\hat{v}-\hat{w}^{(1)}\|_2^2 = O_p\left(\frac{s s_\theta (\log p)^2}{n^{1-2/q-2/r}}+\frac{s_\theta^2 \log p}{n^{1-2/r}} + \frac{s_\theta \log p}{n^{1-2/r}}\right) \; .
\end{equation*}
This also goes to zero by Assumption 3 and the stronger condition on Assumption 5 given in equation \eqref{eq:sparse_theta_2}.
This shows that both terms in the right side of equation \eqref{eq:bound_norms_2} are $o_p(1)$. In turn, by the previous computations and the decomposition in equation \eqref{eq:bound_norms}, this implies $|\|\bar{v}\|_2-\|\hat{v}\|_2| =o_p(1)$. A direct application of the Law of Large Numbers and the Continuous Mapping Theorem allow us to conclude.
\end{proof}
\end{lemma}

\subsection{Additional proofs}\label{appendix:add_lemma}

The first two lemmata below are concerned with moments of `standardized' random variables. The first one is related to the variance, while the second gives a more general bound on the $r$-th moment. Although they are directly related, we give separate proofs to highlight that the second demands a slightly stronger moment assumption and a conditioning argument.

\begin{lemma}
\label{lemma:variance}
    Take $\left\{V_i\right\}_{i=1}^{n}$ a sequence of i.i.d. random variables. Consider $\sigma^2 = \mathbb{V} [V_1] >0$, and define the biased estimator of $\sigma^2$:
    \begin{equation*}
        S^2 = \frac{1}{n} \sum_{i=1}^{n}\left(V_i - \frac{1}{n}\sum_{j=1}^{n} V_j\right)^2
    \end{equation*}
    Assume $n\geq 3$ and $\Ex\left[(V_1-\mathbb{E}V_1)^4\right] < + \infty$. Then, it holds that:
    \begin{equation*}
        \mathbb{E}\left[\left(1 - \frac{S}{\sigma} \right)^2\right] \leq\frac{2\Ex\left[(V_1-\mathbb{E}V_1)^4\right]}{\sigma^4n}
    \end{equation*}
\end{lemma}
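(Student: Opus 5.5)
The plan is to compare $S/\sigma$ with $1$ by passing through squares, using the elementary inequality $(1-x)^2 \le (1-x^2)^2$ for $x \ge 0$. This holds because $1-x^2=(1-x)(1+x)$ and $1+x\ge 1$. Applied with $x=S/\sigma\ge 0$, it gives
\begin{equation*}
\mathbb{E}\left[\left(1-\frac{S}{\sigma}\right)^2\right] \le \frac{1}{\sigma^4}\,\mathbb{E}\left[\left(S^2-\sigma^2\right)^2\right],
\end{equation*}
so it suffices to show $\mathbb{E}[(S^2-\sigma^2)^2]\le 2\mu_4/n$, where $\mu_4=\mathbb{E}[(V_1-\mathbb{E}V_1)^4]$. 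Without loss of generality we center the data, setting $W_i=V_i-\mathbb{E}V_1$; this leaves $S^2$ unchanged.

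We then decompose $S^2=\hat\nu^2-\bar W^2$, where $\hat\nu^2=\frac1n\sum W_i^2$ and $\bar W=\frac1n\sum W_i$. This is the same identity used in the proof of Lemma \ref{lemma:self_norm1}. It gives
\begin{equation*}
S^2-\sigma^2=(\hat\nu^2-\sigma^2)-\bar W^2 .
\end{equation*}
The crude route uses $(a-b)^2\le 2a^2+2b^2$, but that would produce extra terms, so instead we compute the second moment exactly. Three quantities are needed:
\begin{equation*}
\mathbb{E}[(\hat\nu^2-\sigma^2)^2]=\frac{\mu_4-\sigma^4}{n},\qquad \mathbb{E}[\bar W^4]=\frac{\mu_4+3(n-1)\sigma^4}{n^3},
\end{equation*}
and, since only diagonal terms survive in the cross product,
\begin{equation*}
\mathbb{E}[(\hat\nu^2-\sigma^2)\bar W^2]=\frac{1}{n^3}\sum_{i}\mathbb{E}[(W_i^2-\sigma^2)W_i^2]=\frac{\mu_4-\sigma^4}{n^2}.
\end{equation*}

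Assembling these,
\begin{equation*}
\mathbb{E}[(S^2-\sigma^2)^2]=\frac{\mu_4-\sigma^4}{n}-\frac{2(\mu_4-\sigma^4)}{n^2}+\frac{\mu_4+3(n-1)\sigma^4}{n^3}.
\end{equation*}
The middle term is nonpositive, because $\mu_4\ge\sigma^4$ by Jensen, so we drop it. For the last term, $\sigma^4\le\mu_4$ gives $\mu_4+3(n-1)\sigma^4\le (3n-2)\mu_4$, so the last term is at most $3\mu_4/n^2\le \mu_4/n$ once $n\ge 3$. This is exactly where the hypothesis $n\ge3$ enters. Together with the first term, which is at most $\mu_4/n$, the total is bounded by $2\mu_4/n$, which proves the claim.

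The main obstacle is only the bookkeeping of the fourth-moment expansion of $\bar W^4$ and of the cross term. These must be checked carefully so that the constants fit inside $2\mu_4/n$. If a constant ends up slightly too large, the fallback is to bound the last term more carefully using $n\ge3$ explicitly.
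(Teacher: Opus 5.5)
Your proof is correct, and its first step, $(1-S/\sigma)^2\le(\sigma^2-S^2)^2/\sigma^4$, is the same reduction the paper makes. The two arguments then handle $\mathbb{E}[(S^2-\sigma^2)^2]$ differently.

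The paper uses the bias--variance split $\mathbb{V}[S^2]+(\mathbb{E}S^2-\sigma^2)^2$ and quotes the classical formula $\mathbb{V}[\tfrac{n}{n-1}S^2]=\mu_4/n-\sigma^4(n-3)/(n(n-1))$. It invokes $n\ge3$ to drop the negative correction, then absorbs the squared bias $\sigma^4/n^2$ via $\sigma^4\le\mu_4$.

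You instead center the data, write $S^2-\sigma^2=(\hat\nu^2-\sigma^2)-\bar W^2$, and compute the three resulting moments from scratch. You use $n\ge3$ only to absorb $3\mu_4/n^2$ into $\mu_4/n$.

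Your moment computations are right, and both routes give the same exact value $[(n-1)^2\mu_4+(5n-3-n^2)\sigma^4]/n^3$. Your version is self-contained, since it effectively rederives the classical variance formula. The paper's is shorter because it cites that computation instead. As a side remark, this exact expression shows the bound $2\mu_4/n$ holds for every $n\ge1$, so the hypothesis $n\ge3$ is only a bookkeeping convenience in either argument.
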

\begin{proof}[Proof of Lemma \ref{lemma:variance}] This is a classical result. We reproduce a version of the involved computations below for the sake of completeness. Notice that:
\begin{equation*}
    \left(1 - \frac{S}{\sigma} \right)^2 = \frac{(\sigma^2-S^2)^2}{\sigma^2(\sigma+S)^2} \leq \frac{(\sigma^2-S^2)^2}{\sigma^4} \;, 
\end{equation*}
since $S\geq0$. Then:
\begin{equation*}
    \mathbb{E}\left[\left(1 - \frac{S}{\sigma} \right)^2\right] \leq \mathbb{E}\left[ \frac{(\sigma^2-S^2)^2}{\sigma^4}\right] = \frac{1}{\sigma^4}\left(\mathbb{V}\left[S^2\right]+\left(\mathbb{E}S^2-\sigma^2\right)^2\right) \; .
\end{equation*}
Using the classic results for the usual unbiased estimator of $\sigma^2$:
\begin{eqnarray*}
    \mathbb{E}\left[ \frac{n}{n-1} S^2\right] = \sigma^2 \; , \; \mathbb{V} \left[ \frac{n}{n-1} S^2\right] 
    & = & \frac{\mathbb{E}\left[(V_1-\mathbb{E}V_1)^4\right]}{n}-\frac{\sigma^4(n-3)}{n(n-1)} \\ 
    & \leq &\frac{\mathbb{E}\left[(V_1-\mathbb{E}V_1)^4\right]}{n} \; ,
\end{eqnarray*}
implying:
\begin{eqnarray*}
    \mathbb{E}\left[\left(1 - \frac{S}{\sigma} \right)^2\right] 
    & \leq & \frac{1}{\sigma^4} \left( \left(\frac{n-1}{n}\right)^2 \frac{\mathbb{E}\left[(V_1-\mathbb{E}V_1)^4\right]}{n} + \frac{\sigma^4}{n^2}\right) \\ 
    & \leq &  \frac{\mathbb{E}\left[(V_1-\mathbb{E}V_1)^4\right]}{\sigma^4 n} + \frac{1}{n^2} \\
    & \leq & \frac{2\Ex\left[(V_1-\mathbb{E}V_1)^4\right]}{\sigma^4n} \; ,
\end{eqnarray*}
since $\frac{\Ex\left[(V_1-\mathbb{E}V_1)^4\right]}{\sigma^4}\geq 1$ by Jensen's inequality.
\end{proof}

\begin{lemma}
\label{lemma:moment_bound}
Consider a sequence of i.i.d random variables $\left\{ V_i\right\}_{i=1}^{n}$ such that $0 < \Ex \left[|V_1 - \Ex V_1|^{3r}\right] \leq K$, for some $r\geq 2$ and $K>0$ an absolute constant, and write:
\begin{eqnarray*}
    \hat{\mu} =  \frac{1}{n} \sum_{i=1}^{n} V_i \; , \; \; 
    \hat{\sigma}^2 =  \frac{1}{n} \sum_{i=1}^{n} \left(V_i - \hat{\mu} \right)^2 >0 \; ,\\
    {\mu} =  \Ex V_1 \; , \; \; 
    {\sigma}^2 =  \Ex \left[(V_1 - \mu)^2\right] >0 \; . \\
\end{eqnarray*}
Define, for each $i \in [n]$:
\begin{equation*}
    \tilde{V}_i = \frac{V_i-\hat{\mu}}{\hat{\sigma}} \; , \; \; \bar{V}_i = \frac{V_i-{\mu}}{{\sigma}} \; ,
\end{equation*}
and the event:
\begin{equation*}
\Gamma_n = \left\{ \frac{\hat{\sigma}^2}{{\sigma}^2} \geq \frac{1}{8}\right\}
\end{equation*}
Then:
\begin{equation*}
    \Ex \left[|\tilde{V}_i - \bar{V}_i|^r \mathds{1}_{\Gamma_n}\right]\leq \frac{C_r}{n^{r/2}} \; ,
\end{equation*}
for some constant that depends only on $r$ and $K$.
\end{lemma}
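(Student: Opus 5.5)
We need $\Ex[|\tilde V_i-\bar V_i|^r\mathds{1}_{\Gamma_n}]\le C_r n^{-r/2}$. The plan is to write the difference explicitly and bound it on $\Gamma_n$, where $\hat\sigma$ is bounded below by a multiple of $\sigma$.

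\textbf{Decomposition.} Algebraically,
\begin{equation*}
\tilde V_i-\bar V_i=\frac{V_i-\mu}{\hat\sigma}-\frac{V_i-\mu}{\sigma}+\frac{\mu-\hat\mu}{\hat\sigma}
=\bar V_i\left(\frac{\sigma}{\hat\sigma}-1\right)+\frac{\sigma}{\hat\sigma}\,\frac{\mu-\hat\mu}{\sigma}.
\end{equation*}
On $\Gamma_n$ we have $\sigma/\hat\sigma\le\sqrt 8$, and also
\begin{equation*}
\left|\frac{\sigma}{\hat\sigma}-1\right|=\frac{|\sigma^2-\hat\sigma^2|}{\hat\sigma(\sigma+\hat\sigma)}\le \sqrt8\,\frac{|\sigma^2-\hat\sigma^2|}{\sigma^2}.
\end{equation*}
By $|a+b|^r\le 2^{r-1}(|a|^r+|b|^r)$, it suffices to bound two quantities:
\begin{equation*}
\mathrm{(I)}=\Ex\left[|\bar V_i|^r\,\left|\frac{\hat\sigma^2-\sigma^2}{\sigma^2}\right|^r\right],\qquad
\mathrm{(II)}=\Ex\left[\left|\frac{\hat\mu-\mu}{\sigma}\right|^r\right].
\end{equation*}

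\textbf{Term (II).} This is a standard moment bound for a mean of i.i.d. centered variables. Rosenthal's (or Marcinkiewicz–Zygmund's) inequality gives $\Ex|\hat\mu-\mu|^r\le C_r n^{-r/2}\Ex|V_1-\mu|^r$. The last factor is controlled by $K$ via Lyapunov's inequality; normalizing by $\sigma$ is fine since $\sigma$ is a fixed positive constant of the distribution.

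\textbf{Term (I), the main obstacle.} Here $\bar V_i$ is correlated with $\hat\sigma^2$. Write $\hat\sigma^2-\sigma^2=(\hat\nu^2-\sigma^2)-(\hat\mu-\mu)^2$ with $\hat\nu^2=\frac1n\sum_k (V_k-\mu)^2$. Apply Hölder with exponents $3$ and $3/2$:
\begin{equation*}
\mathrm{(I)}\le \left(\Ex|\bar V_i|^{3r}\right)^{1/3}\left(\Ex\left|\frac{\hat\sigma^2-\sigma^2}{\sigma^2}\right|^{3r/2}\right)^{2/3}.
\end{equation*}
The first factor is at most $(K/\sigma^{3r})^{1/3}$. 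This Hölder split is why $3r$ moments are assumed.

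For the second factor, the part $\hat\nu^2-\sigma^2$ is a mean of i.i.d. centered variables $(V_k-\mu)^2-\sigma^2$, which have finite $3r/2$ moment since $\Ex|V_1-\mu|^{3r}\le K$. Rosenthal then gives $\Ex|\hat\nu^2-\sigma^2|^{3r/2}\le C n^{-3r/4}$. The part $(\hat\mu-\mu)^2$ has $\Ex|\hat\mu-\mu|^{3r}\le Cn^{-3r/2}$, which is of smaller order. Raising to the power $2/3$ gives order $n^{-r/2}$ for the second factor.

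An alternative to Hölder, hinted at by the paper's "conditioning argument", is to separate the $i$-th summand from $\hat\nu^2$. The leave-one-out part is independent of $V_i$, and the contribution of $V_i$ itself is $O(\bar V_i^2/n)$. This route also needs only about $3r$ moments. I would try Hölder first since it is cleaner.

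Combining the bounds on (I) and (II) yields $C_r n^{-r/2}$, with $C_r$ depending on $r$, $K$ and $\sigma$. The paper treats $\sigma$ as fixed per coordinate and, in its application, bounded below by Assumption \ref{cond2}.
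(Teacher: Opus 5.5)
Your argument is correct, and its skeleton matches the paper's. With $S=\hat\sigma/\sigma$ and $\bar W=(\hat\mu-\mu)/\sigma$, both proofs write $\tilde V_i-\bar V_i=\bar V_i(S^{-1}-1)-\bar W/S$. Both use $S\ge 8^{-1/2}$ on $\Gamma_n$ together with $|1-S|\le|1-S^2|$, and both handle the mean term by Rosenthal's inequality.

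The difference is in the cross term $\Ex[|\bar V_i|^r|1-S^2|^r]$, where $\bar V_i$ and $S$ are dependent.
\begin{itemize}
\item The paper decouples by independence. It expands $1-S^2=\bar W^2-\frac1n(\bar V_i^2-1)-\frac1n\sum_{j\neq i}(\bar V_j^2-1)$ and splits $\bar W$ into its $i$-th summand and the leave-one-out mean. It then factors $\Ex|\bar V_i|^r$ out of the leave-one-out pieces and applies Rosenthal at orders $r$ and $2r$. The $3r$-th moment enters only through self-interaction terms such as $n^{-r}\Ex[|\bar V_i|^r|\bar V_i^2-1|^r]$ and $n^{-2r}\Ex|\bar V_i|^{3r}$.
\item You decouple by H\"older with exponents $(3,3/2)$. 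This requires Rosenthal at orders $3r/2$ and $3r$ on the full averages, and it uses the $3r$-th moment already in the leading term.
\end{itemize}
Your version is shorter and avoids the term-by-term bookkeeping. The paper's version makes explicit that the high moment is needed only for lower-order contributions. Both give $O(n^{-r/2})$ under the stated moment hypothesis.

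Your caveat that the constant depends on $\sigma$ is accurate. It reflects an imprecision in the statement, not in your proof. The left-hand side is invariant under $V\mapsto cV$, while the hypothesis $\Ex|V_1-\mu|^{3r}\le K$ is not, and the lemma in fact fails as literally written. Bernoulli$(p)$ variables satisfy $\Ex|V_1-\mu|^{3r}\le 1$ for every $p$. Yet for fixed $n\ge2$ and $p\to0$, the event that $V_i$ is the only nonzero observation lies in $\Gamma_n$. That event alone contributes order $p^{1-r/2}\ge1$ to the left-hand side, which is incompatible with $C_r n^{-r/2}$ for large $n$.

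The intended hypothesis is $\Ex|\bar V_1|^{3r}\le K$. The paper's proof tacitly uses this when it bounds $\Ex|\bar V_i|^{3r}$ by a function of $K$. It is also what Lemma \ref{lemma:infty_bound} actually supplies, via point (iv) of Assumption \ref{cond4}, which concerns the standardized $\bar X_{1k}$. It does not come from a lower bound on $\sigma_k$ in Assumption \ref{cond2}, where $\Ex\bar X_{1k}^2=1$ trivially. Under this reading your constant depends only on $r$ and $K$.
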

\begin{proof}[Proof of Lemma \ref{lemma:moment_bound}]
Define $\bar{W} = \frac{1}{n} \sum_{i=1}^{n} \bar{V}_i = \frac{\hat{\mu}-\mu}{\sigma}$. Notice that:
\begin{equation*}
    \tilde{V}_i = \frac{\bar{V}_i-\bar{W}}{S} \; ,
\end{equation*}
where $S^2 = \frac{1}{n} \sum_{i=1}^{n}(\bar{V}_i-\bar{W})^2 = \frac{\hat{\sigma}^2}{{\sigma}^2} = \frac{1}{n} \sum_{i=1}^{n}\bar{V}_i^2-\bar{W}^2$.
Then:
\begin{equation*}
    \tilde{V}_i - \bar{V}_i = \frac{\bar{V}_i(1-S) - \bar{W}}{S} \; ,
\end{equation*}
which implies by the triangle inequality and Jensen's inequality:
\begin{equation}
\label{eq:bound_r}
    |\tilde{V}_i - \bar{V}_i|^r \leq \frac{2^{r-1}}{S^r}\left(|\bar{V}_i(1-S)|^r + |\bar{W}|^r\right) \; .
\end{equation}
Then, notice that:
\begin{equation}
\label{eq:rosenthal_bound}
    \Ex [ |\bar{W}|^r] \leq \frac{1}{n^r} \left(C_r n^{r/2} \Ex \left[\bar{V}_1^2\right]^{r/2}\right) = \frac{C_r}{n^{r/2}} = O\left(\frac{1}{n^{r/2}}\right)\; ,
\end{equation}
by Rosenthal's inequality, where $C_r$ is some positive absolute constant that depends only on $r$. On the other hand, by separating the $i$-th term in the sum in $S^2$ and using the independence assumption:
\begin{eqnarray*}
    \Ex [ |\bar{V}_i(1-S)|^r] 
    & \leq & \Ex [ |\bar{V}_i|^r|1-S^2|^r] \\ 
    & \leq & 3^{r-1} \left(\Ex [ |\bar{V}_i|^r|\bar{W}^2|^r] + \Ex \left[ \left|\bar{V}_i\right|^r\left|\frac{1}{n}(\bar{V}_i^2-1)\right|^r\right] \right)\\ 
    & + & 3^{r-1}\Ex \left[ \left|\bar{V}_i\right|^r\left|\frac{1}{n}\sum_{j=1, j \neq i}^{n}(\bar{V}_j^2-1)\right|^r\right]\\ 
    & \leq & 3^{r-1} \left(2^{2r-1}\Ex \left[ |\bar{V}_i|^r\right] \Ex \left[\left|\frac{1}{n}\sum_{j=1, j \neq i}^{n} \bar{V}_j\right|^{2r}\right] + 2^{2r-1}\Ex \left[ |\bar{V}_i|^r\left|\frac{1}{n} \bar{V}_i\right|^{2r}\right]  \right)\\ 
    & + & 3^{r-1}\Ex \left[ \left|\bar{V}_i\right|^r\right]
    \Ex \left[\left|\frac{1}{n}\sum_{j=1, j \neq i}^{n}(\bar{V}_j^2-1)\right|^r\right] +  3^{r-1}\Ex \left[ \left|\bar{V}_i\right|^r\left|\frac{1}{n}(\bar{V}_i^2-1)\right|^r\right]
\end{eqnarray*}
This implies, by applying Rosenthal's inequality twice :
\begin{equation}
\label{eq:rosenthal_bound_2}
    \Ex [ |\bar{V}_i(1-S)|^r] \leq  \frac{\tilde{C}}{n^{r/2}}\; ,
\end{equation}
for some constant that can depend only on \( r\) and \( K\).
Thus:
\begin{equation*}
    \Ex \left[|\tilde{V}_i - \bar{V}_i|^r \mathds{1}_{\Gamma_n}\right]= O\left( \frac{1}{n^{r/2}}\right) \; ,
\end{equation*}
by equation \eqref{eq:bound_r}.
\end{proof}

The next lemma is concerned with the error in estimating the regression errors $\epsilon$ in equation \eqref{eq:linear_model} using the lasso estimator. This follows mainly from Theorem \ref{thm:lasso_consistency} (together with Corollary \ref{cor:corollary_eigen}), bounds on the error in estimating the intercept term $ \gamma^0$, and the hypotheses on $\rho$ in Assumption 1.

\begin{lemma}
\label{lemma:epsilon_error}
    Using the same notation as in Theorem \ref{thm:main}, define the vector $\hat{\epsilon} \in \R ^n $:
    \begin{equation*}
        \hat{\epsilon} = Y -\tilde{X}\hat{\beta} - \hat{\gamma}\mathds{1}_n \; ,
    \end{equation*}
    where $\hat{\beta}, \hat{\gamma}$ are given by the lasso optimization problem in equation \eqref{eq:lasso}.
    Then, under the assumptions of Theorem \ref{thm:lasso_consistency}:
    \begin{equation*}
        \frac{1}{n} \|\epsilon -\hat{\epsilon}\|_2^2 = O_p\left( \frac{s\log p}{n^{1-2/q}}+\frac{s_\theta}{n} + \frac{1}{n}\right) \; ,
    \end{equation*}
    for either Method 1 (equation \eqref{eq:opt1}) or Method 2 (equation \eqref{eq:opt2}). In the case of Method 1, the term with $s_\theta$ can be removed.
\end{lemma}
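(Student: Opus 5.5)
Since $\hat{\gamma} = \frac{1}{n}\mathds{1}_n^TY$ by the KKT conditions and $\mathds{1}_n^T\tilde{X}=0$, averaging the model \eqref{eq:linear_model} gives $\hat{\gamma} = \gamma^0 + \langle \hat{\mu}, \beta^0\rangle + \bar{\rho} + \bar{\epsilon}$. Here $\bar{\rho}=\frac1n\sum_i\rho_i$, $\bar{\epsilon}=\frac1n\sum_i\epsilon_i$, and $\hat\mu=(\hat\mu_1,\dots,\hat\mu_p)^T$. We also have $X\beta^0 - \mathds{1}_n\langle\hat{\mu},\beta^0\rangle = A_nX\beta^0 = \tilde{X}D_X^{1/2}\beta^0$. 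Substituting both into $\hat{\epsilon}=Y-\tilde X\hat\beta-\hat\gamma\mathds{1}_n$ yields the exact identity
\begin{equation*}
    \epsilon - \hat{\epsilon} = \tilde{X}\left(\hat{\beta}-D_X^{1/2}\beta^0\right) + (\bar{\epsilon}+\bar{\rho})\mathds{1}_n - \rho \; .
\end{equation*}
By $(a+b+c)^2\le 3(a^2+b^2+c^2)$, it suffices to bound three pieces: $\frac1n\|\tilde{X}(\hat\beta-D_X^{1/2}\beta^0)\|_2^2$, $(\bar\epsilon+\bar\rho)^2$, and $\frac1n\|\rho\|_2^2$.

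For the first piece we use Theorem \ref{thm:lasso_consistency}. Its restricted eigenvalue hypothesis holds with probability tending to one by Corollary \ref{cor:corollary_eigen}, together with Assumption \ref{cond2} and the sparsity in Assumption \ref{cond3}, exactly as in step 1 of the proof of Theorem \ref{thm:main}. The theorem is applied with $\lambda\asymp\sqrt{\log p}/n^{1/2-1/q}$, taking $\delta_\lambda=n^{1-q'/q}$ as in that proof. For Method 1 ($\rho=0$) this gives $\frac1n\|\tilde X(\hat\beta-D_X^{1/2}\beta^0)\|_2^2\le L^2\lambda^2 s = O(s\log p/n^{1-2/q})$ with probability tending to one. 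For Method 2 the bound is $L^2\max\{\lambda^2 s,\ \sigma_\rho^4s_\theta/(\lambda^2n^2\delta_\rho^2)\}$. I would keep $\delta_\rho$ a fixed small constant, so the second term is $O_p(s_\theta/(\lambda^2n^2))$. Since $\lambda^2 n\to\infty$, i.e.\ $n^{2/q}\log p\to\infty$, this is $o(s_\theta/n)$, and it is absorbed into the $s_\theta/n$ term of the statement. Letting $\delta_\rho\downarrow0$ slowly then turns this into an $O_p$ statement.

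The remaining two pieces are elementary. Under Assumption \ref{cond1}, $\Ex[\epsilon_i\epsilon_k]=0$ for $i\ne k$ by conditional centering and independence, and $\Ex\epsilon_1^2\le\sigma_\epsilon^2$. Hence $\Ex\bar\epsilon^2\le\sigma_\epsilon^2/n$, and Markov's inequality gives $\bar\epsilon^2=O_p(1/n)$. For Method 2, the normal equations \eqref{eq:normal_equations} give $\Ex\rho_1=0$, so $\Ex\bar\rho^2\le\Ex\rho_1^2/n\le\sigma_\rho^2s_\theta/n^2$. Also $\Ex\frac1n\|\rho\|_2^2\le\sigma_\rho^2s_\theta/n$, so Markov's inequality yields $\frac1n\|\rho\|_2^2=O_p(s_\theta/n)$. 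For Method 1 both $\rho$-terms vanish identically, which removes the $s_\theta$ term. Adding the three bounds gives the claim.

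The only delicate point is bookkeeping in the Method 2 lasso bound: the $\delta_\rho$ appearing in Theorem \ref{thm:lasso_consistency} must be converted into an $O_p$ statement without inflating the rate. I would resolve this as described, with fixed small $\delta_\rho$ and the inequality $1/(\lambda^2n)\to0$. Everything else is a direct combination of results already established.
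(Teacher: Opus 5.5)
Your proposal is correct and follows the paper's proof essentially step for step. Both use the same identity $\epsilon-\hat\epsilon=\tilde X(\hat\beta-D_X^{1/2}\beta^0)+(\bar\epsilon+\bar\rho)\mathds{1}_n-\rho$ obtained from $\hat\gamma=\mathds{1}_n^TY/n$, the same $3(a^2+b^2+c^2)$ split, Theorem~\ref{thm:lasso_consistency} with Corollary~\ref{cor:corollary_eigen} for the prediction term, and Markov's inequality for the $\epsilon$- and $\rho$-terms. The only deviations are bookkeeping: you bound $\bar\rho^2$ via $\Ex\rho_1=0$ rather than Cauchy--Schwarz, and for Method 2 you absorb the $\rho$-driven term of the lasso bound into $s_\theta/n$ using $\lambda^2 n\to\infty$ at fixed $\delta_\rho$, whereas the paper shows that term is dominated by $\lambda^2 s$ for a suitably vanishing $\delta_\rho$; your version avoids the paper's rate comparison between $s_\theta$ and $s$.
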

\begin{proof}[Proof of Lemma \ref{lemma:epsilon_error}]
    Note that:
    \begin{equation*}
        \tilde{X} = (X - \mathds{1}_n \hat{\mu}^T) D_X^{-1/2} \; ,
    \end{equation*}
    where $ \hat{\mu} = (\hat{\mu}_1, \dots,\hat{\mu}_p)^T \in \R^p$ is the vector of sample means $\hat{\mu}_j = \frac{1}{n} \sum_{i=1}^{n} X_{ij}$ for $ j \in [p]$. Then:
    \begin{equation*}
        X = \tilde{X} D_X^{1/2} +  \mathds{1}_n \hat{\mu}^T \; ,
    \end{equation*}
    and
    \begin{equation}
    \label{eq:linear_decomp}
        Y = \tilde{X} D_X^{1/2} \beta^0 +  \mathds{1}_n (\hat{\mu}^T\beta^0 + \gamma^0) + \epsilon + \rho \; ,
    \end{equation} 
    by equation \eqref{eq:linear_model}. Therefore,
    \begin{equation*}
        \epsilon - \hat{\epsilon}  =  \tilde{X}(\hat{\beta} - D_X^{1/2} \beta^0) + \mathds{1}_n (\hat{\gamma}- \gamma^0 - \hat{\mu}^T\beta^0) -  \rho\; ,
    \end{equation*}
    which implies:
    \begin{equation*}
        \frac{1}{n} \|\epsilon -\hat{\epsilon}  \|_2^2 \leq \frac{3}{n} \|\tilde{X}(\hat{\beta} - D_X^{1/2} \beta^0) \|_2^2 + 3(\hat{\gamma}- \gamma^0 - \hat{\mu}^T\beta^0)^2 + \frac{3\|\rho\|_2^2}{n} \; ,
    \end{equation*}
    since $\|a+b+c\|^2_2\leq 3\|a\|^2_2 + 3\|b\|^2_2 +3\|c\|^2_2$ for any $a$, $b$ and $c$. Also, by KKT conditions (see the discussion after equation \eqref{eq:lasso}), $\hat{\gamma} = \frac{\mathds{1}^{T}_{n}}{n}Y$, which implies by equation \eqref{eq:linear_decomp}:
    \begin{equation*}
        \hat{\gamma}- \gamma^0 - \hat{\mu}^T\beta^0 = \frac{1}{n} \sum_{i=1}^n (\epsilon_i + \rho_i) \; ,
    \end{equation*}
    recalling that $\mathds{1}_n^T \tilde{X} = 0$. Then, using the bound $(a+b)^2 \leq 2(a^2+b^2)$ and Cauchy-Schwarz's inequality:
    \begin{equation*}
        (\hat{\gamma}- \gamma^0 - \hat{\mu}^T\beta^0)^2 \leq 2\left(\frac{1}{n} \sum_{i=1}^n \epsilon_i \right)^2+ \frac{2}{n}\|\rho\|_2^2 \; ,
    \end{equation*}
    and, therefore:
    \begin{equation}
    \label{eq:decompose_error}
        \frac{1}{n} \|\epsilon -\hat{\epsilon}  \|_2^2 \leq \frac{3}{n} \|\tilde{X}(\hat{\beta} - D_X^{1/2} \beta^0) \|_2^2 +  6\left(\frac{1}{n} \sum_{i=1}^n \epsilon_i \right)^2 + \frac{9\|\rho\|_2^2}{n} \; ,
    \end{equation}
    From Theorem \ref{thm:lasso_consistency}, taking:
    \begin{equation*}
    \lambda = \frac{c\sqrt{\log p}}{ n^{1/2-1/q}} \; ,
    \end{equation*}
    and noticing:
    \begin{equation*}
        \frac{ \sigma^4_\rho s_\theta}{\lambda^4 n^2 \delta_\rho^2s} = O \left(\frac{ n^{2-4/q}s_\theta}{n^2 \delta_\rho^2s (\log p)^2} \right) = o\left( \frac{1}{n^{3/q}\delta^2_\rho(\log p)^{3/2}}\right) \; ,
    \end{equation*}
    which goes to zero for any $\delta_\rho \rightarrow0$ with $n^{3/q}\delta^2_\rho(\log p)^{3/2} \rightarrow \infty$, it holds that for either Method 1 or Method 2 (using Corollary \ref{cor:corollary_eigen} with equation \eqref{eq:cond_restrict} in Assumption 4):
    \begin{equation*}
    \frac{1}{n} \|  \tilde{X}(\hat{{\beta}}-D_X^{1/2}\beta^0)\|_{2}^2 =O_p(\lambda^2s)\; . 
    \end{equation*}
    By Markov's inequality and point (iv) of Assumption 1:
    \begin{equation*}
       \frac{1}{n} \|\rho\|_2^2 = O_p\left(\frac{s_\theta}{n}\right) \; ,
    \end{equation*}
    Also, by Markov's inequality and point (ii) of Assumption 1, given $\delta>0$:
    \begin{equation*}
    \Pr \left[ \left| \frac{1}{n} \sum_{i=1}^n \epsilon_i \right| \geq   \frac{\sigma_\epsilon}{n^{1/2}\delta^{1/2}}\right] \leq \delta \; ,
    \end{equation*}
    that is:
    \begin{equation*}
        \left| \frac{1}{n} \sum_{i=1}^n \epsilon_i \right| = O_p\left(\frac{1}{\sqrt n}\right) \; .
    \end{equation*}
    Combining the previous bound with equation \eqref{eq:decompose_error}, we conclude:
    \begin{equation*}
        \frac{1}{n} \|\epsilon -\hat{\epsilon}\|_2^2 = O_p\left( \frac{s\log p}{n^{1-2/q}}+\frac{s_\theta}{n} + \frac{1}{n}\right) \; .
    \end{equation*}
    The steps above make it clear that for Method 1, since $\rho = 0$ by Assumption 1, the term with $s_\theta$ can be removed.
\end{proof}

\subsection{On the relationship between Method 2 and the debiasing method of van de Geer et al}\label{appendix:vandeGeer_lemma}

We begin by stating some necessary definitions. Consider:
\begin{eqnarray}
\label{eq:debiased_lasso}
    \hat{\gamma}^{(j)} \in \argmin_{\gamma \in \mathbb{R}^{p-1}}
    \|X_{.,j} - X_{-j}\gamma \|_2^2/n + 2 \lambda_j \|\gamma\|_1\; ,
\end{eqnarray}
where $\lambda_j > 0$, $X_{.,j} \in \mathbb{R}^n$ denotes the $j$-th column of the
matrix $X$ and $X_{-j} \in \mathbb{R}^{n \times (p-1)}$ is the matrix $X$ with
its $j$-th column removed. Write $\hat{r} = X_{.,j} - X_{-j}\hat{\gamma}^{(j)}$ for the
associated residual and take
\begin{equation}
\label{eq:tau_j}
\hat{\tau}_j^2
= \frac{1}{n} \| \hat{r} \|_2^2
+ \lambda_j \|\hat{\gamma}^{(j)}\|_1 \;,
\end{equation}
which we assume to be positive, and
\begin{equation*}
    \hat{C}_j
    = \left(
    -\hat{\gamma}^{(j)}_1, \ldots, \hat{\gamma}^{(j)}_{j-1},\, 1, \,
    -\hat{\gamma}^{(j)}_{j+1}, \ldots, -\hat{\gamma}^{(j)}_{p}
    \right)^T \in \R^p .
\end{equation*}
Finally define:
\begin{equation}
\label{eq:nodewise_lasso}
    m^{(j)}_{ND} = \frac{1}{\hat{\tau}_j^2}\, \hat{C}_j .
\end{equation}

\begin{lemma}
\label{lemma:nodewise_lasso}
Consider the following optimization problem, for $j \in [p]$:
\begin{eqnarray}
\label{eq:nd_lemma}
m^{(j)} \in \argmin_{m \in \mathbb{R}^p} \quad
\frac{1}{2}m^T {\hat{\Sigma}_n} m - e_j^T m
+ \tilde{\lambda}_j \sum_{i=1 , i \neq j}^p |m_i| \; ,
\end{eqnarray}
with ${\hat{\Sigma}_n} = \frac{1}{n} X^TX$ and $\tilde{\lambda}_j = \lambda_j/\hat{\tau}_j^2$,
where $\lambda_j$ is as in \eqref{eq:debiased_lasso} and $\hat{\tau}_j^2$ is
given in \eqref{eq:tau_j}. Then $m^{(j)}_{ND}$ is a minimizer of
\eqref{eq:nd_lemma}.
\end{lemma}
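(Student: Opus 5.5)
Since problem \eqref{eq:nd_lemma} is convex in $m$ (a positive semidefinite quadratic plus a weighted $\ell^1$ penalty), it suffices to check that $m^{(j)}_{ND}$ satisfies the KKT (subgradient) conditions. These conditions read
\begin{equation*}
\hat{\Sigma}_n m - e_j + \tilde{\lambda}_j \hat{h} = 0 ,
\end{equation*}
where $\hat{h}_j = 0$, and for $i \neq j$ we need $\hat{h}_i \in \partial|m_i|$, that is, $\hat h_i = \mathrm{sign}(m_i)$ if $m_i \neq 0$ and $\hat h_i\in[-1,1]$ otherwise. The plan is to produce such an $\hat h$ from the KKT conditions of the nodewise lasso \eqref{eq:debiased_lasso}, together with one scalar identity coming from the definition of $\hat{\tau}_j^2$.

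\textbf{Step 1 (nodewise KKT).} The nodewise lasso \eqref{eq:debiased_lasso} gives
\begin{equation*}
-\tfrac{1}{n}X_{-j}^T\hat{r} + \lambda_j \hat{\kappa} = 0 ,
\end{equation*}
with $\hat{\kappa}$ a subgradient of $\|\cdot\|_1$ at $\hat{\gamma}^{(j)}$. Pairing this with $\hat{\gamma}^{(j)}$ yields $\tfrac1n (X_{-j}\hat\gamma^{(j)})^T\hat r = \lambda_j\|\hat\gamma^{(j)}\|_1$. Since $X\hat{C}_j = \hat{r}$ (with the sign convention $\hat{C}_j$ having entries $-\hat{\gamma}^{(j)}$ off coordinate $j$; I read the first displayed entries with this consistent sign), we get
\begin{equation*}
\tfrac1n X_{.,j}^T\hat r = \tfrac1n\|\hat r\|_2^2 + \tfrac1n(X_{-j}\hat\gamma^{(j)})^T\hat r = \tfrac1n\|\hat r\|_2^2+\lambda_j\|\hat\gamma^{(j)}\|_1 = \hat\tau_j^2 .
\end{equation*}

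\textbf{Step 2 (verify KKT for $m = \hat C_j/\hat\tau_j^2$).} We have $\hat\Sigma_n m = \tfrac{1}{n\hat\tau_j^2}X^T\hat r$. The $j$-th coordinate equals $\tfrac{1}{\hat\tau_j^2}\hat\tau_j^2=1$, so the $j$-th KKT equation holds with $\hat h_j=0$. For coordinates $i\neq j$, Step 1 gives $(\hat\Sigma_n m)_{-j} = \tfrac{\lambda_j}{\hat\tau_j^2}\hat\kappa = \tilde\lambda_j\hat\kappa$. We therefore need $\hat h_{-j}=-\hat\kappa$ to be a subgradient at $m_{-j} = -\hat\gamma^{(j)}/\hat\tau_j^2$. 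This holds because $\hat\tau_j^2>0$ and the $\ell^1$ subdifferential satisfies $\partial\|{-c x}\|_1 = -\partial\|x\|_1$ for $c>0$. Hence the full KKT system holds, and by convexity $m^{(j)}_{ND}$ is a minimizer.

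\textbf{Main obstacle.} The only nontrivial point is the scalar identity $\tfrac1nX_{.,j}^T\hat r=\hat\tau_j^2$ in Step 1. It relies on the inner-product trick with the KKT condition, $\hat\gamma^{(j)T}\hat\kappa=\|\hat\gamma^{(j)}\|_1$, and is exactly what fixes the normalization by $\hat\tau_j^2$ and the choice $\tilde\lambda_j=\lambda_j/\hat\tau_j^2$. The remaining work is sign bookkeeping in $\hat C_j$.
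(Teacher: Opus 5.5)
Your proposal is correct and follows essentially the same route as the paper. Both check the stationarity conditions of the convex problem at $\hat C_j/\hat\tau_j^2$ using the nodewise-lasso KKT condition, the identity $\tfrac1n X_{.,j}^T\hat r=\hat\tau_j^2$ obtained from $\hat g^T\hat\gamma^{(j)}=\|\hat\gamma^{(j)}\|_1$, and the sign flip $g=-\hat g$ for the off-$j$ subgradient; your reading of $\hat C_j$ as having all off-$j$ entries equal to $-\hat\gamma^{(j)}$ is exactly what the paper's proof uses.
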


\begin{proof}[Proof of Lemma \ref{lemma:nodewise_lasso}]
Throughout, for $m \in \mathbb{R}^p$ we write $m_{-j} \in \mathbb{R}^{p-1}$ for
the vector $m$ with its $j$-th coordinate removed, so that
$Xm = X_{.,j} m_j + X_{-j}m_{-j}$. Since the objective in \eqref{eq:nd_lemma} is convex, $m$ is a minimizer if
and only if the following stationarity conditions hold:
\begin{align}
\label{eq:opt_conds}
    &\frac{1}{n}X_{.,j}^T\big(X_{.,j} m_j + X_{-j} m_{-j}\big) - 1 = 0 \; ,\\
\label{eq:opt_conds_2}
    &\frac{1}{n}X_{-j}^T\big(X_{.,j} m_j + X_{-j} m_{-j}\big)
    + \tilde{\lambda}_j\, g = 0
    \quad\text{for some } g \in \partial\|\cdot\|_1(m_{-j})\; .
\end{align} 
We show that $m^{(j)}_{ND}$ satisfies these conditions. 
The optimality condition for \eqref{eq:debiased_lasso} gives
\begin{equation}
\label{eq:opt_nodewise}
    \frac{1}{n}X^T_{-j}\hat{r} = \lambda_j\, \hat{g}
    \quad\text{for some } \hat{g} \in \partial\|\cdot\|_1(\hat{\gamma}^{(j)})\; .
\end{equation}
By definition, $(m^{(j)}_{ND})_j = 1/\hat{\tau}_j^2$,
$(m^{(j)}_{ND})_{-j} = -\hat{\gamma}^{(j)}/\hat{\tau}_j^2$, and therefore
\begin{equation}
\label{eq:fit_nd}
    X m^{(j)}_{ND}
    = \frac{1}{\hat{\tau}_j^2}\big(X_{.,j} - X_{-j}\hat{\gamma}^{(j)}\big)
    = \frac{\hat{r}}{\hat{\tau}_j^2}\; .
\end{equation}
Then, decomposing
$X_{.,j} = \hat{r} + X_{-j}\hat{\gamma}^{(j)}$ and using \eqref{eq:opt_nodewise}
together with the fact that $\hat{g}^T\hat{\gamma}^{(j)} = \|\hat{\gamma}^{(j)}\|_1$:
\begin{equation*}
    \frac{1}{n}X_{.,j}^T \hat{r}
    = \frac{1}{n}\|\hat{r}\|_2^2
    + \big(\hat{\gamma}^{(j)}\big)^T \frac{1}{n}X_{-j}^T\hat{r}
    = \frac{1}{n}\|\hat{r}\|_2^2
    + \lambda_j \|\hat{\gamma}^{(j)}\|_1
    = \hat{\tau}_j^2 \; .
\end{equation*}
This gives, by \eqref{eq:fit_nd},
$\frac{1}{n}X_{.,j}^T X m^{(j)}_{ND} = \hat{\tau}_j^2/\hat{\tau}_j^2 = 1$, which is
\eqref{eq:opt_conds}. 
For condition \eqref{eq:opt_conds_2}, equations
\eqref{eq:fit_nd} and \eqref{eq:opt_nodewise} give
\begin{equation*}
    \frac{1}{n}X_{-j}^T X m^{(j)}_{ND}
    = \frac{1}{\hat{\tau}_j^2}\cdot\frac{1}{n}X_{-j}^T\hat{r}
    = \frac{\lambda_j}{\hat{\tau}_j^2}\,\hat{g}
    = \tilde{\lambda}_j\, \hat{g}\; ,
\end{equation*}
so \eqref{eq:opt_conds_2} holds with $g = -\hat{g}$, provided $-\hat{g}$ is a
valid subgradient of $\|\cdot\|_1$ at
$(m^{(j)}_{ND})_{-j} = -\hat{\gamma}^{(j)}/\hat{\tau}_j^2$. This is the case:
since $\hat{\tau}_j^2 > 0$, the coordinates of $(m^{(j)}_{ND})_{-j}$ have signs
opposite to those of $\hat{\gamma}^{(j)}$, so on
$\{i : \hat{\gamma}^{(j)}_i \neq 0\}$ we have
$-\hat{g}_i = -\mathrm{sign}(\hat{\gamma}^{(j)}_i)
= \mathrm{sign}\big((m^{(j)}_{ND})_{-j,i}\big)$, while on the remaining
coordinates $(m^{(j)}_{ND})_{-j,i} = 0$ and $-\hat{g}_i \in [-1,1]$. This concludes the proof.
\end{proof}

\end{appendix}

\section*{Acknowledgments}
Leonardo Voltarelli was supported by a scholarship from the PROEX program by Coordenação de Aperfeiçoamento de Pessoal de Nível Superior (CAPES). Roberto Imbuzeiro Oliveira was supported by a {\em Bolsa de Produtividade em Pesquisa} from CNPq, Brazil (\# 305765/2023-0) and a {\em Cientista do Nosso Estado} grant from Faperj, Rio de Janeiro, Brazil (\# E-26/200.485/2023). 

\bibliographystyle{unsrt}  
\bibliography{references}  

@article{JMLR:v15:javanmard14a,
  author  = {Adel Javanmard and Andrea Montanari},
  title   = {Confidence Intervals and Hypothesis Testing for High-Dimensional Regression},
  journal = {Journal of Machine Learning Research},
  year    = {2014},
  volume  = {15},
  number  = {82},
  pages   = {2869--2909},
  url     = {http://jmlr.org/papers/v15/javanmard14a.html}
}

@ARTICLE{Peng2010-lw,
  title    = "Regularized Multivariate Regression for Identifying Master
              Predictors with Application to Integrative Genomics Study of
              Breast Cancer",
  author   = "Peng, Jie and Zhu, Ji and Bergamaschi, Anna and Han, Wonshik and
              Noh, Dong-Young and Pollack, Jonathan R and Wang, Pei",
  journal  = "Ann Appl Stat",
  volume   =  4,
  number   =  1,
  pages    = "53--77",
  month    =  {March},
  year     =  2010,
  address  = "United States",
  language = "en"
}

@BOOK{Pena2009-av,
  title     = "Self-normalized processes",
  author    = "Pena, Victor H and Lai, T L and Shao, Qi-Man",
  publisher = "Springer",
  series    = "Probability and Its Applications (New York)",
  month     =  {December},
  year      =  2009,
  address   = "Berlin, Germany",
  language  = "en"
}

@article{buhlmann2017high,
  author  = {B{\"u}hlmann, Peter},
  title   = {High-dimensional statistics, with applications to genome-wide association studies},
  journal = {EMS Surv. Math. Sci.},
  volume  = {4},
  number  = {1},
  pages   = {45--75},
  year    = {2017}
}

@ARTICLE{Melsen2025-fg,
  title    = "Improving Genomic Prediction Using {High-Dimensional} Secondary
              Phenotypes: The Genetic Latent Factor Approach",
  author   = "Melsen, Killian A C and Kunst, Jonathan F and Crossa, Jos{\'e}
              and Krause, Margaret R and van Eeuwijk, Fred A and Kruijer,
              Willem and Peeters, Carel F W",
  journal  = "Biom J",
  volume   =  67,
  number   =  5,
  pages    = "e70081",
  month    =  {October},
  year     =  2025,
  address  = "Germany",
  language = "en"
}

@article{10.1111/j.2517-6161.1996.tb02080.x,
    author = {Tibshirani, Robert},
    title = {Regression Shrinkage and Selection Via the Lasso},
    journal = {Journal of the Royal Statistical Society: Series B (Methodological)},
    volume = {58},
    number = {1},
    pages = {267-288},
    year = {2018},
    month = {12},
    issn = {0035-9246},
    doi = {10.1111/j.2517-6161.1996.tb02080.x},
    url = {https://doi.org/10.1111/j.2517-6161.1996.tb02080.x},
    eprint = {https://academic.oup.com/jrsssb/article-pdf/58/1/267/49098631/jrsssb_58_1_267.pdf},
}

@article{https://doi.org/10.3982/ECTA9626,
author = {Belloni, A. and Chen, D. and Chernozhukov, V. and Hansen, C.},
title = {Sparse Models and Methods for Optimal Instruments With an Application to Eminent Domain},
journal = {Econometrica},
volume = {80},
number = {6},
pages = {2369-2429},
doi = {https://doi.org/10.3982/ECTA9626},
url = {https://onlinelibrary.wiley.com/doi/abs/10.3982/ECTA9626},
eprint = {https://onlinelibrary.wiley.com/doi/pdf/10.3982/ECTA9626},
year = {2012}
}

@article{GENUER20102225,
title = {Variable selection using random forests},
journal = {Pattern Recognition Letters},
volume = {31},
number = {14},
pages = {2225-2236},
year = {2010},
issn = {0167-8655},
doi = {https://doi.org/10.1016/j.patrec.2010.03.014},
url = {https://www.sciencedirect.com/science/article/pii/S0167865510000954},
author = {Robin Genuer and Jean-Michel Poggi and Christine Tuleau-Malot}
}

@article{Clemmensen01112011,
author = {Line Clemmensen and Trevor Hastie and Daniela Witten and Bjarne Ersbøll},
title = {Sparse Discriminant Analysis},
journal = {Technometrics},
volume = {53},
number = {4},
pages = {406--413},
year = {2011},
publisher = {Taylor \& Francis},
doi = {10.1198/TECH.2011.08118},
URL = {https://doi.org/10.1198/TECH.2011.08118},
eprint = {https://doi.org/10.1198/TECH.2011.08118}
}

@article{10.1257/jep.28.2.29,
Author = {Belloni, Alexandre and Chernozhukov, Victor and Hansen, Christian},
Title = {High-Dimensional Methods and Inference on Structural and Treatment Effects},
Journal = {Journal of Economic Perspectives},
Volume = {28},
Number = {2},
Year = {2014},
Month = {May},
Pages = {29–50},
DOI = {10.1257/jep.28.2.29},
URL = {https://www.aeaweb.org/articles?id=10.1257/jep.28.2.29}
}

@ARTICLE{Chen2022-hu,
  title    = "Handling high-dimensional data with missing values by modern
              machine learning techniques",
  author   = "Chen, Sixia and Xu, Chao",
  journal  = "J Appl Stat",
  volume   =  50,
  number   =  3,
  pages    = "786--804",
  month    =  {May},
  year     =  2022,
  address  = "England",
  language = "en"
}

@ARTICLE{Fan2021-dy,
  title    = "Robust high dimensional factor models with applications to
              statistical machine learning",
  author   = "Fan, Jianqing and Wang, Kaizheng and Zhong, Yiqiao and Zhu, Ziwei",
  journal  = "Stat Sci",
  volume   =  36,
  number   =  2,
  pages    = "303--327",
  month    =  {April},
  year     =  2021,
  address  = "United States",
  language = "en"
}

@article{10.1111/rssb.12026,
    author = {Zhang, Cun-Hui and Zhang, Stephanie S.},
    title = {Confidence Intervals for Low Dimensional Parameters in High Dimensional Linear Models},
    journal = {Journal of the Royal Statistical Society Series B: Statistical Methodology},
    volume = {76},
    number = {1},
    pages = {217-242},
    year = {2013},
    month = {07},
    issn = {1369-7412},
    doi = {10.1111/rssb.12026},
    url = {https://doi.org/10.1111/rssb.12026},
    eprint = {https://academic.oup.com/jrsssb/article-pdf/76/1/217/49514350/jrsssb_76_1_217.pdf},
}

@article{f29b4853-f902-378f-99ba-4ec108f6737e,
 ISSN = {00905364},
 URL = {http://www.jstor.org/stable/43556319},
 author = {Sara van de Geer and Peter Bühlmann and Ya'acov Ritov and Ruben Dezeure},
 journal = {The Annals of Statistics},
 number = {3},
 pages = {1166--1202},
 publisher = {Institute of Mathematical Statistics},
 title = {On asymptotically optimal confidence regions and tests for high-dimensional models},
 urldate = {2026-04-07},
 volume = {42},
 year = {2014}
}

@article{10.1214/20-EJS1713,
author = {Sai Li},
title = {{Debiasing the debiased Lasso with bootstrap}},
volume = {14},
journal = {Electronic Journal of Statistics},
number = {1},
publisher = {Institute of Mathematical Statistics and Bernoulli Society},
pages = {2298 -- 2337},
year = {2020},
doi = {10.1214/20-EJS1713},
URL = {https://doi.org/10.1214/20-EJS1713}
}

@article{10.5555/3648699.3648995,
author = {Ouyang, Jing and Tan, Keane Ming and Xu, Gongjun},
title = {High-dimensional inference for generalized linear models with hidden confounding},
year = {2023},
issue_date = {January 2023},
publisher = {JMLR.org},
volume = {24},
number = {1},
issn = {1532-4435},
journal = {J. Mach. Learn. Res.},
month = {January},
articleno = {296},
numpages = {61}
}

@article{10.1111/biom.13587,
    author = {Xia, Lu and Nan, Bin and Li, Yi},
    title = {Debiased Lasso for Generalized Linear Models with a Diverging Number of Covariates},
    journal = {Biometrics},
    volume = {79},
    number = {1},
    pages = {344-357},
    year = {2021},
    month = {10},
    issn = {0006-341X},
    doi = {10.1111/biom.13587},
    url = {https://doi.org/10.1111/biom.13587},
    eprint = {https://academic.oup.com/biometrics/article-pdf/79/1/344/54621490/biometrics_79_1_344.pdf},
}

@article{10.3150/21-BEJ1348,
author = {Pierre C. Bellec and Cun-Hui Zhang},
title = {{De-biasing the lasso with degrees-of-freedom adjustment}},
volume = {28},
journal = {Bernoulli},
number = {2},
publisher = {Bernoulli Society for Mathematical Statistics and Probability},
pages = {713 -- 743},
year = {2022},
doi = {10.3150/21-BEJ1348},
URL = {https://doi.org/10.3150/21-BEJ1348}
}

@article{10.1214/16-AOS1448,
author = {Yang Ning and Han Liu},
title = {{A general theory of hypothesis tests and confidence regions for sparse high dimensional models}},
volume = {45},
journal = {The Annals of Statistics},
number = {1},
publisher = {Institute of Mathematical Statistics},
pages = {158 -- 195},
year = {2017},
doi = {10.1214/16-AOS1448},
URL = {https://doi.org/10.1214/16-AOS1448}
}

@article{10.1214/17-AOS1630,
author = {Adel Javanmard and Andrea Montanari},
title = {{Debiasing the lasso: Optimal sample size for Gaussian designs}},
volume = {46},
journal = {The Annals of Statistics},
number = {6A},
publisher = {Institute of Mathematical Statistics},
pages = {2593 -- 2622},
year = {2018},
doi = {10.1214/17-AOS1630},
URL = {https://doi.org/10.1214/17-AOS1630}
}

@ARTICLE{Meng2025-pm,
  title    = "Statistical inference on high-dimensional covariate-dependent
              Gaussian graphical regressions",
  author   = "Meng, Xuran and Zhang, Jingfei and Li, Yi",
  journal  = "Biometrics",
  volume   =  81,
  number   =  4,
  month    =  {October},
  year     =  2025,
  address  = "England",
  language = "en"
}

@article{10.1214/15-EJS1031,
author = {Jana Jankov{\'a} and Sara van de Geer},
title = {{Confidence intervals for high-dimensional inverse covariance estimation}},
volume = {9},
journal = {Electronic Journal of Statistics},
number = {1},
publisher = {Institute of Mathematical Statistics and Bernoulli Society},
pages = {1205 -- 1229},
year = {2015},
doi = {10.1214/15-EJS1031},
URL = {https://doi.org/10.1214/15-EJS1031}
}

@article{10.1214/08-AOS620,
author = {Peter J. Bickel and Ya’acov Ritov and Alexandre B. Tsybakov},
title = {{Simultaneous analysis of Lasso and Dantzig selector}},
volume = {37},
journal = {The Annals of Statistics},
number = {4},
publisher = {Institute of Mathematical Statistics},
pages = {1705 -- 1732},
year = {2009},
doi = {10.1214/08-AOS620},
URL = {https://doi.org/10.1214/08-AOS620}
}

@article{10.1214/16-AOS1461,
author = {T. Tony Cai and Zijian Guo},
title = {{Confidence intervals for high-dimensional linear regression: Minimax rates and adaptivity}},
volume = {45},
journal = {The Annals of Statistics},
number = {2},
publisher = {Institute of Mathematical Statistics},
pages = {615 -- 646},
year = {2017},
doi = {10.1214/16-AOS1461},
URL = {https://doi.org/10.1214/16-AOS1461}
}

@article{KOCK_2024110149,
title = {A remark on moment-dependent phase transitions in high-dimensional Gaussian approximations},
journal = {Statistics \& Probability Letters},
volume = {211},
pages = {110149},
year = {2024},
issn = {0167-7152},
doi = {https://doi.org/10.1016/j.spl.2024.110149},
url = {https://www.sciencedirect.com/science/article/pii/S0167715224001184},
author = {Anders Bredahl Kock and David Preinerstorfer}
}

@article{10.1093/restud/rdt044,
    author = {Belloni, Alexandre and Chernozhukov, Victor and Hansen, Christian},
    title = {Inference on Treatment Effects after Selection among High-Dimensional Controls†},
    journal = {The Review of Economic Studies},
    volume = {81},
    number = {2},
    pages = {608-650},
    year = {2014},
    month = {04},
    issn = {0034-6527},
    doi = {10.1093/restud/rdt044},
    url = {https://doi.org/10.1093/restud/rdt044},
    eprint = {https://academic.oup.com/restud/article-pdf/81/2/608/18394034/rdt044.pdf},
}

@InProceedings{pmlr-v23-rudelson12,
  title = 	 {Reconstruction from Anisotropic Random Measurements},
  author = 	 {Rudelson, Mark and Zhou, Shuheng},
  booktitle = 	 {Proceedings of the 25th Annual Conference on Learning Theory},
  pages = 	 {10.1--10.24},
  year = 	 {2012},
  editor = 	 {Mannor, Shie and Srebro, Nathan and Williamson, Robert C.},
  volume = 	 {23},
  series = 	 {Proceedings of Machine Learning Research},
  address = 	 {Edinburgh, Scotland},
  month = 	 {25--27 Jun},
  publisher =    {PMLR},
  url = 	 {https://proceedings.mlr.press/v23/rudelson12.html}
}

@article{10.1109/TIT.2010.2048506,
author = {Cai, Tony Tony and Wang, Lie and Xu, Guangwu},
title = {Stable recovery of sparse signals and an oracle inequality},
year = {2010},
issue_date = {July 2010},
publisher = {IEEE Press},
volume = {56},
number = {7},
issn = {0018-9448},
url = {https://doi.org/10.1109/TIT.2010.2048506},
doi = {10.1109/TIT.2010.2048506},
journal = {IEEE Trans. Inf. Theor.},
month = {July},
pages = {3516–3522},
numpages = {7}
}

@article{10.1109/18.959265,
author = {Donoho, D. L. and Huo, X.},
title = {Uncertainty principles and ideal atomic decomposition},
year = {2006},
issue_date = {November 2001},
publisher = {IEEE Press},
volume = {47},
number = {7},
issn = {0018-9448},
url = {https://doi.org/10.1109/18.959265},
doi = {10.1109/18.959265},
journal = {IEEE Trans. Inf. Theor.},
month = {September},
pages = {2845–2862},
numpages = {18}
}

@article{10.1214/009053606000001523,
author = {Emmanuel Candes and Terence Tao},
title = {{The Dantzig selector: Statistical estimation when p is much larger than n}},
volume = {35},
journal = {The Annals of Statistics},
number = {6},
publisher = {Institute of Mathematical Statistics},
pages = {2313 -- 2351},
year = {2007},
doi = {10.1214/009053606000001523},
URL = {https://doi.org/10.1214/009053606000001523}
}

@article{10.1109/TIT.2006.885507,
author = {Candes, E. J. and Tao, T.},
title = {Near-Optimal Signal Recovery From Random Projections: Universal Encoding Strategies?},
year = {2006},
issue_date = {December 2006},
publisher = {IEEE Press},
volume = {52},
number = {12},
issn = {0018-9448},
url = {https://doi.org/10.1109/TIT.2006.885507},
doi = {10.1109/TIT.2006.885507},
journal = {IEEE Trans. Inf. Theor.},
month = {December},
pages = {5406–5425},
numpages = {20}
}

@article{10.1214/09-EJS506,
author = {Sara A. van de Geer and Peter B{\"u}hlmann},
title = {{On the conditions used to prove oracle results for the Lasso}},
volume = {3},
journal = {Electronic Journal of Statistics},
number = {none},
publisher = {Institute of Mathematical Statistics and Bernoulli Society},
pages = {1360 -- 1392},
year = {2009},
doi = {10.1214/09-EJS506},
URL = {https://doi.org/10.1214/09-EJS506}
}

@ARTICLE{Oliveira2023-it,
  title     = "Sample average approximation with heavier tails {II}:
               localization in stochastic convex optimization and persistence
               results for the Lasso",
  author    = "Oliveira, Roberto I and Thompson, Philip",
  journal   = "Math. Program.",
  publisher = "Springer Science and Business Media LLC",
  volume    =  199,
  number    = "1-2",
  pages     = "49--86",
  month     =  {May},
  year      =  2023,
  copyright = "https://www.springernature.com/gp/researchers/text-and-data-mining",
  language  = "en"
}

@article{Zhou2009RestrictedEC,
  title={Restricted Eigenvalue Conditions on Subgaussian Random Matrices},
  author={Shuheng Zhou},
  journal={arXiv: Statistics Theory},
  year={2009},
  url={https://api.semanticscholar.org/CorpusID:88512371}
}

@book{buhlmann2011statistics,
  title={Statistics for high-dimensional data: Methods, theory and applications},
  author={B{\"u}hlmann, Peter and van de Geer, Sara},
  year={2011},
  publisher={Springer Science \& Business Media}
}

@book{10.1093/acprof:oso/9780199535255.001.0001,
    author = {Boucheron, Stéphane and Lugosi, Gábor and Massart, Pascal},
    title = {Concentration Inequalities: A Nonasymptotic Theory of Independence},
    publisher = {Oxford University Press},
    year = {2013},
    month = {02},
    isbn = {9780199535255},
    doi = {10.1093/acprof:oso/9780199535255.001.0001},
    url = {https://doi.org/10.1093/acprof:oso/9780199535255.001.0001},
}

@Article{Oliveira2016,
author={Oliveira, Roberto Imbuzeiro},
title={The lower tail of random quadratic forms with applications to ordinary least squares},
journal={Probability Theory and Related Fields},
year={2016},
month={December},
day={01},
volume={166},
number={3},
pages={1175-1194},
issn={1432-2064},
doi={10.1007/s00440-016-0738-9},
url={https://doi.org/10.1007/s00440-016-0738-9}
}

\end{document}